\def\Z{\mathbb{Z}}
\def\sphere{\mathbb{S}}
\def\disk{\mathbb{D}}
\def\qdl{\mathrm{Q}}
\def\rck{\mathrm{R}}
\def\dgn{\mathrm{D}}
\def\im{\mathrm{im}\,}
\def\sh{{\mathrm{sh}\,}}
\def\ini{\mathrm{ini}}
\def\ter{\mathrm{ter}}
\def\blhd{\blacktriangleleft}
\def\lhd{\vartriangleleft}
\newtheorem{theorem}{Theorem}[section]
\newtheorem{lemma}[theorem]{Lemma}
\newtheorem{corollary}[theorem]{Corollary}
\theoremstyle{definition}
\newtheorem{remark}{Remark}[section]
\theoremstyle{definition}
\newtheorem{example}{Example}[section]
\begin{document}

\title{A Diagrammatic Construction of \linebreak[4]
Third Homology Classes of Knot Quandles}
\author{Yasto Kimura}
\date{}

\maketitle

\begin{abstract}
We construct elements of the third quandle homology groups 
of knot quandles, which are called the shadow fundamental classes. 
They play the same roles for the shadow quandle cocycle 
invariants of knots as the fundamental classes of knot 
quandles does for the quandle cocycle invariants. 
As an application of the shadow fundamental classes, 
we show the relation between 
the shadow quandle cocycle invariants and 
the based shadow quandle cocycle invariants. 

Moreover,
we will show, for a prime knot, 
that any third quandle homology classes 
are considered as images of the shadow fundamental classes 
of some links. 
\end{abstract}

\section{Introduction.}
\label{CHAP:introduction}

Knot quandles, introduced by Joyce [J], 
are algebraic concepts, which are related to 
the peripheral systems of knots. 
Their (co)homology theories are 
defined in [CJKLS], from which we can obtain 
many invariants of classical knots and 
of surface knots. 
The cocycle invariants in that paper and the shadow cocycle 
invariants in [CKS1] are examples of those invariants. 

Eisermann [E] determined the second (co)homology groups 
of knot quandles.
Furthermore, as a consequence of his result, 
we have homological interpretation of 
the cocycle invariants. 
Our purpose comes directly from these results, that is, 
we will try to determine the third (co)homology groups 
of knot quandles, and to give a homological explanation 
to the shadow cocycle invariants.\\

The arcs of a diagram of a knot \(K\) 
can be considered as generators of the knot quandle 
\(Q(K)\) of \(K\). 
Thus we can regard the diagram as coloured by \(Q(K)\). 
Since any coloured diagram on \(\sphere^2\) is 
shadow colourable, 
the knot diagram has a \(Q(K)\)-shadow colouring, 
which gives a third homology class of \(Q(K)\). 
We call such a homology class a shadow diagram class 
as an element of the rack homology group of \(Q(K)\), 
and also call it a shadow fundamental class 
as that of the quandle homology group. 
As for these homology classes, 
we have: 

\begin{theorem}[Theorem \ref{TH:construction theorem}]
\label{TH:main theorem a}
Let \(L\) be a non-trivial \(n\)-component link. 
\begin{itemize}
\item[a)] The shadow diagram classes are non-zero 
elements of the third rack homology group \(H^\rck_3(Q(L); \Z)\) 
for all diagrams of \(L\). 
\item[b)] There exist \(n\) shadow fundamental classes of \(L\) 
in the third quandle homology group \(H_3^\qdl(Q(L); \Z)\). 
\item[c)] The third quandle homology group 
\(H_3^\qdl(Q(L); \Z)\) splits into the direct sum 
\begin{quote}
\(\bigl(\bigoplus \Z [L_i]\bigr) 
\oplus \Bigl(H_3^\qdl(Q(L); \Z)/\bigl(\bigoplus \Z [L_i]\bigr)\Bigr)\), 
\end{quote}
where \([L_1]\), \ldots, \([L_n]\) are 
distinct shadow fundamental classes of \(L\). 
\end{itemize}
\end{theorem}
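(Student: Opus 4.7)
The plan is to treat the three parts in order, relying on cocycle pairings as the principal detection tool. The diagram \(D\) of \(L\) induces a shadow \(Q(L)\)-coloring (each arc is coloured by its own meridian generator, with the regions coloured compatibly), and the associated 3-chain \(\sum_c \varepsilon(c)\,(x_c,y_c,r_c)\) summed over crossings \(c\) is a cycle in the rack complex defining the shadow diagram class in \(H_3^\rck(Q(L);\Z)\).

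For part (a), I would exhibit a shadow 3-cocycle \(\theta\) of \(Q(L)\) whose evaluation on \([D]_\rck\) is non-zero whenever \(L\) is non-trivial. A natural candidate is a shadow extension of an Eisermann-type quandle 2-cocycle that detects non-triviality via linking/peripheral data; the non-trivial pairing follows because any diagram of a non-trivial link must contain crossings contributing non-trivially to such a cocycle, by the classification of \(H_2\) of knot quandles.

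For part (b), a shadow coloring on \(\sphere^2\) is determined by the arc colors together with the color of a single base region. Choosing that region color to be the meridian \(m_i\) of the \(i\)-th component yields a 3-cycle whose class in \(H_3^\qdl(Q(L);\Z)\) is the shadow fundamental class \([L_i]\). This gives \(n\) candidate classes, and their pairwise distinctness is subsumed by part (c).

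For part (c), the splitting reduces to producing \(n\) quandle 3-cocycles \(\varphi_1,\ldots,\varphi_n\) such that the matrix \(M_{ij}=\langle\varphi_i,[L_j]\rangle\) is unimodular over \(\Z\); once this is done, the assignment \(x\mapsto\sum_{i,j}(M^{-1})_{ji}\langle\varphi_i,x\rangle\,[L_j]\) defines a retraction of \(H_3^\qdl(Q(L);\Z)\) onto \(\bigoplus\Z[L_i]\), yielding the desired direct sum decomposition. The main obstacle will be the construction of these cocycles: extending component-labelled linking-type 2-cocycles to shadow 3-cocycles ought to furnish candidates, but verifying that the resulting integer pairing matrix is unimodular—not merely invertible over \(\mathbb Q\)—is the delicate point on which the \emph{integral} splitting depends.
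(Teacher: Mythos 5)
Your overall strategy---detect the shadow classes by pairing against cocycles---matches only half of the paper's toolkit, and the half you lean on hardest is exactly where your plan fails. For part (c) you need \(n\) integral quandle \(3\)-cocycles whose pairing matrix with the \([L_j]\) is unimodular, and you offer no construction; this is a genuine gap, not a technicality. The natural candidates (take a \(2\)-cocycle \(\phi\) of \(Q(L)\) and set \(\tilde\phi_\beta(\alpha,a,b)=\phi(a,b)\) when \(\alpha\) lies in the orbit of \(\beta\), and \(0\) otherwise---this is what the paper actually uses to prove linear independence) give a diagonal pairing matrix with entries \(\langle[D],[\phi]\rangle\), which Eisermann's theorem guarantees to be non-zero but by no means \(\pm1\); so your retraction formula only works over \(\mathbb{Q}\) and cannot deliver the \emph{integral} splitting. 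The paper avoids unimodularity altogether: the retraction is built structurally from the shifting homomorphism \(\sigma_\ast\colon H^\rck_3(Q(L))\to H^\rck_2(Q(L))\) (forget the shadow colours), since \(\sigma_3\langle D_\alpha\rangle=\langle D\rangle\) and hence \(\rho_\ast\sigma_\ast[D_\alpha]=[L]\); the composite \(\Z[L]\cong\Z[L_\alpha]\hookrightarrow H^\rck_3(Q(L))\xrightarrow{\rho_\ast\sigma_\ast}\Z[L]\) is the identity, an honest integral retraction, and together with the Litherland--Nelson splitting (Theorem \ref{TH:homology splitting}) this yields (c). The same observation settles (a) immediately---\([D_\alpha]\neq0\) because it maps onto the non-zero fundamental class---whereas your argument for (a) (``any diagram of a non-trivial link must contain crossings contributing non-trivially to such a cocycle'') is an assertion, not a proof.

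In (b) you also skip the well-definedness issues on which the paper spends most of its effort: that \([L_\alpha]\) is independent of the chosen diagram (Reidemeister deformations, where R-I preserves only the quandle class, Lemmas \ref{LEM:R-II R-III deformation} and \ref{LEM:R-I deformation}) and independent of the base-region colour within a connectedness orbit (Lemma \ref{LEM:cobordism 2-diagram}), and that \(Q(L)\) has exactly \(n\) orbits, so one gets at most \(n\) classes; their distinctness is then proved with the orbit-restricted cocycles \(\tilde\phi_{\alpha_i}\) above, not deferred to (c). If you repair (c) along the paper's lines, your cocycle pairings still have a role---they are precisely what separates the classes attached to different components---but they cannot by themselves produce the direct-sum decomposition over \(\Z\).
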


Also we show the relation of the shadow fundamental 
classes to the shadow cocycle invariants, 
and apply it to generalise a result in [S]. 
In the proof of the result, 
Satoh used the fact that the set of 
\(\Z_p\)-shadow colourings becomes a module. 
We show that Satoh's result derives essentially from 
the connected-ness of \(\Z_p\) 
by using the shadow fundamental classes, 
or, in other words, we prove the following theorem in general.

\begin{theorem}[Theorem \ref{TH:invariants relation}]
\label{TH:main theorem b}
Let \(X\) be a connected finite quandle 
and \(\phi\) a quandle \(3\)-cocycle of \(X\).
For any link \(L\), there holds an equation
\begin{quote}
\(\Phi_\phi(L) = |X| \cdot \Phi^\ast_\phi(L)\)
\end{quote}
between the shadow cocycle invariant \(\Phi_\phi(L)\)
and the based shadow cocycle invariant \(\Phi^\ast_\phi(L)\).
\end{theorem}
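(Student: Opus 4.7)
The plan is to reduce the stated identity to the claim that the Boltzmann pairing is independent of the base region colour, and then to establish this independence by combining the naturality of the Boltzmann construction, the connectedness of $X$, and the triviality of the inner action on quandle cohomology.

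Fix a diagram $D$ of $L$. Since $\sphere^2$ is simply connected, each arc colouring $f\in\Hom(Q(L),X)$ extends to exactly $|X|$ shadow colourings, parameterised by the colour $x\in X$ assigned to the base region. Write $W(D,(f,x))$ for the resulting Boltzmann $3$-cycle in $C_3^\qdl(X;\Z)$. Then
\begin{align*}
\Phi_\phi(L)&=\sum_f\sum_{x\in X}\langle\phi,W(D,(f,x))\rangle, &
\Phi^\ast_\phi(L)&=\sum_f\langle\phi,W(D,(f,x_0))\rangle,
\end{align*}
where $x_0\in X$ is the fixed base point used in the based invariant. The identity therefore reduces to showing that $T(x):=\sum_f\langle\phi,W(D,(f,x))\rangle$ is independent of $x$. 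Note that when $x=f(a)$ for an arc $a$ of the component $L_i$, the cycle $W(D,(f,x))$ is, by construction, the push-forward $f_\ast[L_i]$ of the shadow fundamental class from Theorem~\ref{TH:main theorem a}(b); thus $T(f(a))$ is already the cohomological pairing of $\phi$ with a sum of such push-forwards.

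To prove $T$ constant, I use the connectedness of $X$ to reduce to a single step $x\mapsto x\triangleleft y$. Quandle self-distributivity makes $\sigma_y\colon u\mapsto u\triangleleft y$ an inner automorphism of $X$, and postcomposition $f\mapsto\sigma_y\circ f$ is then a bijection of $\Hom(Q(L),X)$. Naturality of the Boltzmann cycle gives $W(D,(\sigma_y\circ f,\sigma_y(x)))=(\sigma_y)_\ast W(D,(f,x))$, and applying this bijection to the sum defining $T(\sigma_y(x))$ produces
\[
T(\sigma_y(x))=\sum_f\langle\sigma_y^\ast\phi,W(D,(f,x))\rangle.
\]
Since inner automorphisms of a quandle act trivially on quandle cohomology, $\sigma_y^\ast\phi-\phi$ is a coboundary and pairs trivially with the cycle $W(D,(f,x))$; hence $T(\sigma_y(x))=T(x)$. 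Iterating along a chain joining $x_0$ to any $x$ gives $T\equiv T(x_0)$ on $X$, and summing over the $|X|$ values of $x$ yields $\Phi_\phi(L)=|X|\cdot\Phi^\ast_\phi(L)$.

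The main obstacle is the triviality of the inner action on $H^3_\qdl(X;\Z)$. I would prove this by exhibiting an explicit chain homotopy between $(\sigma_y)_\ast$ and the identity on $C^\qdl_\ast(X)$, using the shadow fundamental classes of Theorem~\ref{TH:main theorem a} as the combinatorial template: the bounding $4$-chain obtained by comparing the Boltzmann cycles of two diagrams related by a Reidemeister~II move coloured by $y$ should realise the required homotopy in degree~$3$. This is precisely the step that lets the argument pass through the shadow fundamental classes rather than through Satoh's module-theoretic reasoning, and it is the reason connectedness of $X$ suffices in place of a module structure on the set of shadow colourings.
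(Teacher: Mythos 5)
There is a genuine gap, and it sits at the very first reduction rather than in the homological machinery. You identify the based invariant with \(\sum_f\langle\phi,W(D,(f,x_0))\rangle\) for a single fixed \(x_0\in X\), but that is not its definition: for each arc colouring \(f\), the unique allowed state is the extension whose base-region colour is \(f(c_p)\), the colour of the arc carrying the chosen point, and this element varies with \(f\) (by idempotency it is the only choice making both adjacent regions match \(c_p\)). Consequently \(\Phi^\ast_\phi(L)=\sum_f\langle\phi,W(D,(f,f(c_p)))\rangle\), and proving that the aggregated function \(T(x)=\sum_f\langle\phi,W(D,(f,x))\rangle\) is constant does not give the theorem: from constancy of \(T\) you only get \(\Phi_\phi(L)=|X|\cdot T(x_0)\), and nothing in your argument identifies \(T(x_0)\) with \(\Phi^\ast_\phi(L)\). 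What is actually needed is the per-colouring statement: for each \emph{fixed} \(f\), the pairing \(\langle\phi,W(D,(f,x))\rangle\) is independent of \(x\). Your inner-automorphism trick cannot deliver this, because the bijection \(f\mapsto\sigma_y\circ f\) necessarily re-indexes the colourings; \(W(D,(f,x\lhd y))\) keeps the arc colouring \(f\) and so is \emph{not} \((\sigma_y)_\ast W(D,(f,x))\), whose arc colouring is \(\sigma_y\circ f\). The paper proves exactly the per-colouring statement geometrically (Lemma \ref{LEM:cobordism 2-diagram} and Corollary \ref{COR:diagram class}): for a fixed colouring, inserting parallel spheres coloured \(x_1,\ldots,x_n\) below the diagram gives an explicit shadow-coloured cobordism on \(\sphere^2\times I\) from \(D_\alpha\) to \(D_\beta\) whenever \(\alpha\) and \(\beta\) are connected, so all \(|X|\) extensions of a given colouring pair equally with \(\phi\), and the factor \(|X|\) appears colouring by colouring.

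A secondary weakness: your proposed proof that inner automorphisms act trivially on \(H_\qdl^3\) (a chain homotopy ``realised by comparing Boltzmann cycles of diagrams related by an R-II move coloured by \(y\)'') is not worked out, and an R-II move does not implement \(\sigma_y\) on a colouring. The triviality of the inner action is in fact a known result and could be cited, but even granting it, the argument above shows it attacks the wrong quantity; to repair the proof you should replace the aggregate step by the fixed-colouring cobordism argument (or an equivalent chain-level statement that changing only the base shadow colour along a connection in \(X\) changes the represented cycle by a boundary).
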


In respect of the first motivation, 
by the method in [CKS1] we obtain 
a generalised knot diagram on a closed manifold 
which represents the given third homology class. 
Considering the possible surgeries on 
these diagrams, we can obtain a knot diagram 
in usual sense as a representative of the given 
homology class when a knot \(K\) is prime, 
that is, we conclude:

\begin{theorem}[Theorem \ref{TH:topological realisation}]
\label{TH:main theorem c}
Let \(K\) be a prime knot. 
For any quandle \(3\)-cycle \(c \in C^\qdl_3(Q(K); \Z)\), 
there exists a pair of a link \(L\) 
and a homomorphism \(f\): \(Q(L) \to Q(K)\) 
such that \([c] = f_\ast[L_\sh]\), 
where \([L_\sh]\) is one of the shadow fundamental 
classes of \(L\).
\end{theorem}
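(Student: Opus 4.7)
The plan is to realise the quandle 3-cycle $c$ as a shadow-coloured diagram on some closed surface, and then to push that surface down to $\sphere^2$ by surgeries, obtaining a shadow fundamental class of an ordinary link whose image under a colouring homomorphism is $[c]$.

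First, I would invoke the diagrammatic construction from [CKS1]. Each generator $(x, y, z) \in Q(K)^3$ of $C^\qdl_3(Q(K); \Z)$ corresponds to a local piece of a shadow-coloured diagram, namely a crossing whose under- and over-arcs are coloured $x$ and $y$ together with an adjacent region coloured $z$. The cycle condition $\partial c = 0$ forces these local pieces to glue along matching colour data, and quandle (as opposed to rack) degeneracies allow one to cancel the trivial triples. After this gluing one obtains a shadow-coloured link diagram $D_F$ on some closed orientable surface $F$ together with a colouring homomorphism $g \colon Q(D_F) \to Q(K)$ whose shadow fundamental class is sent to $[c]$ by $g_\ast$. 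This already proves the theorem with $L$ replaced by a link in $F \times I$.

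Next I would reduce the genus of $F$ to zero by compressing it along essential simple closed curves, carrying out the corresponding surgeries on the diagram side. At each step the diagram is cut along a pair of arcs and rejoined by short strands, which introduces new arcs and regions that must be assigned colours compatibly with the shadow condition. Here is where the hypothesis that $K$ is prime enters: for a prime knot, $Q(K)$ is connected and admits no proper decomposition corresponding to a splitting of $K$, so the new arcs and regions can always be coloured consistently by $Q(K)$, and the colouring homomorphism extends accordingly. After finitely many surgeries the surface becomes $\sphere^2$, the diagram becomes an ordinary link diagram of some link $L$, and we obtain the required homomorphism $f \colon Q(L) \to Q(K)$. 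One then checks that each surgery alters the underlying 3-chain only by a boundary: a compression is a sequence of Reidemeister-type moves plus the insertion of cancelling pairs of crossings, each of which changes the shadow 3-cycle by an explicit boundary in $C^\qdl_3$. Summing over all surgeries, the shadow fundamental class $[L_\sh]$ of the resulting link diagram satisfies $f_\ast[L_\sh] = [c]$ in $H_3^\qdl(Q(K); \Z)$.

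The main obstacle will be the surgery reduction: one has to match the topological surgery on $F$ with a diagrammatic operation that preserves the shadow colouring, and then verify that the homology class is genuinely unchanged throughout. The role of primeness is precisely to remove the obstructions that would arise in the general case from non-trivial splittings of $Q(K)$; making this rigorous, in particular ensuring that the new arcs can always be coloured and that no hidden boundary contributions appear, is the delicate technical heart of the argument.
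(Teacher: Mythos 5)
Your overall skeleton (realise \(c\) by a shadow coloured diagram on a closed surface, then reduce the surface to \(\sphere^2\) by surgeries and read off a link \(L\) with a colouring homomorphism \(f\)) matches the paper, but the technical heart of your argument has a genuine gap, and the role you assign to primeness is not correct. You claim that when you compress \(F\) along an essential simple closed curve, ``the new arcs and regions can always be coloured consistently by \(Q(K)\)'' because \(K\) is prime, \(Q(K)\) is connected, and \(K\) admits no splitting. But \(Q(K)\) is connected for \emph{every} knot, prime or not, so this cannot be where primeness enters; and the obstruction to compressing is not about colouring new arcs at all. Cutting along a curve \(C\) produces a \(Q(K)\)-shadow coloured \(1\)-diagram on \(C\), and the compression can be filled by a shadow coloured disk only if the associated group element \(\Pi(C)\in\pi(K)\) is trivial (Corollary \ref{COR:boundary 1-diagram} together with Lemma \ref{LEM:disk 2-diagram}); for a general essential curve this fails, and no appeal to connectedness of \(Q(K)\) removes it. Likewise, a compression is a genuine change of the ambient surface, not ``a sequence of Reidemeister-type moves plus cancelling crossings,'' so your claim that each step changes the chain by an explicit boundary in \(C^\qdl_3\) is unsupported.

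The paper's proof handles exactly this obstruction, and primeness is used at a different and essential point: since the diagram is shadow coloured, Lemma \ref{LEM:shadow 1-diagram} shows \(\Pi(C)\) commutes with the meridian \(a\) (the shadow colour of the base-region, viewed in \(\pi(K)\)); because \(K\) is prime, the centraliser of the meridian is the peripheral subgroup, so \(\Pi(C)=\lambda^i a^j\). For a composite knot the centraliser is strictly larger (the longitudes of both summands commute with \(a\)), and the argument breaks down---this is the actual content of the primeness hypothesis. After curves with \(\Pi(C)=e\) have been capped off, the remaining reduction is not done curve by curve: the surface is cut into a \(2n\)-gon, word presentations of \(\Pi(C_i)\) are normalised using Lemma \ref{LEM:cobordism 1-diagram}, the \(\lambda^{\pm1}\)-segments on the boundary are cancelled by inserting checker-board diagrams that represent \emph{degeneracy} \(3\)-chains (so the rack class changes but the quandle class is preserved---a subtlety your proposal never addresses), and only then can the boundary, now consisting of \(a\)-segments with cancelling signs, be closed up and capped off to give a diagram on \(\sphere^2\). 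Without identifying \(\Pi(C)\) as the obstruction, without the centraliser argument where primeness genuinely enters, and without an account of why the surgeries preserve the quandle (rather than rack) homology class, the proposal does not yield a proof.
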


This theorem indicates one way to represent 
elements of the third quandle homology groups. 
We expect that the diagrammatic representation is 
useful for complete determination of the 
third homology groups of quandles.

\section{Preliminaries.}
\label{CHAP:preliminaries}

In this section, 
we will define the concepts of 
racks and quandles, 
their (co)ho\-mol\-o\-gy theories, 
diagrams and their (shadow) colourings, 
and knot quandles. 

\subsection{Racks and Quandles.}
\label{SEC:rack quandle}

Quandles are introduced by Joyce [J]
for characterising unframed knots.
He proved that the quandles associated with knots
completely determine the knot types up to orientation,
and explained Fox's tricolourability of knots
in the viewpoint of quandles.
Racks introduced by Fenn-Rourke [FR]
are proved to be complete invariants of framed knots.
Both quandles and racks obey
the axioms of right invertibility 
and of right distributivity,
but only quandles satisfy the idempotency.

A \textbf{rack} \(R\) is a non-empty set
with two binary operations \(\lhd\) and \(\blhd\)
satisfying two axioms
\begin{quote}
(R1) \quad 
\((a \lhd b) \blhd b = a = (a \blhd b) \lhd b\) and \\
(R2) \quad 
\((a \lhd b) \lhd c = (a \lhd c) \lhd (b \lhd c)\)
\end{quote}
for any \(a\), \(b\) and \(c \in R\).
In addition, if a rack \(Q\) satisfies an axiom 
\begin{quote}
(Q) \quad 
\(a \lhd a = a = a  \blhd a\)
\end{quote}
for any \(a \in Q\), 
it is called a \textbf{quandle}. 
For two racks \(R\) and \(R'\), 
a map \(f\): \(R \to R'\) 
is called a (rack) \textbf{homomorphism} 
when it obeys 
\begin{quote}
(RH) \quad 
\(f(a \lhd b) = f(a) \lhd f(b)\) 
\end{quote}
for any \(a\) and \(b \in R\).
Notice that the other right-distributive 
laws for \(\lhd\) and \(\blhd\) can be derived 
from the axioms (R1) and (R2), 
and that the compatibility of a homomorphism 
for \(\blhd\) is a consequence of (R1) and (RH). 
Note that sometimes we write 
\(\lhd^{+ 1}\) and \(\lhd^{- 1}\) 
instead of \(\lhd\) and \(\blhd\), respectively. \\

The most important example of quandles 
is the \textbf{conjugation quandle} of a group. 
A group \(G\) can be viewed as a quandle 
with operations defined by 
\begin{quote}
\(g \lhd h = h^{- 1} g h\) 
and \(g \blhd h = h g h^{- 1}\)
\end{quote}
for \(g\) and \(h \in G\). 
We denote by \(G_{\mathrm{conj}}\) 
the conjugation quandle derived from \(G\). 

On the other hand, we can associate a group \(G_Q\) with 
a quandle \(Q\). 
The group \(G_Q\) is generated by \(Q\) 
with relations 
\begin{quote}
\(b^{- 1} a b = a \lhd b\) and \(b a b^{- 1} = a \blhd b\) 
\end{quote}
for each \(a\) and \(b \in Q\).
We call \(G_Q\) the \textbf{associated group} of \(Q\). 
There is a canonical homomorphism from \(Q\) to 
\((G_Q)_{\mathrm{conj}}\) which maps \(a \in Q\) 
to \(a \in (G_Q)_{\mathrm{conj}}\). \\

A quandle \(Q\) is called \textbf{trivial} 
if \(a \lhd b = a\) holds for any \(a\) and \(b \in Q\). 
We denote by \(T_n\) the trivial quandle with \(n\) elements. 
Notice that the conjugation quandle of a commutative group \(G\) 
is trivial.\\

For two elements \(a\) and \(b\) in a rack \(R\), 
they are said to be \textbf{connected} 
if there exist finite sequences \((x_1, \ldots, x_n)\) 
and \((\epsilon_1, \ldots, \epsilon_n)\),
where \(x_i\) is in \(R\) and \(\epsilon_i\) 
is in \(\{\pm 1\}\) for each \(i\), such that 
\begin{quote}
\((\cdots (a \lhd^{\epsilon_1} x_1) \cdots ) \lhd^{\epsilon_n} x_n 
= b\)
\end{quote}
holds. 
We call the set of elements connected 
to \(a\) the \textbf{orbit} of \(a\). 
If all elements of \(R\) are connected to one another, 
the rack \(R\) itself is called 
to be \textbf{connected}. 

For example, the trivial quandle has 
no pair of connected elements. 
On the other hand, 
the \textbf{dihedral quandle} \(\Z_p = \Z/p\Z\), 
which has operations defined by 
\begin{quote}
\(a \lhd b = a \blhd b = 2b - a\), 
\end{quote}
is connected when \(p\) is an odd prime.

\subsection{Homology and cohomology theories}
\label{SEC:homology cohomology}

Here we define (co)homology theories of a rack or a quandle. 
We will follow the definitions in [CJKS1] 
except the coefficients of boundary maps. 
It is because, under this modification of boundary maps, 
the shifting homomorphisms in \S \ref{CHAP:shadow class} 
become easily explained diagrammatically. 
Applications and computations of (co)homology theories 
can be found in [CJKS2] or 
in the other papers of the same authors. \\

Let \(R\) be a rack and \(A\) a ring with unit. 
We denote, by \(C^\rck_n(R; A)\), a free \(A\)-module \(A[R^n]\) 
for a positive integer \(n\) and, by \(C^\rck_0(R; A)\), \(A\) itself. 
Then, \(C^\rck_\ast(R; A)\) is a chain complex 
with boundary maps \(\partial_n\): 
\(C^\rck_n(R; A) \to C^\rck_{n - 1}(R; A)\) 
defined by linearly extending a map on the basis 
\begin{quote}
\(\partial_n(x_1, \ldots, x_n) \\
{}\hskip30pt = \sum\limits_{i = 1}^n (- 1)^{n - i} 
\{(x_1 \lhd x_i, \ldots, 
x_{i - 1} \lhd x_i, x_{i + 1}, \ldots, x_n) \\
{}\hskip120pt - (x_1, \ldots, x_{i - 1}, x_{i + 1}, \ldots, x_n)\}\) 
\end{quote}
for \(n \ge 2\) and \(\partial_1(x) = 1\), 
where \(x_1\), \ldots, \(x_n\) and \(x\) are in \(R\).
As usual, we set \(Z^\rck_n(R; A) = \ker \partial_n\) 
and \(B^\rck_n(R; A) = \im \partial_{n + 1}\). 
The \(n\)-th \textbf{rack homology} \(H^\rck_n(R; A)\) of \(R\) 
is defined as the quotient module \(Z^\rck_n(R; A) / B^\rck_n(R; A)\). 
The elements of \(C^\rck_n(R; A)\), \(Z^\rck_n(R; A)\) 
and \(B^\rck_n(R; A)\) are called, respectively, 
\textbf{rack \(n\)-chains}, \textbf{cycles} 
and \textbf{boundaries}. 
If the difference of two rack chains \(c_1\) and \(c_2\) 
is a rack boundary, then they are said to be 
\textbf{rack homologous}.

Considering the dual concepts \(C_\rck^n(R; A) 
= C^\rck_n(R; A)^\ast\) and 
\begin{quote}
\(\delta^n = \partial_n^\ast\): 
\(C_\rck^{n - 1}(R; A) \to C_\rck^n(R; A)\), 
\end{quote}
we obtain a cochain complex \(C_\rck^\ast(R; A)\). 
The \(n\)-th \textbf{rack cohomology} 
\(H_\rck^n(R; A)\) of \(R\) is 
the quotient \(Z_\rck^n(R; A) / B_\rck^n(R; A)\), 
where \(Z_\rck^n(R; A)\) denotes \(\ker \delta^{n + 1}\) 
and \(B_\rck^n(R; A)\) \(\im \delta^n\). 
We call the elements of \(C_\rck^n(R; A)\), 
\(Z_\rck^n(R; A)\) and \(B_\rck^n(R; A)\) 
\textbf{rack \(n\)-cochains}, 
\textbf{cocycles} and \textbf{coboundaries}, 
respectively. 
Also we call two rack cochains \(\phi_1\) and \(\phi_2\) 
\textbf{rack cohomologous} when \(\phi_1 - \phi_2\) 
is a rack coboundary. 
Since \(C^\rck_n(R; A)\) is a free module, 
rack \(n\)-cochains can be considered 
as maps from \(R^n\) to \(A\). \\

For a quandle \(Q\), 
we can define other homology and cohomology theories of \(Q\). 
Let \(C^\dgn_n(Q; A)\) be the submodule of \(C^\rck_n(Q; A)\) 
generated by \(n\)-tuples \((x_1, \ldots, x_n)\) 
such that \(x_i = x_{i + 1}\) for some \(i\), 
and let \(C^\qdl_n(Q; A)\) be the quotient module 
\(C^\rck_n(Q; A) / C^\dgn_n(Q; A)\). 
Their boundary maps are obtained naturally 
from \(\partial_n\),
which make \(C^\dgn_\ast(Q; A)\) and \(C^\qdl_\ast(Q; A)\) 
chain complexes. 
Thus we have homology theories 
\(H^{\mathrm{W}}_\ast(Q; A) 
= Z^{\mathrm{W}}_\ast(Q; A) / B^{\mathrm{W}}_\ast(Q; A)\), 
where \(\mathrm{W}\) denotes \(\dgn\) or \(\qdl\). 
They are called the \textbf{degeneracy homology} 
for \(\mathrm{W} = \dgn\) 
and the \textbf{quandle homology} for \(\mathrm{W} = \qdl\). 

In the case of cohomology theories, 
we first define \(C_\qdl^\ast(Q; A)\). 
The submodule \(C_\qdl^n(Q; A)\) of \(C_\rck^n(Q; A)\) 
consists of all rack \(n\)-cochains whose values on 
\(C^\dgn_n(Q; A)\) are constantly zero, 
and \(C_\dgn^n(Q; A)\) is the quotient module 
\(C_\rck^n(Q; A) / C_\qdl^n(Q; A)\). 
Their coboundary maps are also derived from \(\delta^n\), 
and the cochain complex \(C_{\mathrm{W}}^\ast(Q; A)\) 
induces a cohomology theory \(H_{\mathrm{W}}^\ast(Q; A)\), 
which we call the \textbf{quandle cohomology} 
and the \textbf{degeneracy cohomology}, 
respectively, for \(\mathrm{W} = \qdl\) 
and for \(\mathrm{W} = \dgn\). 

Similarly as in the rack case,
we use terms such as quandle (co)chains, 
quandle (co)cycles, and so on. 
For simplicity,
we will omit notations of the coefficient rings 
of (co)homology groups,
when we are concerned with the integral coefficient case.
\\

Directly from their definitions, 
there exist short exact sequences of complexes
\begin{quote}
\(0 \to C^\dgn_\ast(Q; A) 
\stackrel{\iota}{\to} C^\rck_\ast(Q; A) 
\stackrel{\rho}{\to} C^\qdl_\ast(Q; A) \to 0\), 
\end{quote}
and 
\begin{quote}
\(0 \to C_\qdl^\ast(Q; A) 
\stackrel{\iota}{\to} C_\rck^\ast(Q; A) 
\stackrel{\rho}{\to} C_\dgn^\ast(Q; A) \to 0\). 
\end{quote}
Hence we have long exact sequences 
of (co)ho\-mol\-o\-gy theories 
\begin{quote}
\(\cdots \to H^\dgn_\ast(Q; A) 
\stackrel{\iota_\ast}{\to} H^\rck_\ast(Q; A) 
\stackrel{\rho_\ast}{\to} H^\qdl_\ast(Q; A) 
\stackrel{\Delta_\ast}{\to} H^\dgn_{\ast - 1}(Q; A) \to \cdots\) 
\end{quote}
and 
\begin{quote}
\(\cdots \to H_\qdl^\ast(Q; A) 
\stackrel{\iota^\ast}{\to} H_\rck^\ast(Q; A) 
\stackrel{\rho^\ast}{\to} H_\dgn^\ast(Q; A) 
\stackrel{\Delta^\ast}{\to} H_\qdl^{\ast + 1}(Q; A) \to \cdots\), 
\end{quote}
where \(\Delta_\ast\) and \(\Delta^\ast\) denote 
the connecting homomorphisms. 

In [CJKS1], it is conjectured that 
all the connecting homomorphisms are zero maps. 
This conjecture is affirmatively proved by Litherland-Nelson: 

\begin{theorem}[Litherland-Nelson {[LN]}]
\label{TH:homology splitting}
For an arbitrary quandle \(Q\), 
all the connecting homomorphisms \(\Delta_\ast\) 
are zero maps. 
Moreover, the resulting short exact sequence 
\(0 \to H^\dgn_n(Q; A) 
\to H^\rck_n(Q; A) \to H^\qdl_n(Q; A) \to 0\) 
is splittable, that is, there holds
\begin{quote}
\(H^\rck_n(Q; A) \cong H^\qdl_n(Q; A) \oplus H^\dgn_n(Q; A)\). 
\end{quote}
\end{theorem}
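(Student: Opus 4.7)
Both assertions---that $\Delta_\ast = 0$ and that the short exact sequence splits---will follow at once if I can exhibit the splitting already at the chain level. Concretely, the plan is to construct a chain-map retraction
\[
r \colon C^\rck_\ast(Q; A) \to C^\dgn_\ast(Q; A), \qquad r \circ \iota = \mathrm{id}.
\]
Such an $r$ produces a direct-sum decomposition of chain complexes $C^\rck_\ast = C^\dgn_\ast \oplus \ker r$, so $\ker r$ is isomorphic to $C^\qdl_\ast$, and passing to homology yields the desired splitting of $H^\rck_n(Q; A)$. A split short exact sequence of chain complexes gives a split short exact sequence on homology with vanishing connecting map, so $\Delta_\ast = 0$ is automatic.

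The first thing to verify, which also motivates the construction, is that $C^\dgn_\ast$ is closed under $\partial$: on a tuple $(x_1, \ldots, x_n)$ with $x_j = x_{j+1}$, the two faces indexed by $i = j$ and $i = j+1$ produce identical summands after using the quandle axiom $x_j \lhd x_j = x_j$, and these summands enter the boundary with opposite signs $(-1)^{n - j}$ and $(-1)^{n - j - 1}$, so they cancel; the remaining faces preserve the position of the duplicate, hence the degeneracy. This cancellation is the key identity driven by the idempotency axiom (Q) and is exactly what makes a compatible retraction possible (and also why the statement fails for general racks).

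With that in hand, $r$ is built inductively on $n$. In low degrees there is nothing to do since $C^\dgn_0 = C^\dgn_1 = 0$. For the inductive step, I would introduce a duplication operator $s \colon C^\rck_n \to C^\dgn_{n+1}$, for instance the end-insertion $s(x_1, \ldots, x_n) = (x_1, \ldots, x_n, x_n)$, and compute the graded commutator $\partial \circ s + s \circ \partial$ directly from the boundary formula. Using (R2) to carry the right-action of the inserted element across the tuple and (Q) at the position of duplication, the terms telescope so that $\partial s + s \partial$ lies in $C^\dgn_\ast$. Thus $s$ serves as a chain homotopy between the identity and a chain map valued in $C^\dgn_\ast$; subtracting these degenerate-valued corrections off degree by degree defines $r_n$ and forces $\partial \circ r = r \circ \partial$.

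The main obstacle is the bookkeeping rather than any conceptual leap: when a duplicate is inserted at an interior position, every subsequent entry must be re-expressed via the quandle action of the inserted element, and the signs coming from the boundary formula must be tracked through each such shift. Checking the chain-map identity on a general generator $(x_1, \ldots, x_n)$ reduces to a finite combinatorial identity in $(Q, \lhd)$ provable purely from axioms (R1), (R2) and (Q); this is the step that needs genuine care, after which naturality in the coefficient ring $A$ is immediate.
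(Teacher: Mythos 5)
Your overall strategy---reduce everything to a chain-level retraction \(r\colon C^\rck_\ast(Q;A)\to C^\dgn_\ast(Q;A)\) with \(r\circ\iota=\mathrm{id}\)---is exactly the route the paper takes via Litherland--Nelson (Theorem~\ref{TH:chain complex splitting} and \S\ref{SEC:shifting splitting}), and your check that \(C^\dgn_\ast\) is a subcomplex is correct. The gap is in the step that is supposed to actually produce \(r\). With the end-insertion \(s(x_1,\ldots,x_n)=(x_1,\ldots,x_n,x_n)\) and the boundary convention of \S\ref{SEC:homology cohomology}, the computation gives, for \(n\ge 2\),
\[
(\partial s+s\partial)(x_1,\ldots,x_n)
=(x_1\lhd x_n,\ldots,x_{n-1}\lhd x_n,\,x_{n-1}\lhd x_n)-(x_1,\ldots,x_{n-1},x_{n-1}),
\]
which is indeed valued in \(C^\dgn_\ast\); but this only exhibits the \emph{degenerate-valued} chain map \(h:=\partial s+s\partial\) as null-homotopic. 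It does not make \(s\) ``a chain homotopy between the identity and a chain map valued in \(C^\dgn_\ast\)'', and no duplication-type operator can do that: any such \(s\) carries \(C^\dgn_\ast\) into itself, so it would descend to a contracting homotopy of \(C^\qdl_\ast(Q;A)\) and force \(H^\qdl_\ast(Q;A)=0\) for every quandle, contradicting, e.g., Theorem~\ref{TH:second homology}. So that sentence is false, and the homotopy computation does not feed the construction of \(r\).

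Consequently ``subtracting the degenerate-valued corrections off degree by degree'' is unjustified: in the inductive step you must produce, for every nondegenerate generator, an element \(r_n(x_1,\ldots,x_n)\in C^\dgn_n\) with \(\partial r_n=r_{n-1}\partial\) while keeping \(r_n=\mathrm{id}\) on \(C^\dgn_n\); this is a lifting problem \emph{inside} the degenerate complex, and since \(H^\dgn_\ast(Q;A)\) is in general nonzero (that is the whole point of the splitting), the required preimages cannot be obtained by an abstract homological argument---an explicit formula is needed. That formula is precisely Litherland--Nelson's: \(\alpha_n(x_1,\ldots,x_n)=x_1\bullet(x_2-x_1)\bullet\cdots\bullet(x_n-x_{n-1})\) is a chain map that vanishes on \(C^\dgn_\ast\) and is congruent to the identity modulo \(C^\dgn_\ast\), so \(r:=\mathrm{id}-\alpha\) is the desired retraction (for instance \(r_3(x,y,z)=(x,y,y)-(x,x,y)+(x,x,z)\), whose boundary you can check equals \(r_2(\partial(x,y,z))\)). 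Your proof becomes complete once this, or an equivalent explicit construction together with the verification that it is a chain map, replaces the homotopy step; as it stands, the hard content of the theorem is exactly the part left unproved.
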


In the proof of this theorem, 
Litherland-Nelson constructed a splitting homomorphism 
of the short exact sequence of chain complexes. 
We will see the detail of this homomorphism 
in \S \ref{SEC:shifting splitting},
which plays an important role 
in \S \ref{SEC:construction shadow class}.

Notice that, by the duality, we have a corollary as to 
cohomology theories: 

\begin{corollary}\label{COR:cohomology splitting}
All the connecting homomorphisms \(\Delta^\ast\) 
in the long exact sequence of the cohomology theories 
of a quandle \(Q\) are zero maps. 
Moreover, the rack homology group \(H_\rck^n(Q; A)\) 
splits as follows:
\begin{quote}
\(H_\rck^n(Q; A) \cong H_\qdl^n(Q; A) \oplus H_\dgn^n(Q; A)\). 
\end{quote}
\end{corollary}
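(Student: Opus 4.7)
The plan is to dualize the chain-level splitting provided by Litherland-Nelson in the proof of Theorem \ref{TH:homology splitting}. First, I would recall that their construction furnishes a chain map \(s\colon C^\rck_\ast(Q; A) \to C^\dgn_\ast(Q; A)\) satisfying \(s \circ \iota = \mathrm{id}_{C^\dgn_\ast(Q; A)}\); in other words, the short exact sequence
\(0 \to C^\dgn_\ast(Q; A) \to C^\rck_\ast(Q; A) \to C^\qdl_\ast(Q; A) \to 0\)
is split at the level of chain complexes, not merely at the level of homology.

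Next, I would apply the functor \(\Hom_A(-, A)\) to this split short exact sequence. Since each \(C^\rck_n(Q; A) = A[Q^n]\) is a free \(A\)-module, so are both the submodule \(C^\dgn_n(Q; A)\) (a basis being given by degenerate \(n\)-tuples) and the quotient \(C^\qdl_n(Q; A)\) (by non-degenerate ones). Thus dualization is exact in each degree, and the natural identifications \(\Hom_A(C^\qdl_n(Q; A), A) \cong C_\qdl^n(Q; A)\), \(\Hom_A(C^\rck_n(Q; A), A) = C_\rck^n(Q; A)\), and \(\Hom_A(C^\dgn_n(Q; A), A) \cong C_\dgn^n(Q; A)\) identify the dualized sequence with the cochain short exact sequence
\(0 \to C_\qdl^\ast(Q; A) \to C_\rck^\ast(Q; A) \to C_\dgn^\ast(Q; A) \to 0\).
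The dual map \(s^\ast\) then supplies a cochain-level splitting of this sequence.

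Given such a cochain-level splitting, both conclusions follow from standard homological algebra: the induced long exact sequence in cohomology degenerates into short exact sequences with vanishing connecting homomorphisms \(\Delta^\ast\), and the map induced on cohomology by \(s^\ast\) splits each resulting short exact sequence, yielding the direct-sum decomposition \(H_\rck^n(Q; A) \cong H_\qdl^n(Q; A) \oplus H_\dgn^n(Q; A)\).

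The only step that requires care is the identification of the dualized chain sequence with the cochain sequence as defined in the paper; in particular, checking that a rack cochain vanishing on \(C^\dgn_n(Q; A)\) is precisely an element of \(\Hom_A(C^\qdl_n(Q; A), A)\), and that \(C_\dgn^n(Q; A)\) — defined as a quotient — is naturally isomorphic to \(\Hom_A(C^\dgn_n(Q; A), A)\). Both identifications come straight from the defining exact sequence together with the freeness of the chain modules, so I do not anticipate any substantive obstacle.
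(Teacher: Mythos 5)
Your proposal is correct and follows essentially the same route as the paper, which deduces the corollary ``by duality'' from the chain-level splitting of \(0 \to C^\dgn_\ast(Q; A) \to C^\rck_\ast(Q; A) \to C^\qdl_\ast(Q; A) \to 0\) constructed by Litherland--Nelson. Your write-up merely makes explicit the identifications \(\Hom_A(C^\qdl_n(Q;A),A)\cong C_\qdl^n(Q;A)\) and \(C_\rck^n(Q;A)/C_\qdl^n(Q;A)\cong \Hom_A(C^\dgn_n(Q;A),A)\) (valid by degreewise freeness/splitness), which is exactly the content the paper leaves implicit.
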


\subsection{Diagrams.}
\label{SEC:diagram}

Carter-Kamada-Saito [CKS1] defined generalised knot diagrams 
for representing rack homology classes. 
Their definition of generalised knot diagrams 
works for arbitrary dimensions, 
but in this paper we only need those 
of dimension at most three. 
Furthermore, the definition in [CKS1] 
contains exceptional diagrams 
such as endpoints, branch points and hemmed crossings, 
which are not necessary for us. 
So our definition below is a simplified one. 

To avoid complication, we only say ``diagrams'' 
instead of generalised knot diagrams. 
A diagram is a pair \((M, D)\) of an oriented compact 
manifold \(M\) and its subspace \(D\)
which can be decomposed into unit diagrams. 
To begin with, 
we define \textbf{unit diagrams} of each dimension. \\

The unit \(0\)-diagram is a pair \(E^0_0 
= (\{0\}, \emptyset)\) with signature 
\(+ 1\) or \(- 1\). 
This diagram is of use only when we consider 
the boundaries of \(1\)-diagrams. \\

Let \(I\) be the interval \([- 1, 1]\) 
and suppose that \(I\) is oriented as usual. 
We have two types of the unit \(1\)-diagrams, 
that is, pairs \(E^1_1 = (I, \{0\})\) 
with signature \(+ 1\) or \(- 1\) 
and \(E^1_0 = (I, \emptyset)\) with signature \(0\). 
The diagrams \(E^1_1\) and \(E^1_0\) are 
drawn in Figure \ref{FIG:unit 1-diagram}, 
\begin{figure}[ht]
\begin{center}
\unitlength 0.1in%
\begin{picture}(36.0400, 5.5000)( 1.9200, -7.0000)%
%
%
\special{pn 8}%
\special{pa 200 400}%
\special{pa 940 400}%
\special{dt 0.045}%
\special{sh 1}%
\special{pn 4}%
\special{pa 1000 400}%
\special{pa 930 370}%
\special{pa 940 400}%
\special{pa 930 430}%
\special{pa 1000 400}%
\special{fp}%
%
\special{pn 8}%
\special{sh 1}%
\special{ar 600 400 50 50  0.0000000 6.2831853}%
%
\special{pn 20}%
\special{pa 600 400}%
\special{pa 720 400}%
\special{fp}%
\special{sh 1}%
\special{pn 4}%
\special{pa 780 400}%
\special{pa 710 370}%
\special{pa 720 400}%
\special{pa 710 430}%
\special{pa 780 400}%
\special{fp}%
%
\put(6.0000,-6.0000){\makebox(0,0){\(+ E^1_1\)}}%
%
%
\special{pn 8}%
\special{pa 1600 400}%
\special{pa 2340 400}%
\special{dt 0.045}%
\special{sh 1}%
\special{pn 4}%
\special{pa 2400 400}%
\special{pa 2330 370}%
\special{pa 2340 400}%
\special{pa 2330 430}%
\special{pa 2400 400}%
\special{fp}%
%
\special{pn 8}%
\special{sh 1}%
\special{ar 2000 400 50 50  0.0000000 6.2831853}%
%
\special{pn 20}%
\special{pa 2000 400}%
\special{pa 1880 400}%
\special{fp}%
\special{sh 1}%
\special{pn 4}%
\special{pa 1820 400}%
\special{pa 1890 370}%
\special{pa 1880 400}%
\special{pa 1890 430}%
\special{pa 1820 400}%
\special{fp}%
%
\put(20.0000,-6.0000){\makebox(0,0){\(- E^1_1\)}}%
%
%
\special{pn 8}%
\special{pa 3000 400}%
\special{pa 3740 400}%
\special{dt 0.045}%
\special{sh 1}%
\special{pn 4}%
\special{pa 3800 400}%
\special{pa 3730 370}%
\special{pa 3740 400}%
\special{pa 3730 430}%
\special{pa 3800 400}%
\special{fp}%
%
\put(34.0000,-6.0000){\makebox(0,0){\(E^1_0\)}}%
%
%
\end{picture}%
\end{center}
\caption{Unit \(1\)-diagrams.}
\label{FIG:unit 1-diagram}
\end{figure}
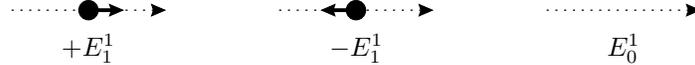
where the first one is \(E^1_1\) with signature \(+ 1\), 
the second is \(E^1_1\) with \(- 1\) 
and the third is \(E^1_0\). 
The signatures of \(E^1_1\)'s are depicted 
with the arrows at the origin. \\

Let \(S\) be a square \(I^2\), 
and let \(l_1\) and \(l_2\) be lines \(I \times \{0\}\) 
and \(\{0\} \times I\) in \(S\), respectively. 
The orientation of \(l_1\) is fixed 
as in Figure \ref{FIG:line orientation}, 
but that of \(l_2\) is not.
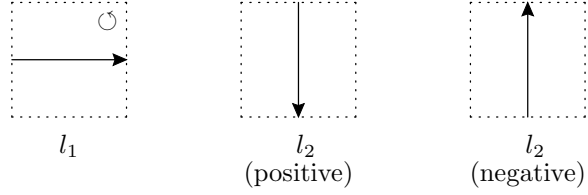
\begin{figure}[ht]
\begin{center}
\unitlength 0.1in%
\begin{picture}( 34.0800,  10.0000)( 1.9200, -11.5)%
%
\special{pn 8}%
\special{pa 200 200}%
\special{pa 200 800}%
\special{dt 0.045}%
\special{pa 200 800}%
\special{pa 800 800}%
\special{dt 0.045}%
\special{pa 800 800}%
\special{pa 800 200}%
\special{dt 0.045}%
\special{pa 800 200}%
\special{pa 200 200}%
\special{dt 0.045}%
%
\special{pn 8}%
\special{pa 200 500}%
\special{pa 740 500}%
\special{fp}%
\special{sh 1}%
\special{pn 4}%
\special{pa 800 500}%
\special{pa 730 470}%
\special{pa 740 500}%
\special{pa 730 530}%
\special{pa 800 500}%
\special{fp}%
%
\put(7.0000,-3.0000){\makebox(0,0){\(\circlearrowleft\)}}%
%
\put(5.0000,-9.5){\makebox(0,0){\(l_1\)}}%
%
%
\special{pn 8}%
\special{pa 1400 200}%
\special{pa 1400 800}%
\special{dt 0.045}%
\special{pa 1400 800}%
\special{pa 2000 800}%
\special{dt 0.045}%
\special{pa 2000 800}%
\special{pa 2000 200}%
\special{dt 0.045}%
\special{pa 2000 200}%
\special{pa 1400 200}%
\special{dt 0.045}%
%
\special{pn 8}%
\special{pa 1700 200}%
\special{pa 1700 740}%
\special{fp}%
\special{sh 1}%
\special{pn 4}%
\special{pa 1700 800}%
\special{pa 1730 730}%
\special{pa 1700 740}%
\special{pa 1670 730}%
\special{pa 1700 800}%
\special{fp}%
%
\put(17.4,-9.5){\makebox(0,0){\(l_2\)}}%
\put(17,-11){\makebox(0,0){(positive)}}%
%
%
\special{pn 8}%
\special{pa 2600 200}%
\special{pa 2600 800}%
\special{dt 0.045}%
\special{pa 2600 800}%
\special{pa 3200 800}%
\special{dt 0.045}%
\special{pa 3200 800}%
\special{pa 3200 200}%
\special{dt 0.045}%
\special{pa 3200 200}%
\special{pa 2600 200}%
\special{dt 0.045}%
%
\special{pn 8}%
\special{pa 2900 800}%
\special{pa 2900 260}%
\special{fp}%
\special{sh 1}%
\special{pn 4}%
\special{pa 2900 200}%
\special{pa 2870 270}%
\special{pa 2900 260}%
\special{pa 2930 270}%
\special{pa 2900 200}%
\special{fp}%
%
\put(29.4,-9.5){\makebox(0,0){\(l_2\)}}%
\put(29,-11){\makebox(0,0){(negative)}}%
%
%
\end{picture}%
\end{center}
\caption{Lines and their orientations.}
\label{FIG:line orientation}
\end{figure}
The unit \(2\)-diagrams are of three types; 
pairs \(E^2_2 = (S, l_1 \cup l_2)\), 
\(E^2_1 = (S, l_1)\) 
and \(E^2_0 = (S, \emptyset)\). 
The diagram \(E^2_2\) has its signature 
\(+ 1\) or \(- 1\) as \(E^1_1\), but the signatures 
of \(E^2_1\) and \(E^2_0\) are \(0\). 
The left two diagrams in Figure \ref{FIG:unit 2-diagram} shows 
\(E^2_2\) with signature \(+ 1\) and with \(- 1\). 
\begin{figure}[ht]
\begin{center}
\unitlength 0.1in%
\begin{picture}( 42.0800,  9.5000)(  1.9200, -10.8000)%
%
%
\special{pn 8}%
\special{pa 200 200}%
\special{pa 200 800}%
\special{dt 0.045}%
\special{pa 200 800}%
\special{pa 800 800}%
\special{dt 0.045}%
\special{pa 800 800}%
\special{pa 800 200}%
\special{dt 0.045}%
\special{pa 800 200}%
\special{pa 200 200}%
\special{dt 0.045}%
%
\special{pn 8}%
\special{pa 200 500}%
\special{pa 740 500}%
\special{fp}%
\special{pn 4}%
\special{sh 1}%
\special{pa 800 500}%
\special{pa 730 470}%
\special{pa 740 500}%
\special{pa 730 530}%
\special{pa 800 500}%
\special{fp}%
%
\special{pn 8}%
\special{pa 500 800}%
\special{pa 500 550}%
\special{fp}%
\special{pn 8}%
\special{pa 500 450}%
\special{pa 500 260}%
\special{fp}%
\special{pn 4}%
\special{sh 1}%
\special{pa 500 200}%
\special{pa 470 270}%
\special{pa 500 260}%
\special{pa 530 270}%
\special{pa 500 200}%
\special{fp}%
%
\put(5.0000,-10.0000){\makebox(0,0){\(+ E^2_2\)}}%
%
%
\special{pn 8}%
\special{pa 1400 200}%
\special{pa 1400 800}%
\special{dt 0.045}%
\special{pa 1400 800}%
\special{pa 2000 800}%
\special{dt 0.045}%
\special{pa 2000 800}%
\special{pa 2000 200}%
\special{dt 0.045}%
\special{pa 2000 200}%
\special{pa 1400 200}%
\special{dt 0.045}%
%
\special{pn 8}%
\special{pa 1400 500}%
\special{pa 1940 500}%
\special{fp}%
\special{pn 4}%
\special{sh 1}%
\special{pa 2000 500}%
\special{pa 1930 470}%
\special{pa 1940 500}%
\special{pa 1930 530}%
\special{pa 2000 500}%
\special{fp}%
%
\special{pn 8}%
\special{pa 1700 200}%
\special{pa 1700 450}%
\special{fp}%
\special{pn 8}%
\special{pa 1700 550}%
\special{pa 1700 740}%
\special{fp}%
\special{pn 4}%
\special{sh 1}%
\special{pa 1700 800}%
\special{pa 1730 730}%
\special{pa 1700 740}%
\special{pa 1670 730}%
\special{pa 1700 800}%
\special{fp}%
%
\put(17.0000,-10.0000){\makebox(0,0){\(- E^2_2\)}}%
%
%
\special{pn 8}%
\special{pa 2600 200}%
\special{pa 2600 800}%
\special{dt 0.045}%
\special{pa 2600 800}%
\special{pa 3200 800}%
\special{dt 0.045}%
\special{pa 3200 800}%
\special{pa 3200 200}%
\special{dt 0.045}%
\special{pa 3200 200}%
\special{pa 2600 200}%
\special{dt 0.045}%
%
\special{pn 8}%
\special{pa 2600 500}%
\special{pa 3140 500}%
\special{fp}%
\special{pn 4}%
\special{sh 1}%
\special{pa 3200 500}%
\special{pa 3130 470}%
\special{pa 3140 500}%
\special{pa 3130 530}%
\special{pa 3200 500}%
\special{fp}%
%
\put(29.0500,-10.0000){\makebox(0,0){\(E^2_1\)}}%
%
%
\special{pn 8}%
\special{pa 3800 200}%
\special{pa 3800 800}%
\special{dt 0.045}%
\special{pa 3800 800}%
\special{pa 4400 800}%
\special{dt 0.045}%
\special{pa 4400 800}%
\special{pa 4400 200}%
\special{dt 0.045}%
\special{pa 4400 200}%
\special{pa 3800 200}%
\special{dt 0.045}%
%
\put(41.0500,-10.0000){\makebox(0,0){\(E^2_0\)}}%
%
%
\end{picture}%
\end{center}
\caption{Unit \(2\)-diagram.}
\label{FIG:unit 2-diagram}
\end{figure}
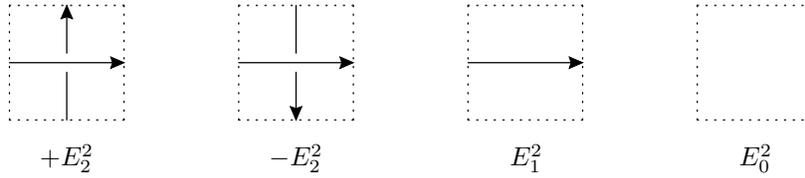
The orientation of \(l_2\) in \(E^2_2\) 
depends on the signature of the diagram. 
We call \(l_1\) and \(l_2\) the higher and the lower lines, 
respectively, 
and cut the lower line near the origin 
to distinguish them. 
The unit \(2\)-diagrams \(E^2_1\) and \(E^2_0\) are, 
respectively, the right two diagrams 
in Figure \ref{FIG:unit 2-diagram}. \\

Let \(B\) be a cube \(I^3\). 
Denote, by \(s_1\), \(s_2\) and \(s_3\), 
sheets \(I^2 \times \{0\}\), 
\(I \times \{0\} \times I\) and \(\{0\} \times I^2\) in \(B\), 
respectively. 
\begin{figure}[ht]
\begin{center}
\unitlength 0.1in%
\begin{picture}( 43.8000,  13.0000)(  1.9200,-14.2000)%
%
%
\special{pn 8}%
\special{pa 200 400}%
\special{pa 200 1000}%
\special{dt 0.045}%
\special{pn 8}%
\special{pa 200 1000}%
\special{pa 800 1000}%
\special{dt 0.045}%
\special{pn 8}%
\special{pa 800 1000}%
\special{pa 800 400}%
\special{dt 0.045}%
\special{pn 8}%
\special{pa 800 400}%
\special{pa 200 400}%
\special{dt 0.045}%
\special{pn 8}%
\special{pa 380 220}%
\special{pa 380 520}%
\special{dt 0.045}%
\special{pn 8}%
\special{pa 380 700}%
\special{pa 380 820}%
\special{dt 0.045}%
\special{pn 8}%
\special{pa 380 820}%
\special{pa 980 820}%
\special{dt 0.045}%
\special{pn 8}%
\special{pa 980 820}%
\special{pa 980 220}%
\special{dt 0.045}%
\special{pn 8}%
\special{pa 980 220}%
\special{pa 380 220}%
\special{dt 0.045}%
\special{pn 8}%
\special{pa 200 400}%
\special{pa 380 220}%
\special{dt 0.045}%
\special{pn 8}%
\special{pa 200 1000}%
\special{pa 380 820}%
\special{dt 0.045}%
\special{pn 8}%
\special{pa 800 1000}%
\special{pa 980 820}%
\special{dt 0.045}%
\special{pn 8}%
\special{pa 800 400}%
\special{pa 980 220}%
\special{dt 0.045}%
%
\special{pn 8}%
\special{pa 200 700}%
\special{pa 740 700}%
\special{fp}%
\special{pn 4}%
\special{sh 1}%
\special{pa 800 700}%
\special{pa 730 670}%
\special{pa 740 700}%
\special{pa 730 730}%
\special{pa 800 700}%
\special{fp}%
\special{pn 8}%
\special{pa 800 700}%
\special{pa 980 520}%
\special{fp}%
\special{pn 4}%
\special{sh 1}%
\special{pa 980 520}%
\special{pa 945 525}%
\special{pa 950 550}%
\special{pa 945 585}%
\special{pa 980 520}%
\special{fp}%
\special{pn 8}%
\special{pa 980 520}%
\special{pa 440 520}%
\special{fp}%
\special{pn 4}%
\special{sh 1}%
\special{pa 380 520}%
\special{pa 450 550}%
\special{pa 440 520}%
\special{pa 450 490}%
\special{pa 380 520}%
\special{fp}%
\special{pn 8}%
\special{pa 380 520}%
\special{pa 200 700}%
\special{fp}%
\special{pn 4}%
\special{sh 1}%
\special{pa 200 700}%
\special{pa 235 695}%
\special{pa 230 670}%
\special{pa 235 635}%
\special{pa 200 700}%
\special{fp}%
%
\put(6.0000,-12.0000){\makebox(0,0){\(s_1\)}}%
%
%
\special{pn 8}%
\special{pa 1400 400}%
\special{pa 1400 1000}%
\special{dt 0.045}%
\special{pn 8}%
\special{pa 1400 1000}%
\special{pa 2000 1000}%
\special{dt 0.045}%
\special{pn 8}%
\special{pa 2000 1000}%
\special{pa 2000 400}%
\special{dt 0.045}%
\special{pn 8}%
\special{pa 2000 400}%
\special{pa 1400 400}%
\special{dt 0.045}%
\special{pn 8}%
\special{pa 1580 220}%
\special{pa 1580 310}%
\special{dt 0.045}%
\special{pn 8}%
\special{pa 2090 820}%
\special{pa 2180 820}%
\special{dt 0.045}%
\special{pn 8}%
\special{pa 2180 820}%
\special{pa 2180 220}%
\special{dt 0.045}%
\special{pn 8}%
\special{pa 2180 220}%
\special{pa 1580 220}%
\special{dt 0.045}%
\special{pn 8}%
\special{pa 1400 400}%
\special{pa 1580 220}%
\special{dt 0.045}%
\special{pn 8}%
\special{pa 1400 1000}%
\special{pa 1490 910}%
\special{dt 0.045}%
\special{pn 8}%
\special{pa 2000 1000}%
\special{pa 2180 820}%
\special{dt 0.045}%
\special{pn 8}%
\special{pa 2000 400}%
\special{pa 2180 220}%
\special{dt 0.045}%
%
\special{pn 8}%
\special{pa 1490 310}%
\special{pa 2030 310}%
\special{fp}%
\special{pn 4}%
\special{sh 1}%
\special{pa 2090 310}%
\special{pa 2040 290}%
\special{pa 2030 310}%
\special{pa 2000 330}%
\special{pa 2090 310}%
\special{fp}%
\special{pn 8}%
\special{pa 2090 310}%
\special{pa 2090 910}%
\special{fp}%
\special{pn 4}%
\special{sh 1}%
\special{pa 2090 910}%
\special{pa 2110 820}%
\special{pa 2090 850}%
\special{pa 2070 860}%
\special{pa 2090 910}%
\special{fp}%
\special{pn 8}%
\special{pa 2090 910}%
\special{pa 1490 910}%
\special{fp}%
\special{pn 4}%
\special{sh 1}%
\special{pa 1490 910}%
\special{pa 1540 930}%
\special{pa 1550 910}%
\special{pa 1580 890}%
\special{pa 1490 910}%
\special{fp}%
\special{pn 8}%
\special{pa 1490 910}%
\special{pa 1490 310}%
\special{fp}%
\special{pn 4}%
\special{sh 1}%
\special{pa 1490 310}%
\special{pa 1470 400}%
\special{pa 1490 370}%
\special{pa 1510 360}%
\special{pa 1490 310}%
\special{fp}%
%
\put(18.0000,-12.0000){\makebox(0,0){\(s_2\)}}%
%
%
\special{pn 8}%
\special{pa 2600 400}%
\special{pa 2600 1000}%
\special{dt 0.045}%
\special{pn 8}%
\special{pa 2600 1000}%
\special{pa 3200 1000}%
\special{dt 0.045}%
\special{pn 8}%
\special{pa 3200 1000}%
\special{pa 3200 400}%
\special{dt 0.045}%
\special{pn 8}%
\special{pa 3200 400}%
\special{pa 2600 400}%
\special{dt 0.045}%
\special{pn 8}%
\special{pa 2780 220}%
\special{pa 2780 820}%
\special{dt 0.045}%
\special{pn 8}%
\special{pa 2780 820}%
\special{pa 2900 820}%
\special{dt 0.045}%
\special{pn 8}%
\special{pa 3080 820}%
\special{pa 3380 820}%
\special{dt 0.045}%
\special{pn 8}%
\special{pa 3380 820}%
\special{pa 3380 220}%
\special{dt 0.045}%
\special{pn 8}%
\special{pa 3380 220}%
\special{pa 2780 220}%
\special{dt 0.045}%
\special{pn 8}%
\special{pa 2600 400}%
\special{pa 2780 220}%
\special{dt 0.045}%
\special{pn 8}%
\special{pa 2600 1000}%
\special{pa 2780 820}%
\special{dt 0.045}%
\special{pn 8}%
\special{pa 3200 1000}%
\special{pa 3380 820}%
\special{dt 0.045}%
\special{pn 8}%
\special{pa 3200 400}%
\special{pa 3380 220}%
\special{dt 0.045}%
%
\special{pn 8}%
\special{pa 3080 220}%
\special{pa 3080 760}%
\special{fp}%
\special{pn 4}%
\special{sh 1}%
\special{pa 3080 820}%
\special{pa 3110 750}%
\special{pa 3080 760}%
\special{pa 3050 750}%
\special{pa 3080 820}%
\special{fp}%
\special{pn 8}%
\special{pa 3080 820}%
\special{pa 2900 1000}%
\special{fp}%
\special{pn 4}%
\special{sh 1}%
\special{pa 2900 1000}%
\special{pa 2905 965}%
\special{pa 2930 970}%
\special{pa 2965 965}%
\special{pa 2900 1000}%
\special{fp}%
\special{pn 8}%
\special{pa 2900 1000}%
\special{pa 2900 460}%
\special{fp}%
\special{pn 4}%
\special{sh 1}%
\special{pa 2900 400}%
\special{pa 2870 470}%
\special{pa 2900 460}%
\special{pa 2930 470}%
\special{pa 2900 400}%
\special{fp}%
\special{pn 8}%
\special{pa 2900 400}%
\special{pa 3050 250}%
\special{fp}%
\special{pn 4}%
\special{sh 1}%
\special{pa 3080 220}%
\special{pa 3075 255}%
\special{pa 3050 250}%
\special{pa 3015 255}%
\special{pa 3080 220}%
\special{fp}%
%
\put(30.0000,-12.0000){\makebox(0,0){\(s_3\)}}%
\put(30.0000,-13.5000){\makebox(0,0){(positive)}}%
%
%
\special{pn 8}%
\special{pa 3800 400}%
\special{pa 3800 1000}%
\special{dt 0.045}%
\special{pn 8}%
\special{pa 3800 1000}%
\special{pa 4400 1000}%
\special{dt 0.045}%
\special{pn 8}%
\special{pa 4400 1000}%
\special{pa 4400 400}%
\special{dt 0.045}%
\special{pn 8}%
\special{pa 4400 400}%
\special{pa 3800 400}%
\special{dt 0.045}%
\special{pn 8}%
\special{pa 3980 220}%
\special{pa 3980 820}%
\special{dt 0.045}%
\special{pn 8}%
\special{pa 3980 820}%
\special{pa 4100 820}%
\special{dt 0.045}%
\special{pn 8}%
\special{pa 4280 820}%
\special{pa 4580 820}%
\special{dt 0.045}%
\special{pn 8}%
\special{pa 4580 820}%
\special{pa 4580 220}%
\special{dt 0.045}%
\special{pn 8}%
\special{pa 4580 220}%
\special{pa 3980 220}%
\special{dt 0.045}%
\special{pn 8}%
\special{pa 3800 400}%
\special{pa 3980 220}%
\special{dt 0.045}%
\special{pn 8}%
\special{pa 3800 1000}%
\special{pa 3980 820}%
\special{dt 0.045}%
\special{pn 8}%
\special{pa 4400 1000}%
\special{pa 4580 820}%
\special{dt 0.045}%
\special{pn 8}%
\special{pa 4400 400}%
\special{pa 4580 220}%
\special{dt 0.045}%
%
\special{pn 8}%
\special{pa 4280 220}%
\special{pa 4130 370}%
\special{fp}%
\special{pn 4}%
\special{sh 1}%
\special{pa 4100 400}%
\special{pa 4105 365}%
\special{pa 4130 370}%
\special{pa 4165 365}%
\special{pa 4100 400}%
\special{fp}%
\special{pn 8}%
\special{pa 4100 400}%
\special{pa 4100 940}%
\special{fp}%
\special{pn 4}%
\special{sh 1}%
\special{pa 4100 1000}%
\special{pa 4130 930}%
\special{pa 4100 940}%
\special{pa 4070 930}%
\special{pa 4100 1000}%
\special{fp}%
\special{pn 8}%
\special{pa 4100 1000}%
\special{pa 4250 850}%
\special{fp}%
\special{pn 4}%
\special{sh 1}%
\special{pa 4280 820}%
\special{pa 4215 855}%
\special{pa 4250 850}%
\special{pa 4275 855}%
\special{pa 4280 820}%
\special{fp}%
\special{pn 8}%
\special{pa 4280 820}%
\special{pa 4280 280}%
\special{fp}%
\special{pn 4}%
\special{sh 1}%
\special{pa 4280 220}%
\special{pa 4250 290}%
\special{pa 4280 280}%
\special{pa 4310 290}%
\special{pa 4280 220}%
\special{fp}%
%
\put(42.0000,-12.0000){\makebox(0,0){\(s_3\)}}%
\put(42.0000,-13.5000){\makebox(0,0){(negative)}}%
%
%
\end{picture}%
\end{center}
\caption{Sheets.}
\label{FIG:sheet orientation}
\end{figure}
The orientations of sheets \(s_1\) and \(s_2\) 
are given as in Figure \ref{FIG:sheet orientation}, 
but that of \(s_3\) is variable. 
There are four types of the unit \(3\)-diagrams, 
denoted by \(E^3_3\), \(E^3_2\), \(E^3_1\) and \(E^3_0\). 
They are defined, respectively, by 
\begin{quote}
\(E^3_3 = (B, s_1 \cup s_2 \cup s_3)\), 
\(E^3_2 = (B, s_1 \cup s_2)\), \\
\(E^3_1 = (B, s_1)\) and 
\(E^3_0 = (B, \emptyset)\). 
\end{quote}
In this case, the signatures of \(E^3_2\), \(E^3_1\) 
and \(E^3_0\) are all \(0\). 
\begin{figure}[ht]
\begin{center}
\input{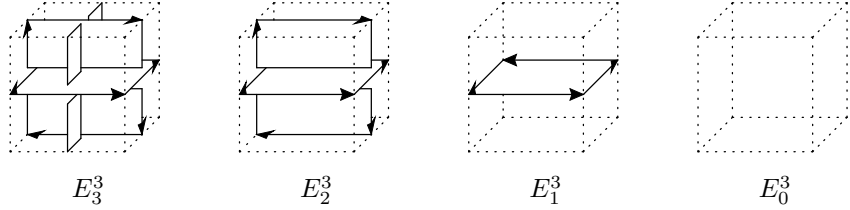}
\end{center}
\caption{Unit \(3\)-diagrams.}
\label{FIG:unit 3-diagram}
\end{figure}
Only \(E^3_3\) has its signature \(+ 1\) or \(- 1\), 
which determines the orientation of \(s_3\) 
as in Figure \ref{FIG:sheet orientation}. 
Each diagram is drawn in Figure \ref{FIG:unit 3-diagram}, 
where the signature of \(E^3_3\) is disregarded. 
We call \(s_1\), \(s_2\) and \(s_3\) in \(E^3_3\), 
respectively, the highest, the middle and the lowest sheets, 
and also call \(s_1\) and \(s_2\) in \(E^3_2\), 
respectively, the higher and the lower ones. 
The lower two sheets \(s_2\) and \(s_3\) are drawn separated 
as in Figure \ref{FIG:unit 3-diagram} 
to distinguish one sheet from the others. 

For a unit diagram \(E = E^n_k\), 
we will call a pair \((n, k)\) its type.
The points on the intersection of two lines or 
of just two sheets are called double points. 
The origin of \(E^3_3\) is said to be a triple point. 
We say multiple points for both double and triple points. \\

For \(n \ne 0\), 
the boundary of a unit \(n\)-diagram 
consists of \(2n\) faces in the usual sense. 
Obviously, each face is one of unit \((n - 1)\)-diagrams. 
For example, the boundary of \(E^3_2\) 
consists of six faces as depicted in Figure \ref{FIG:boundary}. 
Two of them are \(+ E^2_2\) and \(- E^2_2\) 
and the rest are \(E^2_1\)'s. 
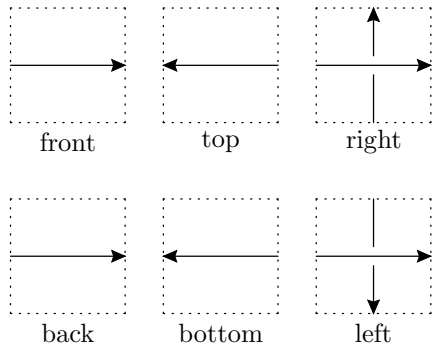
\begin{figure}[ht]
\begin{center}
\unitlength 0.1in%
\begin{picture}( 22.0800, 17.7000)(  1.9200,-19.4000)%
%
%
\special{pn 8}%
\special{pa 200 200}%
\special{pa 200 800}%
\special{dt 0.045}%
\special{pa 200 800}%
\special{pa 800 800}%
\special{dt 0.045}%
\special{pa 800 800}%
\special{pa 800 200}%
\special{dt 0.045}%
\special{pa 800 200}%
\special{pa 200 200}%
\special{dt 0.045}%
%
\special{pn 8}%
\special{pa 200 500}%
\special{pa 800 500}%
\special{fp}%
\special{pn 4}%
\special{sh 1}%
\special{pa 800 500}%
\special{pa 730 470}%
\special{pa 740 500}%
\special{pa 730 530}%
\special{pa 800 500}%
\special{fp}%
%
\put(5.0000,-9.0000){\makebox(0,0){front}}%
%
%
\special{pn 8}%
\special{pa 200 1200}%
\special{pa 200 1800}%
\special{dt 0.045}%
\special{pa 200 1800}%
\special{pa 800 1800}%
\special{dt 0.045}%
\special{pa 800 1800}%
\special{pa 800 1200}%
\special{dt 0.045}%
\special{pa 800 1200}%
\special{pa 200 1200}%
\special{dt 0.045}%
%
\special{pn 8}%
\special{pa 200 1500}%
\special{pa 800 1500}%
\special{fp}%
\special{pn 4}%
\special{sh 1}%
\special{pa 800 1500}%
\special{pa 730 1470}%
\special{pa 740 1500}%
\special{pa 730 1530}%
\special{pa 800 1500}%
\special{fp}%
%
\put(5.0000,-19.0000){\makebox(0,0){back}}%
%
%
\special{pn 8}%
\special{pa 1000 200}%
\special{pa 1000 800}%
\special{dt 0.045}%
\special{pa 1000 800}%
\special{pa 1600 800}%
\special{dt 0.045}%
\special{pa 1600 800}%
\special{pa 1600 200}%
\special{dt 0.045}%
\special{pa 1600 200}%
\special{pa 1000 200}%
\special{dt 0.045}%
%
\special{pn 8}%
\special{pa 1600 500}%
\special{pa 1000 500}%
\special{fp}%
\special{pn 4}%
\special{sh 1}%
\special{pa 1000 500}%
\special{pa 1070 530}%
\special{pa 1060 500}%
\special{pa 1070 470}%
\special{pa 1000 500}%
\special{fp}%
%
\put(13.0000,-9.0000){\makebox(0,0){top}}%
%
%
\special{pn 8}%
\special{pa 1000 1200}%
\special{pa 1000 1800}%
\special{dt 0.045}%
\special{pa 1000 1800}%
\special{pa 1600 1800}%
\special{dt 0.045}%
\special{pa 1600 1800}%
\special{pa 1600 1200}%
\special{dt 0.045}%
\special{pa 1600 1200}%
\special{pa 1000 1200}%
\special{dt 0.045}%
%
\special{pn 8}%
\special{pa 1600 1500}%
\special{pa 1000 1500}%
\special{fp}%
\special{pn 4}%
\special{sh 1}%
\special{pa 1000 1500}%
\special{pa 1070 1530}%
\special{pa 1060 1500}%
\special{pa 1070 1470}%
\special{pa 1000 1500}%
\special{fp}%
%
\put(13.0000,-19.0000){\makebox(0,0){bottom}}%
%
%
\special{pn 8}%
\special{pa 1800 200}%
\special{pa 1800 800}%
\special{dt 0.045}%
\special{pa 1800 800}%
\special{pa 2400 800}%
\special{dt 0.045}%
\special{pa 2400 800}%
\special{pa 2400 200}%
\special{dt 0.045}%
\special{pa 2400 200}%
\special{pa 1800 200}%
\special{dt 0.045}%
%
\special{pn 8}%
\special{pa 1800 500}%
\special{pa 2400 500}%
\special{fp}%
\special{pn 4}%
\special{sh 1}%
\special{pa 2400 500}%
\special{pa 2330 470}%
\special{pa 2340 500}%
\special{pa 2330 530}%
\special{pa 2400 500}%
\special{fp}%
%
\special{pn 8}%
\special{pa 2100 800}%
\special{pa 2100 550}%
\special{fp}%
\special{pn 8}%
\special{pa 2100 450}%
\special{pa 2100 200}%
\special{fp}%
\special{pn 4}%
\special{sh 1}%
\special{pa 2100 200}%
\special{pa 2070 270}%
\special{pa 2100 260}%
\special{pa 2130 270}%
\special{pa 2100 200}%
\special{fp}%
%
\put(21.0000,-9.0000){\makebox(0,0){right}}%
%
%
\special{pn 8}%
\special{pa 1800 1200}%
\special{pa 1800 1800}%
\special{dt 0.045}%
\special{pa 1800 1800}%
\special{pa 2400 1800}%
\special{dt 0.045}%
\special{pa 2400 1800}%
\special{pa 2400 1200}%
\special{dt 0.045}%
\special{pa 2400 1200}%
\special{pa 1800 1200}%
\special{dt 0.045}%
%
\special{pn 8}%
\special{pa 1800 1500}%
\special{pa 2400 1500}%
\special{fp}%
\special{pn 4}%
\special{sh 1}%
\special{pa 2400 1500}%
\special{pa 2330 1470}%
\special{pa 2340 1500}%
\special{pa 2330 1530}%
\special{pa 2400 1500}%
\special{fp}%
%
\special{pn 8}%
\special{pa 2100 1200}%
\special{pa 2100 1450}%
\special{fp}%
\special{pn 8}%
\special{pa 2100 1550}%
\special{pa 2100 1800}%
\special{fp}%
\special{pn 4}%
\special{sh 1}%
\special{pa 2100 1800}%
\special{pa 2130 1730}%
\special{pa 2100 1740}%
\special{pa 2070 1730}%
\special{pa 2100 1800}%
\special{fp}%
%
\put(21.0000,-19.0000){\makebox(0,0){left}}%
%
%
\end{picture}%
\end{center}
\caption{Boundary faces of \(E^3_2\).}
\label{FIG:boundary}
\end{figure}
Diagrams in general are obtained by attaching 
these unit diagrams between their faces.
Let \(F_1 = (B_1, P_1)\) and \(F_2 = (B_2, P_2)\) 
be two faces of the same type with opposite signatures 
(they may belong to the same diagram).
An attaching map \(f\): \(F_1 \to F_2\) 
is an orientation reversing homeomorphism 
\(B_1 \to B_2\) such that \(f\) maps 
\(P_1\) to \(P_2\) with preserving the levels 
of each components. 
As for the orientations of components,
we assume that \(f\) preserves them when the faces 
are odd dimensional, but it reverses them otherwise.
\begin{figure}[ht]
\begin{center}
\unitlength 0.1in%
\begin{picture}( 36.1600,  6.8800)(  1.9200,-10.3200)%
%
%
\special{pn 8}%
\special{pa 200 400}%
\special{pa 200 1000}%
\special{dt 0.045}%
\special{pn 8}%
\special{pa 200 1000}%
\special{pa 800 1000}%
\special{dt 0.045}%
\special{pn 8}%
\special{pa 800 1000}%
\special{pa 800 400}%
\special{dt 0.045}%
\special{pn 8}%
\special{pa 800 400}%
\special{pa 200 400}%
\special{dt 0.045}%
%
\special{pn 8}%
\special{pa 200 700}%
\special{pa 800 700}%
\special{fp}%
\special{pn 4}%
\special{sh 1}%
\special{pa 800 700}%
\special{pa 730 670}%
\special{pa 740 700}%
\special{pa 730 730}%
\special{pa 800 700}%
\special{fp}%
%
\special{pn 8}%
\special{pa 500 1000}%
\special{pa 500  750}%
\special{fp}%
\special{pn 8}%
\special{pa 500  650}%
\special{pa 500  400}%
\special{fp}%
\special{pn 4}%
\special{sh 1}%
\special{pa 500 460}%
\special{pa 530 470}%
\special{pa 500 400}%
\special{pa 470 470}%
\special{pa 500 460}%
\special{fp}%
%
%
\special{pn 8}%
\special{pa 1400 400}%
\special{pa 1400 1000}%
\special{dt 0.045}%
\special{pn 8}%
\special{pa 1400 1000}%
\special{pa 2000 1000}%
\special{dt 0.045}%
\special{pn 8}%
\special{pa 2000 1000}%
\special{pa 2000 400}%
\special{dt 0.045}%
\special{pn 8}%
\special{pa 2000 400}%
\special{pa 1400 400}%
\special{dt 0.045}%
%
\special{pn 8}%
\special{pa 1400 700}%
\special{pa 1650 700}%
\special{fp}%
\special{pn 8}%
\special{pa 1750 700}%
\special{pa 2000 700}%
\special{fp}%
\special{pn 4}%
\special{sh 1}%
\special{pa 2000 700}%
\special{pa 1930 670}%
\special{pa 1940 700}%
\special{pa 1930 730}%
\special{pa 2000 700}%
\special{fp}%
%
\special{pn 8}%
\special{pa 1700 1000}%
\special{pa 1700  400}%
\special{fp}%
\special{pn 4}%
\special{sh 1}%
\special{pa 1700 460}%
\special{pa 1730 470}%
\special{pa 1700 400}%
\special{pa 1670 470}%
\special{pa 1700 460}%
\special{fp}%
%
%
\special{pn 8}%
\special{pa 2600 400}%
\special{pa 2600 1000}%
\special{dt 0.045}%
\special{pn 8}%
\special{pa 2600 1000}%
\special{pa 3800 1000}%
\special{dt 0.045}%
\special{pn 8}%
\special{pa 3800 1000}%
\special{pa 3800 400}%
\special{dt 0.045}%
\special{pn 8}%
\special{pa 3800 400}%
\special{pa 2600 400}%
\special{dt 0.045}%
\special{pn 8}%
\special{pa 3200 400}%
\special{pa 3200 1000}%
\special{dt 0.045}%
%
\special{pn 8}%
\special{pa 2600 700}%
\special{pa 3450 700}%
\special{fp}%
\special{pn 8}%
\special{pa 3550 700}%
\special{pa 3800 700}%
\special{fp}%
\special{pn 4}%
\special{sh 1}%
\special{pa 3800 700}%
\special{pa 3730 670}%
\special{pa 3740 700}%
\special{pa 3730 730}%
\special{pa 3800 700}%
\special{fp}%
%
\special{pn 8}%
\special{pa 2900 1000}%
\special{pa 2900  750}%
\special{fp}%
\special{pn 8}%
\special{pa 2900  650}%
\special{pa 2900  400}%
\special{fp}%
\special{pn 4}%
\special{sh 1}%
\special{pa 2900 460}%
\special{pa 2930 470}%
\special{pa 2900 400}%
\special{pa 2870 470}%
\special{pa 2900 460}%
\special{fp}%
%
\special{pn 8}%
\special{pa 3500 1000}%
\special{pa 3500  400}%
\special{fp}%
\special{pn 4}%
\special{sh 1}%
\special{pa 3500 460}%
\special{pa 3530 470}%
\special{pa 3500 400}%
\special{pa 3470 470}%
\special{pa 3500 460}%
\special{fp}%
%
%
\put(11.0000,-7.0000){\makebox(0,0){\(+\)}}%
\put(23.0000,-7.0000){\makebox(0,0){\(=\)}}%
%
%
\end{picture}%
\begin{center}
\end{center}
\input{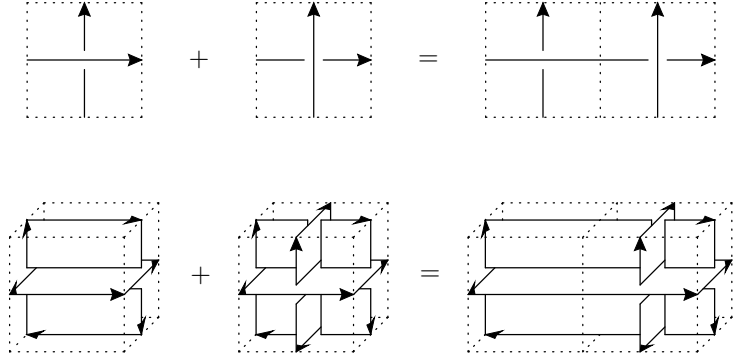}
\end{center}
\caption{Attaching two diagrams.}
\label{FIG:attachment}
\end{figure}
We show examples of attaching maps in Figure \ref{FIG:attachment}.
In the upper line, 
two \(E^2_1\)'s with opposite signatures are attached, 
where the faces are odd dimensional. 
On the other hand, an attaching map 
between \(E^3_2\) and \(E^3_3\) 
are depicted in the lower line. \\

Fix a compact manifold \(M\) with dimension \(d \le 3\). 
Let \(\{E_1, \ldots, E_n\}\) be a finite set 
of copies of unit \(d\)-diagrams, 
that is, \(E_i = (B_i, P_i)\) is one of 
\(E^d_0\), \ldots, \(E^d_d\) for each \(i\), 
and let \(f_i\) be a homeomorphism of \(B_i\) 
into \(M\) for each \(i\).
We suppose that \(M\) is covered with \(\{f_i(B_i)\}\). 
Denote, by \(F_1\), \ldots, \(F_{2d n}\), all the faces 
of \(d\)-diagrams \(E_1\), \ldots, \(E_n\), 
and denote by \(g_j\) the restriction of \(f_i\) 
over \(F_j\), where \(F_j\) is a face of \(E_i\).
If each \(F_i\) is attached to another face \(F_j\) 
by \(g_j^{- 1} \circ g_i\) or contained in \(\partial M\) 
through \(g_i\), and if the interiors of two distinct 
diagrams \(E_i\) and \(E_j\) are disjoint in \(M\),
we call the image \(D = \bigcup f_i(P_i)\) 
a \textbf{diagram} on \(M\).
We also denote a diagram by \((M, D)\).

Obviously, a pair \((M, \emptyset)\) is a diagram, 
which we call the trivial diagram on \(M\). 
For a diagram \(D\) on \(M\),
by reversing the orientation of the whole manifold \(M\) 
with keeping the direction of normal vector 
of each component of \(D\),
we obtain a new diagram, denoted by \(- D\). 
When we consider \(M\) as a based space, 
we assume that the basepoint is in the exterior 
\(M \backslash D\) of the diagram.

\subsection{Colourings and shadow colourings.}
\label{SEC:colouring}

Let \(D\) be a diagram on a manifold \(M\). 
Regarding the line \(l_2\) 
and the sheets \(s_2\) and \(s_3\) of unit diagrams 
as separated in fact, 
we can consider that \(D\) consists of 
connected components. 
Denote by \(\mathcal{C}(D)\) the set of 
all connected components of \(D\), 
and denote by \(\mathcal{R}(D)\) 
the set of all connected components 
of the exterior \(M \backslash D\). 
We call elements of \(\mathcal{C}(D)\) 
and of \(\mathcal{R}(D)\), respectively, 
\textbf{components} and \textbf{regions} of \(D\). 
For example, the unit diagram \(E^2_2\) has 
three components and four regions.

Let \(p\) be a point of \(D\). 
If \(p\) is not a multiple point, 
one component and two regions are found in its neighbourhood. 
Denote by \(c_p\) the component where \(p\) exists, 
and by \(r^\ini_p\) and \(r^\ter_p\) the regions 
so that the normal vector of \(c_p\) points 
to \(r^\ter_p\). 
These regions \(r^\ini_p\) and \(r^\ter_p\) are
called, respectively, the initial and the terminal 
region of \(p\) (or of \(c_p\)).

On the other hand, 
if \(p\) is a double point, 
there are three components of \(D\) in the neighbourhood of \(p\). 
Denote by \(o_p\) the component in the higher level,
or the over-arc. 
The other two components are in the lower level. 
We denote them by \(u^\ini_p\) and \(u^\ter_p\) 
so that the normal vector of \(o_p\) points 
to \(u^\ter_p\). 
They are also called the initial and the terminal 
under-arc of the crossing \(p\).
Additionally, for a multiple point \(p\),
there exists a unique region \(r^\ini_p\) 
adjacent to \(p\) such that \(r^\ini_p\) is 
the initial region of all components 
adjacent to both \(p\) and \(r^\ini_p\).
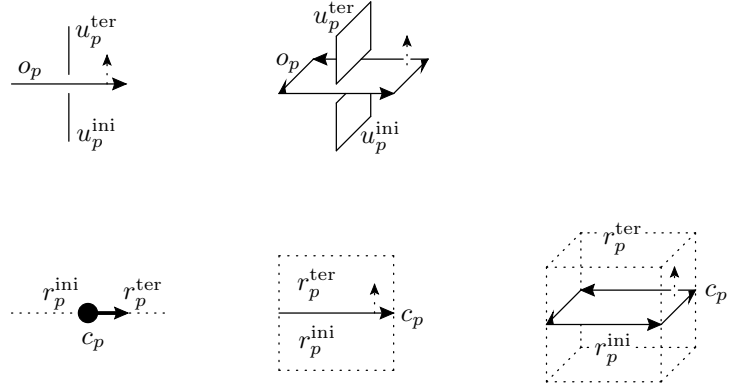
\begin{figure}[ht]
\begin{center}
\unitlength 0.1in%
\begin{picture}( 37.8000, 21.0000)( 1.9200,-23.0000)%
%
%
\special{pn 8}%
\special{pa 200 700}%
\special{pa 800 700}%
\special{fp}%
\special{pn 4}%
\special{sh 1}%
\special{pa 800 700}%
\special{pa 730 670}%
\special{pa 740 700}%
\special{pa 730 730}%
\special{pa 800 700}%
\special{fp}%
\special{pn 8}%
\special{pa 500 1000}%
\special{pa 500 750}%
\special{fp}%
\special{pn 8}%
\special{pa 500 650}%
\special{pa 500 400}%
\special{fp}%
\special{pn 8}%
\special{pa 700 700}%
\special{pa 700 550}%
\special{dt 0.045}%
\special{pn 4}%
\special{sh 1}%
\special{pa 700 550}%
\special{pa 680 600}%
\special{pa 700 590}%
\special{pa 720 600}%
\special{pa 700 550}%
\special{fp}%
\put(3.0000,-6.3000){\makebox(0,0){\(o_p\)}}%
\put(6.4500,-4.2500){\makebox(0,0){\(u^\ter_p\)}}%
\put(6.4500,-9.6500){\makebox(0,0){\(u^\ini_p\)}}%
%
%
\special{pn 8}%
\special{pa 1600 750}%
\special{pa 2200 750}%
\special{fp}%
\special{pn 4}%
\special{sh 1}%
\special{pa 2200 750}%
\special{pa 2130 720}%
\special{pa 2140 750}%
\special{pa 2130 780}%
\special{pa 2200 750}%
\special{fp}%
\special{pn 8}%
\special{pa 2200 750}%
\special{pa 2380 570}%
\special{fp}%
\special{pn 4}%
\special{sh 1}%
\special{pa 2380 570}%
\special{pa 2345 635}%
\special{pa 2350 600}%
\special{pa 2345 575}%
\special{pa 2380 570}%
\special{fp}%
\special{pn 8}%
\special{pa 2380 570}%
\special{pa 2290 570}%
\special{fp}%
\special{pn 8}%
\special{pa 2250 570}%
\special{pa 2030 570}%
\special{fp}%
\special{pn 8}%
\special{pa 1900 570}%
\special{pa 1780 570}%
\special{fp}%
\special{pn 4}%
\special{sh 1}%
\special{pa 1780 570}%
\special{pa 1850 540}%
\special{pa 1840 570}%
\special{pa 1850 600}%
\special{pa 1780 570}%
\special{fp}%
\special{pn 8}%
\special{pa 1780 570}%
\special{pa 1600 750}%
\special{fp}%
\special{pn 4}%
\special{sh 1}%
\special{pa 1600 750}%
\special{pa 1635 745}%
\special{pa 1630 720}%
\special{pa 1635 685}%
\special{pa 1600 750}%
\special{fp}%
\special{pn 8}%
\special{pa 2270 600}%
\special{pa 2270 450}%
\special{dt 0.045}%
\special{pn 4}%
\special{sh 1}%
\special{pa 2270 450}%
\special{pa 2250 500}%
\special{pa 2270 490}%
\special{pa 2290 500}%
\special{pa 2270 450}%
\special{fp}%
%
\special{pn 8}%
\special{pa 2080 270}%
\special{pa 1900 450}%
\special{fp}%
\special{pn 8}%
\special{pa 1900 450}%
\special{pa 1900 700}%
\special{fp}%
\special{pn 8}%
\special{pa 1900 700}%
\special{pa 2080 520}%
\special{fp}%
\special{pn 8}%
\special{pa 2080 520}%
\special{pa 2080 270}%
\special{fp}%
%
\special{pn 8}%
\special{pa 1950 750}%
\special{pa 1900 800}%
\special{fp}%
\special{pn 8}%
\special{pa 1900 800}%
\special{pa 1900 1050}%
\special{fp}%
\special{pn 8}%
\special{pa 1900 1050}%
\special{pa 2080 870}%
\special{fp}%
\special{pn 8}%
\special{pa 2080 870}%
\special{pa 2080 750}%
\special{fp}%
\put(16.5000,-5.9000){\makebox(0,0){\(o_p\)}}%
\put(21.3000,-9.8000){\makebox(0,0){\(u^\ini_p\)}}%
\put(18.9000,-3.5000){\makebox(0,0){\(u^\ter_p\)}}%
%
%
\special{pn 8}%
\special{pa 200 1900}%
\special{pa 1000 1900}%
\special{dt 0.045}%
%
\special{pn 8}%
\special{sh 1}%
\special{ar 600 1900 50 50  0.0000000 6.2831853}%
\special{pn 20}%
\special{pa 600 1900}%
\special{pa 790 1900}%
\special{fp}%
\special{pn 4}%
\special{sh 1}%
\special{pa 810 1900}%
\special{pa 740 1870}%
\special{pa 750 1900}%
\special{pa 740 1930}%
\special{pa 810 1900}%
\special{fp}%
\put(6.3000,-20.5000){\makebox(0,0){\(c_p\)}}%
\put(4.6000,-18.0000){\makebox(0,0){\(r^\ini_p\)}}%
\put(8.9000,-18.0000){\makebox(0,0){\(r^\ter_p\)}}%
%
%
\special{pn 8}%
\special{pa 1600 1600}%
\special{pa 1600 2200}%
\special{dt 0.045}%
\special{pn 8}%
\special{pa 1600 2200}%
\special{pa 2200 2200}%
\special{dt 0.045}%
\special{pn 8}%
\special{pa 2200 2200}%
\special{pa 2200 1600}%
\special{dt 0.045}%
\special{pn 8}%
\special{pa 2200 1600}%
\special{pa 1600 1600}%
\special{dt 0.045}%
%
\special{pn 8}%
\special{pa 1600 1900}%
\special{pa 2200 1900}%
\special{fp}%
\special{pn 4}%
\special{sh 1}%
\special{pa 2200 1900}%
\special{pa 2130 1870}%
\special{pa 2140 1900}%
\special{pa 2130 1930}%
\special{pa 2200 1900}%
\special{fp}%
\special{pn 8}%
\special{pa 2100 1900}%
\special{pa 2100 1750}%
\special{dt 0.045}%
\special{pn 4}%
\special{sh 1}%
\special{pa 2100 1750}%
\special{pa 2080 1800}%
\special{pa 2100 1790}%
\special{pa 2120 1800}%
\special{pa 2100 1750}%
\special{fp}%
\put(23.0000,-19.3000){\makebox(0,0){\(c_p\)}}%
\put(18.0000,-17.5000){\makebox(0,0){\(r^\ter_p\)}}%
\put(18.0000,-20.5000){\makebox(0,0){\(r^\ini_p\)}}%
%
%
\special{pn 8}%
\special{pa 3000 1660}%
\special{pa 3000 2260}%
\special{dt 0.045}%
\special{pn 8}%
\special{pa 3000 2260}%
\special{pa 3600 2260}%
\special{dt 0.045}%
\special{pn 8}%
\special{pa 3600 2260}%
\special{pa 3600 1660}%
\special{dt 0.045}%
\special{pn 8}%
\special{pa 3600 1660}%
\special{pa 3000 1660}%
\special{dt 0.045}%
\special{pn 8}%
\special{pa 3180 1480}%
\special{pa 3180 1780}%
\special{dt 0.045}%
\special{pn 8}%
\special{pa 3180 1960}%
\special{pa 3180 2080}%
\special{dt 0.045}%
\special{pn 8}%
\special{pa 3180 2080}%
\special{pa 3220 2080}%
\special{dt 0.045}%
\special{pn 8}%
\special{pa 3470 2080}%
\special{pa 3780 2080}%
\special{dt 0.045}%
\special{pn 8}%
\special{pa 3780 2080}%
\special{pa 3780 1480}%
\special{dt 0.045}%
\special{pn 8}%
\special{pa 3780 1480}%
\special{pa 3520 1480}%
\special{dt 0.045}%
\special{pn 8}%
\special{pa 3260 1480}%
\special{pa 3180 1480}%
\special{dt 0.045}%
\special{pn 8}%
\special{pa 3000 1660}%
\special{pa 3180 1480}%
\special{dt 0.045}%
\special{pn 8}%
\special{pa 3000 2260}%
\special{pa 3180 2080}%
\special{dt 0.045}%
\special{pn 8}%
\special{pa 3600 2260}%
\special{pa 3780 2080}%
\special{dt 0.045}%
\special{pn 8}%
\special{pa 3600 1660}%
\special{pa 3780 1480}%
\special{dt 0.045}%
%
\special{pn 8}%
\special{pa 3000 1960}%
\special{pa 3600 1960}%
\special{fp}%
\special{pn 4}%
\special{sh 1}%
\special{pa 3600 1960}%
\special{pa 3530 1930}%
\special{pa 3540 1960}%
\special{pa 3530 1990}%
\special{pa 3600 1960}%
\special{fp}%
\special{pn 8}%
\special{pa 3600 1960}%
\special{pa 3780 1780}%
\special{fp}%
\special{pn 4}%
\special{sh 1}%
\special{pa 3780 1780}%
\special{pa 3745 1845}%
\special{pa 3750 1810}%
\special{pa 3745 1785}%
\special{pa 3780 1780}%
\special{fp}%
\special{pn 8}%
\special{pa 3780 1780}%
\special{pa 3690 1780}%
\special{fp}%
\special{pn 8}%
\special{pa 3650 1780}%
\special{pa 3180 1780}%
\special{fp}%
\special{pn 4}%
\special{sh 1}%
\special{pa 3180 1780}%
\special{pa 3250 1810}%
\special{pa 3240 1780}%
\special{pa 3250 1750}%
\special{pa 3180 1780}%
\special{fp}%
\special{pn 8}%
\special{pa 3180 1780}%
\special{pa 3000 1960}%
\special{fp}%
\special{pn 4}%
\special{sh 1}%
\special{pa 3000 1960}%
\special{pa 3035 1955}%
\special{pa 3030 1930}%
\special{pa 3035 1895}%
\special{pa 3000 1960}%
\special{fp}%
\special{pn 8}%
\special{pa 3670 1810}%
\special{pa 3670 1660}%
\special{dt 0.045}%
\special{pn 4}%
\special{sh 1}%
\special{pa 3670 1660}%
\special{pa 3650 1710}%
\special{pa 3670 1700}%
\special{pa 3690 1710}%
\special{pa 3670 1660}%
\special{fp}%
\put(38.9000,-18.0000){\makebox(0,0){\(c_p\)}}%
\put(33.5000,-20.9000){\makebox(0,0){\(r^\ini_p\)}}%
\put(34.0000,-15.1000){\makebox(0,0){\(r^\ter_p\)}}%
%
%
\end{picture}%
\end{center}
\caption{Notations of components and regions.}
\label{FIG:notation}
\end{figure}
Figure \ref{FIG:notation} shows these notations. \\

Let \(R\) be a rack.
An \textbf{\(R\)-colouring} \(C\) is a map 
\(\mathcal{C}(D) \to R\) satisfying
\begin{quote}
(C) \quad \(C(u^\ini_p) \lhd C(o_p) = C(u^\ter_p)\) 
\end{quote}
for each double point \(p\).
A pair of an \(R\)-colouring \(C\) 
and a map \(C'\): \(\mathcal{R}(D) \to R\) is 
called an \textbf{\(R\)-shadow colouring} 
if it satisfies 
\begin{quote}
(SC) \quad \(C'(r^\ini_p) \lhd C(c_p) = C'(r^\ter_p)\), 
\end{quote}
where \(p \in D\) is not a multiple point. 
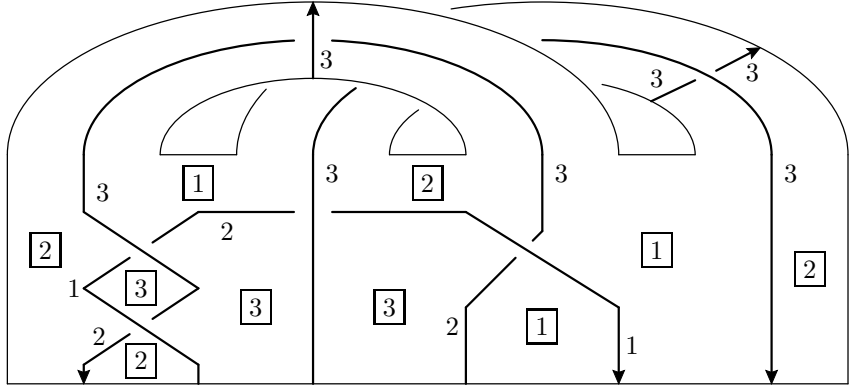
\begin{figure}[ht]
\begin{center}
\unitlength 0.1in%
\begin{picture}( 44.1000, 21.0000)(  3.92000,-26.0000)%
\special{pn 8}%
\special{pa 400 1400}%
\special{pa 400 2600}%
\special{fp}%
\special{pn 8}%
\special{pa 400 2600}%
\special{pa 4800 2600}%
\special{fp}%
\special{pn 8}%
\special{pa 4800 2600}%
\special{pa 4800 1400}%
\special{fp}%
\special{pn 8}%
\special{ar 3200 1400 1600 800  4.4095041 6.2831853}%
\special{pn 8}%
\special{ar 3200 1400 1600 800  3.1415927 3.5850410}%
\special{pn 8}%
\special{pa 1600 1400}%
\special{pa 1200 1400}%
\special{fp}%
\special{pn 8}%
\special{ar 2000 1400 800 400  3.1415927 6.2831853}%
\special{pn 8}%
\special{pa 2800 1400}%
\special{pa 2400 1400}%
\special{fp}%
\special{pn 8}%
\special{ar 3200 1400 800 400  3.1415927 3.7689008}%
\special{pn 8}%
\special{ar 3200 1400 800 400  5.1142596 6.2831853}%
\special{pn 8}%
\special{pa 4000 1400}%
\special{pa 3600 1400}%
\special{fp}%
\special{pn 8}%
\special{ar 2000 1400 1600 800  3.1415927 6.2831853}%
\special{pn 13}%
\special{pa 1400 2600}%
\special{pa 1400 2500}%
\special{fp}%
\special{pn 13}%
\special{pa 1400 2500}%
\special{pa 800 2100}%
\special{fp}%
\special{pn 13}%
\special{pa 800 2100}%
\special{pa 1040 1940}%
\special{fp}%
\special{pn 13}%
\special{pa 1160 1860}%
\special{pa 1400 1700}%
\special{fp}%
\special{pn 13}%
\special{pa 1400 1700}%
\special{pa 1900 1700}%
\special{fp}%
\special{pn 13}%
\special{pa 2100 1700}%
\special{pa 2800 1700}%
\special{fp}%
\special{pn 13}%
\special{pa 2800 1700}%
\special{pa 3600 2200}%
\special{fp}%
\special{pn 13}%
\special{pa 3600 2200}%
\special{pa 3600 2580}%
\special{fp}%
\special{pn 4}%
\special{sh 1}%
\special{pa 3600 2600}%
\special{pa 3630 2530}%
\special{pa 3600 2540}%
\special{pa 3570 2530}%
\special{pa 3600 2600}%
\special{fp}%
\special{pn 13}%
\special{pa 2000 1400}%
\special{pa 2000 2600}%
\special{fp}%
\special{pn 13}%
\special{ar 3200 1400 1200 600  3.1415927 3.7618421}%
\special{pn 13}%
\special{ar 3200 1400 1200 600  4.7123890 6.2831853}%
\special{pn 13}%
\special{pa 800 1700}%
\special{pa 800 1400}%
\special{fp}%
\special{pn 13}%
\special{pa 4400 1400}%
\special{pa 4400 2580}%
\special{fp}%
\special{pn 4}%
\special{sh 1}%
\special{pa 4400 2600}%
\special{pa 4430 2530}%
\special{pa 4400 2540}%
\special{pa 4370 2530}%
\special{pa 4400 2600}%
\special{fp}%
\special{pn 13}%
\special{pa 2800 2600}%
\special{pa 2800 2200}%
\special{fp}%
\special{pn 13}%
\special{pa 2800 2200}%
\special{pa 3060 1940}%
\special{fp}%
\special{pn 13}%
\special{pa 3150 1850}%
\special{pa 3200 1800}%
\special{fp}%
\special{pn 13}%
\special{pa 3200 1800}%
\special{pa 3200 1400}%
\special{fp}%
\special{pn 13}%
\special{ar 2000 1400 1200 600  4.7955302 6.2831853}%
\special{pn 13}%
\special{ar 2000 1400 1200 600  3.1415927 4.6292477}%
\special{pn 13}%
\special{pa 800 1700}%
\special{pa 1400 2100}%
\special{fp}%
\special{pn 13}%
\special{pa 1400 2100}%
\special{pa 1160 2260}%
\special{fp}%
\special{pn 13}%
\special{pa 1040 2340}%
\special{pa 800 2500}%
\special{fp}%
\special{pn 13}%
\special{pa 800 2500}%
\special{pa 800 2580}%
\special{fp}%
\special{pn 4}%
\special{sh 1}%
\special{pa 800 2600}%
\special{pa 830 2530}%
\special{pa 800 2540}%
\special{pa 770 2530}%
\special{pa 800 2600}%
\special{fp}%
\special{pn 13}%
\special{pa 2000 1000}%
\special{pa 2000 620}%
\special{fp}%
\special{pn 4}%
\special{sh 1}%
\special{pa 2000 600}%
\special{pa 1970 670}%
\special{pa 2000 660}%
\special{pa 2030 670}%
\special{pa 2000 600}%
\special{fp}%
\special{pn 13}%
\special{pa 3770 1120}%
\special{pa 4000 1010}%
\special{fp}%
\special{pn 13}%
\special{pa 4110 960}%
\special{pa 4320 850}%
\special{fp}%
\special{pn 4}%
\special{sh 1}%
\special{pa 4340 840}%
\special{pa 4270 844}%
\special{pa 4292 864}%
\special{pa 4295 895}%
\special{pa 4340 840}%
\special{fp}%
\put(9.0000,-16.0000){\makebox(0,0){3}}%
\put(20.7000,-9.0000){\makebox(0,0){3}}%
\put(33.0000,-15.0000){\makebox(0,0){3}}%
\put(45.0000,-15.0000){\makebox(0,0){3}}%
\put(43.0000,-9.7000){\makebox(0,0){3}}%
\put(38.0000,-10.0000){\makebox(0,0){3}}%
\put(36.7000,-24.0000){\makebox(0,0){1}}%
\put(7.5000,-21.0000){\makebox(0,0){1}}%
\put(15.5000,-18.0000){\makebox(0,0){2}}%
\put(27.3000,-23.0000){\makebox(0,0){2}}%
\put(8.8000,-23.5000){\makebox(0,0){2}}%
\put(21.0000,-15.0000){\makebox(0,0){3}}%
\put(17.0000,-22.0000){\makebox(0,0){\fbox{3}}}%
\put(11.0000,-21.0000){\makebox(0,0){\fbox{3}}}%
\put(6.0000,-19.0000){\makebox(0,0){\fbox{2}}}%
\put(14.0000,-15.5000){\makebox(0,0){\fbox{1}}}%
\put(38.0000,-19.0000){\makebox(0,0){\fbox{1}}}%
\put(32.0000,-23.0000){\makebox(0,0){\fbox{1}}}%
\put(24.0000,-22.0000){\makebox(0,0){\fbox{3}}}%
\put(46.0000,-20.0000){\makebox(0,0){\fbox{2}}}%
\put(26.0000,-15.5000){\makebox(0,0){\fbox{2}}}%
\put(11.0000,-24.8000){\makebox(0,0){\fbox{2}}}%
\end{picture}%
\end{center}
\caption{Example of (shadow) coloured diagram.}
\label{FIG:coloured diagram example}
\end{figure}
We call \(C(c)\) the \textbf{colour} of a component \(c\),
and \(C'(r)\) the \textbf{shadow colour} of a region \(r\).
When a pair of a diagram 
and its \(R\)-(shadow) colouring is given, 
we call it an \(R\)-(shadow) coloured diagram. 
An example of (shadow) coloured diagrams is drawn 
in Figure \ref{FIG:coloured diagram example}. 
The diagram is (shadow) coloured 
with the dihedral quandle \(\Z_3\). 
In Figure \ref{FIG:coloured diagram example} 
and in figures below,
the letters in boxes signify 
the shadow colours of regions. \\

Assume that homology theories are 
with integral coefficients. 
We will show the correspondence 
between \(n\)-chains of \(R\) 
and \(R\)-coloured \(n\)-diagrams 
or between \((n + 1)\)-chains 
and \(R\)-shadow coloured \(n\)-diagrams 
for \(n = 0\), \(1\), \(2\) and \(3\). 

First we consider \(R\)-(shadow) coloured unit diagrams.
In each dimension, 
unit diagrams \(E^n_k\) correspond to \(0\) 
if \(k < n\). 
Figure \ref{FIG:coloured diagram} shows the correspondence 
between \(R\)-coloured unit diagrams \(E^n_n\) and 
rack \(n\)-chains, where each \(\epsilon\) denotes 
the signature of the corresponding diagram.
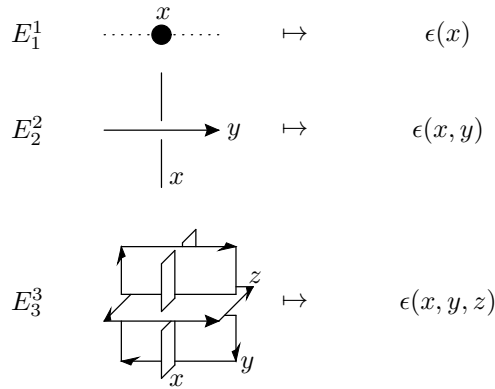
\begin{figure}[ht]
\begin{center}
\unitlength 0.1in%
\begin{picture}( 25.7000, 20.8500)( 3.1000,-25.0000)%
%
%
\special{pn 8}%
\special{pa 800 600}%
\special{pa 1400 600}%
\special{dt 0.045}%
\special{pn 8}%
\special{sh 1}%
\special{ar 1100 600 50 50  0.0000000 6.2831853}%
\put(11.1000,-4.9000){\makebox(0,0){\(x\)}}%
\put(4.0000,-6.0000){\makebox(0,0){\(E^1_1\)}}%
\put(18.0000,-6.0000){\makebox(0,0){\(\mapsto\)}}%
\put(26.0000,-6.0000){\makebox(0,0){\(\epsilon (x)\)}}%
%
%
\special{pn 8}%
\special{pa 800 1100}%
\special{pa 1400 1100}%
\special{fp}%
\special{pn 4}%
\special{sh 1}%
\special{pa 1400 1100}%
\special{pa 1330 1070}%
\special{pa 1340 1100}%
\special{pa 1330 1130}%
\special{pa 1400 1100}%
\special{fp}%
\special{pn 8}%
\special{pa 1100 800}%
\special{pa 1100 1050}%
\special{fp}%
\special{pn 8}%
\special{pa 1100 1150}%
\special{pa 1100 1400}%
\special{fp}%
\put(11.8000,-13.5000){\makebox(0,0){\(x\)}}%
\put(14.8000,-11.1000){\makebox(0,0){\(y\)}}%
\put(4.0000,-11.0000){\makebox(0,0){\(E^2_2\)}}%
\put(18.0000,-11.0000){\makebox(0,0){\(\mapsto\)}}%
\put(26.0000,-11.0000){\makebox(0,0){\(\epsilon (x, y)\)}}%
%
%
\special{pn 8}%
\special{pa 940 1960}%
\special{pa 800 2100}%
\special{fp}%
\special{pn 4}%
\special{sh 1}%
\special{pa 800 2100}%
\special{pa 835 2035}%
\special{pa 830 2070}%
\special{pa 835 2095}%
\special{pa 800 2100}%
\special{fp}%
\special{pn 8}%
\special{pa 800 2100}%
\special{pa 1400 2100}%
\special{fp}%
\special{pn 4}%
\special{sh 1}%
\special{pa 1400 2100}%
\special{pa 1330 2070}%
\special{pa 1340 2100}%
\special{pa 1330 2130}%
\special{pa 1400 2100}%
\special{fp}%
\special{pn 8}%
\special{pa 1400 2100}%
\special{pa 1580 1920}%
\special{fp}%
\special{pn 4}%
\special{sh 1}%
\special{pa 1580 1920}%
\special{pa 1545 1925}%
\special{pa 1550 1950}%
\special{pa 1545 1985}%
\special{pa 1580 1920}%
\special{fp}%
\special{pn 8}%
\special{pa 1580 1920}%
\special{pa 1490 1920}%
\special{fp}%
\special{pn 8}%
\special{pa 890 1710}%
\special{pa 1490 1710}%
\special{fp}%
\special{pn 4}%
\special{sh 1}%
\special{pa 1490 1710}%
\special{pa 1440 1690}%
\special{pa 1430 1710}%
\special{pa 1400 1730}%
\special{pa 1490 1710}%
\special{fp}%
\special{pn 8}%
\special{pa 1490 1710}%
\special{pa 1490 1960}%
\special{fp}%
\special{pn 8}%
\special{pa 1490 1960}%
\special{pa 1170 1960}%
\special{fp}%
\special{pn 8}%
\special{pa 1100 1960}%
\special{pa 890 1960}%
\special{fp}%
\special{pn 8}%
\special{pa 890 1960}%
\special{pa 890 1710}%
\special{fp}%
\special{pn 4}%
\special{sh 1}%
\special{pa 890 1710}%
\special{pa 870 1800}%
\special{pa 890 1770}%
\special{pa 910 1760}%
\special{pa 890 1710}%
\special{fp}%
\special{pn 8}%
\special{pa 1430 2070}%
\special{pa 1490 2070}%
\special{fp}%
\special{pn 8}%
\special{pa 1490 2070}%
\special{pa 1490 2310}%
\special{fp}%
\special{pn 4}%
\special{sh 1}%
\special{pa 1490 2310}%
\special{pa 1510 2220}%
\special{pa 1490 2250}%
\special{pa 1470 2260}%
\special{pa 1490 2310}%
\special{fp}%
\special{pn 8}%
\special{pa 1490 2310}%
\special{pa 1170 2310}%
\special{fp}%
\special{pn 8}%
\special{pa 1100 2310}%
\special{pa 890 2310}%
\special{fp}%
\special{pn 4}%
\special{sh 1}%
\special{pa 890 2310}%
\special{pa 940 2330}%
\special{pa 950 2310}%
\special{pa 980 2290}%
\special{pa 890 2310}%
\special{fp}%
\special{pn 8}%
\special{pa 890 2310}%
\special{pa 890 2100}%
\special{fp}%
\special{pn 8}%
\special{pa 1170 1730}%
\special{pa 1100 1800}%
\special{fp}%
\special{pn 8}%
\special{pa 1100 1800}%
\special{pa 1100 2050}%
\special{fp}%
\special{pn 8}%
\special{pa 1100 2050}%
\special{pa 1170 1980}%
\special{fp}%
\special{pn 8}%
\special{pa 1170 1980}%
\special{pa 1170 1730}%
\special{fp}%
\special{pn 8}%
\special{pa 1150 2100}%
\special{pa 1100 2150}%
\special{fp}%
\special{pn 8}%
\special{pa 1100 2150}%
\special{pa 1100 2400}%
\special{fp}%
\special{pn 8}%
\special{pa 1100 2400}%
\special{pa 1170 2330}%
\special{fp}%
\special{pn 8}%
\special{pa 1170 2330}%
\special{pa 1170 2100}%
\special{fp}%
\special{pn 8}%
\special{pa 1280 1710}%
\special{pa 1280 1620}%
\special{fp}%
\special{pn 8}%
\special{pa 1280 1620}%
\special{pa 1210 1690}%
\special{fp}%
\special{pn 8}%
\special{pa 1210 1690}%
\special{pa 1210 1710}%
\special{fp}%
\put(11.8000,-24.1000){\makebox(0,0){\(x\)}}%
\put(15.5000,-23.3000){\makebox(0,0){\(y\)}}%
\put(15.9000,-18.7000){\makebox(0,0){\(z\)}}%
\put(4.0000,-20.0000){\makebox(0,0){\(E^3_3\)}}%
\put(18.0000,-20.0000){\makebox(0,0){\(\mapsto\)}}%
\put(26.0000,-20.0000){\makebox(0,0){\(\epsilon (x, y, z)\)}}%
%
%
\end{picture}%
\end{center}
\caption{Coloured diagrams and corresponding chains.}
\label{FIG:coloured diagram}
\end{figure}
On the other hand,
Figure \ref{FIG:shadow coloured diagram} shows how 
\(R\)-shadow coloured unit diagrams \(E^n_n\)  
correspond to rack \((n + 1)\)-chains. 
\begin{figure}[ht]
\begin{center}
\input{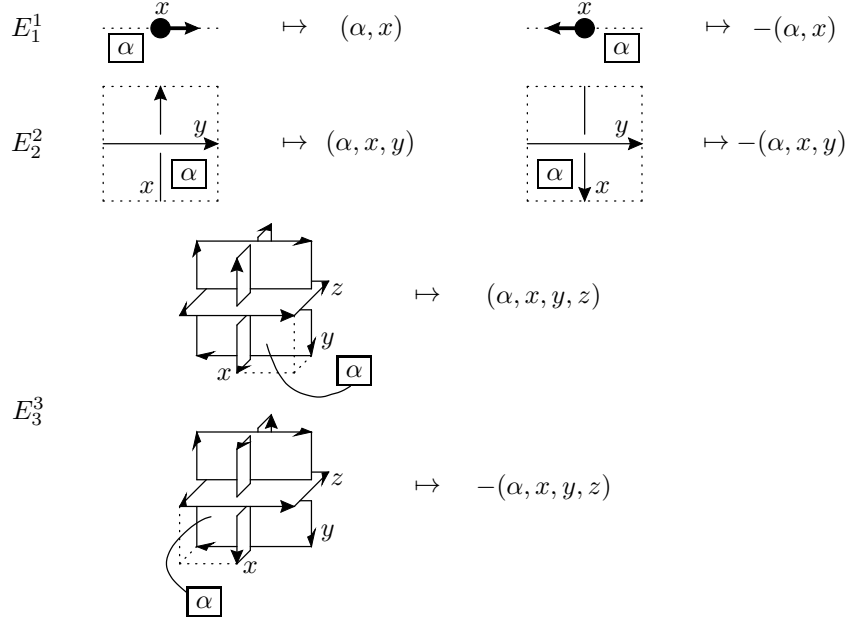}
\end{center}
\caption{Shadow coloured diagrams and corresponding chains.}
\label{FIG:shadow coloured diagram}
\end{figure}
The shadow colour \(\alpha\) 
in Figure \ref{FIG:shadow coloured diagram} is
that of \(r^\ini_p\) of the origin \(p\). 
For an \(R\)-(shadow) coloured unit diagram \(E^n_k\),
denote the corresponding chain by \(\langle E^n_k \rangle\). 

We recall that a diagram \(D\) is the union 
of the images of unit diagrams 
\(E_1\), \ldots, \(E_k\).
Suppose that \(D\) is \(R\)-(shadow) coloured.
Since the (shadow) colouring of \(D\) induces 
that of each unit diagram, 
we can consider a rack chain 
\begin{quote}
\(\langle D \rangle 
= \sum\limits_{i = 1}^k \langle E_i \rangle\) 
\end{quote}
as the corresponding chain of \(D\). 
For a rack chain \(c\), a (shadow) coloured diagram \(D\) 
is said to represent \(c\) 
if the corresponding chain \(\langle D \rangle\) equals to \(c\). \\

The boundary \(\partial D = (\partial M, \partial M \cap D)\) 
of a (shadow) coloured diagram \(D\) 
is also a (shadow) coloured diagram. 
We easily show that the corresponding chain 
\(\langle \partial D \rangle\) of \(\partial D\) is the image of 
\(\langle D \rangle\) via the boundary map of \(C^\rck_\ast(R)\). 
In Figure \ref{FIG:boundary chain}, 
\begin{figure}[ht]
\begin{center}
\input{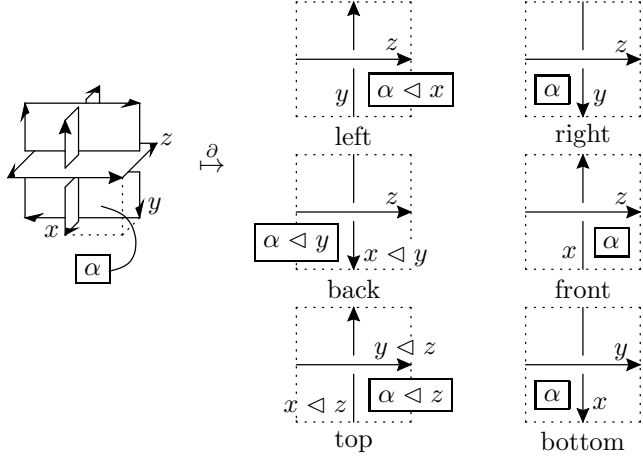}
\end{center}
\caption{Boundary faces of \(E^2_2\).}
\label{FIG:boundary chain}
\end{figure}
we draw a picture of the boundary of \(E^3_3\). 
We can easily check that 
\begin{quote}
\(\partial (x, y, z) = (y, z) - (y, z) - (x \lhd y, z) + (x, z) 
+ (x \lhd z, y \lhd z) - (x, y)\)
\end{quote}
or
\begin{quote}
\(\partial (\alpha, x, y, z) 
= (\alpha \lhd x, y, z) -  (\alpha, y, z)
- (\alpha \lhd y, x \lhd y, z) \\
\hskip60pt + (\alpha, x, z) 
+ (\alpha \lhd z, x \lhd z, y \lhd z) - (\alpha, x, y)\)
\end{quote}
holds in the figure.
Therefore, if a diagram \(D\) is on a closed manifold \(M\), 
the corresponding chain \(\langle D \rangle\) is a rack cycle. 
We have the inverse: 

\begin{theorem}[Carter-Kamada-Saito {[CKS1]}]
\label{TH:realization}
For \(n = 1\), \(2\) and \(3\),
let \(c\) be a rack \(n\)-cycle of \(R\). 
There exists a coloured \(n\)- 
or shadow coloured \((n - 1)\)-diagram \(D\) 
on a closed manifold \(M\) 
such that \(D\) represents \(c\). 
\end{theorem}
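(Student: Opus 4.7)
The plan is to realise the cycle $c$ by first constructing local models inside cubes, one for each basis term, and then gluing their boundary faces according to the cancellations guaranteed by $\partial c = 0$. First I would write $c = \sum_{i=1}^{k}\epsilon_{i}(x_{i,1},\ldots,x_{i,n})$ in the basis (with an extra leading shadow entry $\alpha_{i}$ in the shadow case). For each $i$, take a copy $B_{i}\cong I^{n}$ of the $n$-cube equipped with the unit $n$-diagram $\epsilon_{i}E^{n}_{n}$ (respectively an $(n-1)$-cube equipped with $\epsilon_{i}E^{n-1}_{n-1}$ together with base shadow colour $\alpha_{i}$), coloured so that $\langle B_{i}\rangle$ equals $\epsilon_{i}(x_{i,1},\ldots,x_{i,n})$ under the correspondence established in the preceding figures. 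The disjoint union $(M_{0},D_{0})=(\bigsqcup B_{i},\bigsqcup P_{i})$ is then a (shadow) coloured diagram on a compact manifold-with-boundary representing $c$.

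Next I would use the cycle condition: by the boundary formula already verified, $\langle\partial(M_{0},D_{0})\rangle = \partial c = 0$, so the non-trivial boundary faces, which are coloured unit $(n-1)$-diagrams of type $E^{n-1}_{n-1}$, occur in matched $\pm$ pairs sharing identical colour data. I pair each such $+$ face with a $-$ partner and apply the attaching map defined earlier, i.e.\ an orientation-reversing homeomorphism of underlying cubes preserving the levels, orientations and colours of the contained components. After these identifications the resulting diagram $(M_{1},D_{1})$ still represents $c$, but $\partial M_{1}$ carries only trivial faces of the form $E^{n-1}_{k}$ with $k<n-1$.

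These residual faces contribute $0$ to the chain, so I am free to pair them arbitrarily and glue, possibly after attaching auxiliary trivial unit cubes $E^{n}_{k}$ (which also contribute $0$), until no boundary remains. Because the ambient dimension is at most three, standard topological moves (capping circles with disks, $2$-spheres with balls, filling $1$-handles) ensure this completion is always possible, and at each stage the chain stays equal to $c$. The main obstacle I expect is the pairing step: one must check that the abstract $\pm$ matching forced by $\partial c = 0$ can be realised by concrete attaching maps which simultaneously respect the orientation conventions (orientation-reversing on odd-dimensional faces, orientation-preserving on even-dimensional ones), the level structure of components at double and triple points, and, in the shadow case, the assignment of shadow colours. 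The last is the subtlest: after gluing one must verify that the shadow colour is well-defined on the newly merged regions, which follows because $\partial c = 0$ makes the holonomy of shadow-colour propagation via axiom (SC) around any new closed loop trivial, so a global shadow colouring extends uniquely once a basepoint region is fixed.
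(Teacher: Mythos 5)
Your proposal follows essentially the same route as the paper: write \(c\) as a signed sum of basis terms, realise each term by a (shadow) coloured unit diagram, and use \(\partial c = 0\) to match the boundary faces in cancelling \(\pm\) pairs, which are then identified by the attaching maps to produce a closed manifold carrying a diagram representing \(c\). The only divergence is your extra step for ``residual trivial faces,'' which is vacuous here, since every face of a unit \(E^n_n\) (resp.\ shadow coloured \(E^{n-1}_{n-1}\)) diagram is again of top type, so the first round of pairings already closes up the manifold, exactly as in the paper's proof.
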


\begin{proof}
We prove this only for the case that 
a rack \(3\)-cycle is represented by 
a shadow coloured \(2\)-diagram. 
See [CKS1] for the precise proof. 

A \(3\)-cycle \(c\) is written in the form of 
\begin{quote}
\(\sum\limits_{i = 1}^k \epsilon_i(\alpha_i, x_i, y_i)\), 
\end{quote}
where \(\epsilon_i\) is \(+ 1\) or \(- 1\), 
and \(\alpha_i\), \(x_i\) and \(y_i \in R\). 
Let \(E_1\), \ldots, \(E_k\) be copies of 
the unit \(2\)-diagram \(E^2_2\) 
and give them \(R\)-shadow colourings and signatures 
so that each \(E_i\) represents \(\epsilon_i (\alpha_i, x_i, y_i)\). 
There are \(4k\) faces of \(E_1\), \ldots, \(E_k\). 
Since \(\partial c = 0\), 
we can name the faces as \(F_1\), \ldots, \(F_{2k}\), 
\(F'_1\), \ldots, \(F'_{2k}\) such that 
\(\langle F_j \rangle + \langle F'_j \rangle = 0\) for each \(j\). 
Therefore, by attaching \(F_j\) and \(F'_j\) canonically,
we obtain a diagram \(D\) on an oriented closed surface \(M\) 
such that \(\langle D \rangle = c\) holds.
\end{proof}

If an \(R\)-(shadow) coloured diagram \(D\) 
represents a rack cycle, 
we denote by \([D]\) the rack homology class 
which \(\langle D \rangle\) belongs to.

\subsection{Knot quandles and their presentations.}
\label{SEC:knot quandle}

Let \(D\) be a regular projection 
of an oriented link \(L\) 
on \(\sphere^2\), or a link diagram of \(L\). 
It is clear that \(D\) is a diagram on \(\sphere^2\) 
in the sense of \S \ref{SEC:diagram}. 

Joyce [J] defined the knot quandles topologically, 
but he proved in the same paper that 
the knot quandles can be defined 
through Wirtinger presentations. 
We will use the second definition of knot quandles. 
The symbols here follow that in \S \ref{SEC:colouring}. 

The knot quandle \(Q(L)\) of a link \(L\) is 
generated by \(\mathcal{C}(D)\) with relations 
\begin{quote}
\(u^\ini_p \lhd o_p = u^\ter_p\)
\end{quote}
for each double point (i.e., crossing) \(p\). 
It is well known that \(Q(L)\) completely determines 
the unoriented link type of \(L\).
Directly from the definition, 
the diagram \(D\) can be considered 
as coloured by \(Q(L)\). 
We call this colouring the canonical colouring 
of a link diagram \(D\). 

Since the knot group \(\pi(L)\) of \(L\) 
has the Wirtinger presentation obtained from 
that of \(Q(L)\) by replacing \(u^\ini_p \lhd o_p\) 
with \(o_p^{- 1} u^\ini_p o_p\), 
the associated group of \(Q(L)\) is equivalent to \(\pi(L)\). 
Notice that the canonical homomorphism in this case is 
induced by the identity map on 
the generating set \(\mathcal{C}(D)\). \\

In [E], the structure of the second quandle 
(co)homology group 
of a knot quandle is determined:

\begin{theorem}[Eisermann {[E]}]
\label{TH:second homology}
Let \(L\) be an \(n\)-component link 
and let \(m\) be the number 
of non-trivial components of \(L\). 
The knot quandle \(Q(L)\) has its second 
quandle (co)homology groups 
\begin{quote}
\(H^\qdl_2(Q(L); \Z) \cong 
H_\qdl^2(Q(L); \Z) \cong \Z^m\).
\end{quote}
\end{theorem}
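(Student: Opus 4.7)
The strategy is to extract an explicit basis of $H_2^\qdl(Q(L);\Z)$ from the longitudes of the non-trivial components of $L$; the cohomology assertion then follows by universal coefficients since $\Z^m$ is torsion-free. Recall from \S\ref{SEC:knot quandle} that the associated group of $Q(L)$ is the link group $\pi(L)$, with Wirtinger meridians $\mu_i$ and longitudes $\lambda_i$ for each component $L_i$. My plan is threefold: (i) manufacture a candidate quandle $2$-cycle $c_i$ from each $\lambda_i$; (ii) show that the classes $[c_i]$ coming from non-trivial components are $\Z$-linearly independent via a linking-number cocycle; (iii) argue that these classes already span $H_2^\qdl(Q(L))$.

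For (i), writing the longitude as $\lambda_i = x_{i,1}^{\epsilon_{i,1}} \cdots x_{i,k_i}^{\epsilon_{i,k_i}}$ in the Wirtinger meridians, I would set $c_i = \sum_j \epsilon_{i,j}\,(x_{i,j}, \mu_i) \in C_2^\qdl(Q(L))$ and verify $\partial c_i = 0$ by a direct computation using the quandle axioms together with the peripheral relation $[\mu_i, \lambda_i] = 1$ in $\pi(L)$. When $L_i$ is a trivial component, its longitude is already trivial in $Q(L)$ and one checks that $c_i$ becomes a quandle boundary. For (ii), for each non-trivial $L_i$ I define a cochain $\phi_i \colon Q(L)^2 \to \Z$ whose value on a pair of Wirtinger generators records the signed count of crossings whose over-arc lies on $L_i$; then $\delta \phi_i = 0$ follows from Reidemeister-III invariance of linking numbers, and the pairing $\phi_i(c_j) = \delta_{ij}$ follows from interpreting $c_j$ as the longitude word of $L_j$. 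This yields a split surjection $H_2^\qdl(Q(L)) \twoheadrightarrow \Z^m$, so the classes $[c_i]$ span a $\Z^m$ subgroup.

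The remaining step (iii) is the upper bound, and is where I expect the work to concentrate. I would invoke the identification of $H_2^\qdl$ along each orbit with an abelianised kernel inside the associated group: for the orbit $\mathcal{O}_i \subset Q(L)$ corresponding to $L_i$ there is a central extension $1 \to A_i \to \widetilde{G}_i \to \pi(L) \to 1$ whose kernel $A_i$ captures exactly the $\mathcal{O}_i$-contribution to $H_2^\qdl(Q(L))$, and direct computation inside the link group identifies $A_i$ with $\Z$ when $L_i$ is non-trivial and with $0$ otherwise. The main obstacle is this last identification: decomposing $H_2^\qdl(Q(L))$ cleanly over the $n$ orbits and verifying that no class beyond a longitudinal combination survives the orbit-wise analysis. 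Via Eisermann's central-extension formalism (or, alternatively, a diagrammatic reduction of arbitrary quandle $2$-cycles to longitudinal ones through Reidemeister moves), this produces the matching upper bound and completes the proof.
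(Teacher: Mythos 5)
First, note that the paper itself offers no proof of this statement: it is imported verbatim from Eisermann [E], so the only meaningful comparison is with Eisermann's argument, whose overall skeleton (longitudinal classes generate, an orbit-wise decomposition, and an identification of each orbit's contribution with peripheral data) your plan does echo. The problem is that the two steps you treat as elementary both break down. In step (i), your chain \(c_i=\sum_j \epsilon_{i,j}(x_{i,j},\mu_i)\) is not a cycle: with the boundary convention of this paper (and of [CJKLS]) one has \(\partial_2(x,y)=(x\lhd y)-(x)\), so \(\partial c_i=\sum_j\epsilon_{i,j}\{(x_{i,j}\lhd\mu_i)-(x_{i,j})\}\), and the peripheral relation \([\mu_i,\lambda_i]=1\) only says the \emph{product} of the letters commutes with \(\mu_i\); it does not make conjugation by \(\mu_i\) permute the letters. (For the trefoil with \(a\lhd b=c\), \(b\lhd c=a\), \(c\lhd a=b\) and \(\mu=a\), the boundary contains \((b\lhd a)-(c)\neq 0\).) The correct longitudinal cycle has the coordinates the other way around: it is the component-\(i\) part of the diagram class, \(\sum_j\pm(m_j,x_{i,j})\), where the first entry is the meridian transported along \(L_i\) (exactly the shadow-coloured \(1\)-diagram of Lemma \ref{LEM:shadow 1-diagram}, which closes up \emph{because} \(\lambda_i\) commutes with \(\mu_i\)); that sum telescopes, your version does not.

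The more serious gap is step (ii). The cochain ``count crossings whose over-arc lies on \(L_i\)'' is the function \(\phi_i(x,y)=[\,y\in\mathcal{O}_i\,]\); it is a rack cocycle but not a quandle cocycle, since \(\phi_i(x,x)\neq 0\) on the \(i\)-th orbit, and even as a rack cocycle its pairing with \(c_j\) is the linking number \(\mathrm{lk}(L_i,L_j)\) for \(i\neq j\) and a framing-dependent writhe for \(i=j\) — not \(\delta_{ij}\). In particular for a single nontrivial knot \(K\) every such counting cocycle pairs trivially (or non-invariantly) with the fundamental class, so this device cannot even show \(H^\qdl_2(Q(K))\neq 0\), which is precisely the content of Eisermann's theorem. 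Detecting the self-classes genuinely requires the topological input that \(\lambda_i\neq 1\) in \(\pi(L)\) for a non-trivial component (Dehn's lemma / the loop theorem) combined with the algebraic identification of the \(\mathcal{O}_i\)-contribution of \(H_2\) with the peripheral quotient \(\langle\mu_i,\lambda_i\rangle/\langle\mu_i\rangle\); there is no elementary combinatorial cocycle doing the job. Your step (iii) is where that identification would live, and as you concede it is deferred entirely to Eisermann's extension formalism — so as it stands the proposal establishes neither the lower nor the upper bound, only the (correct, and correctly attributed) strategy. The cohomology statement via universal coefficients is fine, provided you note that the relevant Ext term vanishes because \(H^\qdl_1(Q(L))\cong\Z^n\) is free.
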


\begin{remark}
\label{REM:fundamental class}
As noticed,
since \(D\) is a \(Q(L)\)-coloured 
\(2\)-diagram on \(\sphere^2\), 
it represents a \(2\)-cycle \(\langle D\rangle\) of \(Q(L)\). 
We call the second homology class \([D]\) 
corresponding to \(\langle D\rangle\)
the \textbf{diagram class} of \(D\), 
and call its image \([L]\) via \(\rho_\ast\): 
\(H^\rck_2(Q(L)) \to H^\qdl_2(Q(L))\) 
the \textbf{fundamental class} of \(L\).

It is shown in [E] that the fundamental class is 
uniquely determined, and, if \(L\) is non-trivial,
\([L]\) is proved to be a non-zero element 
of \(H^\qdl_2(Q(L))\).
Moreover, when \(L\) is a non-trivial knot,
\(H^\qdl_2(Q(L))\) is shown to be \(\Z[L]\). 
\end{remark}

\section{Lemmas on coloured diagrams.}
\label{CHAP:lemma}

Though our purpose is to construct 
third homology classes of knot quandles, 
\S \ref{CHAP:lemma} is devoted to lemmas 
on \(1\)-cycles and on \(2\)-chains 
of quandles in general,
which play important roles in the later sections.

\subsection{Shadow colourability of \(1\)-diagrams.}
\label{SEC:1-diagram}

Let \(Q\) be a quandle. 
Though we are concerned with rack \(1\)-cycles, 
it is useful to suppose \(Q\) to be a quandle here,
for the associated group \(G_Q\) is considered.
As already mentioned (Theorem \ref{TH:realization}), 
a rack \(1\)-cycle \(c \in Z^\rck_1(Q)\) can be 
represented by a \(Q\)-coloured \(1\)-diagram 
on a closed \(1\)-manifold,
that is, by a diagram on a disjoint union of copies 
of \(\sphere^1\).
Shadow colourability can be considered on 
each circle independently. 
So it is sufficient only to consider diagrams 
on \(\sphere^1\). 

Let \(D\) be a \(Q\)-coloured \(1\)-diagram on \(\sphere^1\). 
By reading colours and signatures of \(D\) 
along \(\sphere^1\) starting from its basepoint, 
we obtain two sequences \((x_1, \ldots, x_n)\) 
of elements of \(Q\) 
and \((\epsilon_1, \ldots, \epsilon_n)\) of \(\pm 1\). 
Denote by \(\Pi(D)\) the product 
\(x_1^{\epsilon_1} \cdots x_n^{\epsilon_n}\) 
in the associated group \(G_Q\). 

\begin{lemma}
\label{LEM:shadow 1-diagram}
Let \(\alpha\) be an element of \(Q\). 
If a \(Q\)-coloured \(1\)-diagram \(D\) on \(\sphere^1\) 
has a \(Q\)-shadow colouring 
such that the shadow colour of the base-region is \(\alpha\), 
then \(\Pi(D)\) commutes with \(\alpha\). 

Moreover, when the canonical map \(Q \to G_Q\) 
is injective, the inverse holds.
\end{lemma}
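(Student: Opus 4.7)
The plan is to parametrise the regions of \(\sphere^1 \setminus D\) in the cyclic order encountered when travelling around \(\sphere^1\) from the base-region, and then track how a shadow colour is forced to propagate, comparing the final value to the initial one via the defining relations of \(G_Q\).

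First I would label the regions \(r_0, r_1, \ldots, r_{n-1}\) in order starting from the base-region \(r_0\), with the point between \(r_{i-1}\) and \(r_i\) carrying colour \(x_i\) and signature \(\epsilon_i\) (indices taken cyclically). Setting \(\alpha_0 := \alpha\) and \(\alpha_i := \alpha_{i-1} \lhd^{\epsilon_i} x_i\), the shadow-colouring axiom (SC) forces \(C'(r_i) = \alpha_i\) in \(Q\), so the existence of a shadow colouring with \(C'(r_0) = \alpha\) is equivalent to the single identity \(\alpha_n = \alpha\) in \(Q\) (closing up around the circle).

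Next I would push this identity into the associated group \(G_Q\). Using the relations \(b^{-1} a b = a \lhd b\) and \(b a b^{-1} = a \blhd b\), a straightforward induction on \(i\) gives
\[
\alpha_i = (x_1^{\epsilon_1} \cdots x_i^{\epsilon_i})^{-1}\, \alpha\, (x_1^{\epsilon_1} \cdots x_i^{\epsilon_i})
\]
as elements of \(G_Q\) (after applying the canonical map \(Q \to G_Q\)). In particular \(\alpha_n = \Pi(D)^{-1} \alpha\, \Pi(D)\) in \(G_Q\). The forward implication is then immediate: if a shadow colouring extending \(\alpha\) exists, then \(\alpha_n = \alpha\) holds in \(Q\), hence in \(G_Q\), so \(\Pi(D)^{-1} \alpha\, \Pi(D) = \alpha\), i.e.\ \(\Pi(D)\) commutes with \(\alpha\).

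For the converse, assume \(Q \to G_Q\) is injective and \(\alpha \Pi(D) = \Pi(D) \alpha\). Define \(C'(r_i) := \alpha_i\) as above; this is an element of \(Q\) by construction and satisfies (SC) at every non-base point automatically. The only thing to check is consistency at the base-region, i.e.\ \(\alpha_n = \alpha\) in \(Q\). The computation above shows that both sides have the same image in \(G_Q\), so injectivity promotes this to an equality in \(Q\), making \(C'\) a well-defined \(Q\)-shadow colouring. The main obstacle is purely bookkeeping: being careful with the signs \(\epsilon_i\) when inductively rewriting each \(\lhd^{\epsilon_i} x_i\) as conjugation in \(G_Q\); the structural content of the lemma is then just the translation of ``closing up around \(\sphere^1\)'' into ``conjugating by \(\Pi(D)\) fixes \(\alpha\)''.
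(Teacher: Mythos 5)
Your proposal is correct and follows essentially the same route as the paper's proof: reading off the cyclic identity \(( \cdots (\alpha \lhd^{\epsilon_1} x_1) \cdots ) \lhd^{\epsilon_n} x_n = \alpha\) from the shadow colouring, translating it into the conjugation \(\Pi(D)^{-1}\alpha\,\Pi(D) = \alpha\) in \(G_Q\), and using injectivity of \(Q \to G_Q\) to pull the closing-up condition back to \(Q\) for the converse. The only difference is that you spell out the induction and region indexing explicitly, which the paper leaves implicit.
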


\begin{proof}
Suppose that \(D\) is \(Q\)-shadow coloured 
as in the statement. 
Obviously, we obtain an equation 
\begin{quote}
\(( \cdots (\alpha \lhd^{\epsilon_1} x_1) 
\cdots ) \lhd^{\epsilon_n} x_n = \alpha\) 
\end{quote}
(see Figure \ref{FIG:shadow 1-diagram}). 
It follows that 
\((x_1^{\epsilon_1} \cdots x_n^{\epsilon_n})^{- 1} 
\alpha (x_1^{\epsilon_1} \cdots x_n^{\epsilon_n}) = \alpha\) 
holds in \(G_Q\), that is, \(\Pi(D)\) commutes with \(\alpha\). 
\begin{figure}[ht]
\begin{center}
\unitlength 0.1in%
\begin{picture}(40,14.6)(10.5,-16.0)%
%
\special{pn 8}%
\special{ar 2400 996 600 600  1.8925469 6.2831853}%
\special{ar 2400 996 600 600  0.0000000 1.2490458}%
%
\put(24.0000,-16){\makebox(0,0){\(\cdots\)}}%
%
\put(24.0000,-10){\makebox(0,0){\(\circlearrowleft\)}}%
%
\special{pn 8}%
\special{sh 1}%
\special{pa 2410 420}%
\special{pa 2390 420}%
\special{pa 2390 370}%
\special{pa 2410 370}%
\special{pa 2410 420}%
\special{fp}%
%
\put(24.0000,-5.2000){\makebox(0,0){\(\ast\)}}%
%
\put(24.0000,-2.5){\makebox(0,0){\fbox{\(\alpha\)}}}%
%
\special{pn 8}%
\special{sh 1}%
\special{ar 1950 600 50 50  0.0000000 6.2831853}%
%
\special{pn 8}%
\special{sh 1}%
\special{ar 2850 600 50 50  0.0000000 6.2831853}%
%
\special{pn 8}%
\special{sh 1}%
\special{ar 2850 1400 50 50  0.0000000 6.2831853}%
%
\special{pn 8}%
\special{sh 1}%
\special{ar 1950 1400 50 50  0.0000000 6.2831853}%
%
\put(16.0000,-6.0000){\makebox(0,0){\((x_1, \epsilon_1)\)}}%
\put(16.0000,-14.0000){\makebox(0,0){\((x_2, \epsilon_2)\)}}%
\put(33.0000,-14.0000){\makebox(0,0){\((x_{n - 1}, \epsilon_{n - 1})\)}}%
\put(32.0000,-6.0000){\makebox(0,0){\((x_n, \epsilon_n)\)}}%
%
\put(14.0000,-10.0000){\makebox(0,0)%
{\fbox{\(\alpha \lhd^{\epsilon_1} x_1\)}}}%
%
\put(40.5,-10.0000){\makebox(0,0)%
{\fbox{\(( \cdots (\alpha \lhd^{\epsilon_1} x_1) %
\cdots ) \lhd^{\epsilon_{n - 1}} x_{n - 1}\)}}}%
\end{picture}%
\end{center}
\caption{Shadow coloured \(1\)-diagram.}
\label{FIG:shadow 1-diagram}
\end{figure}
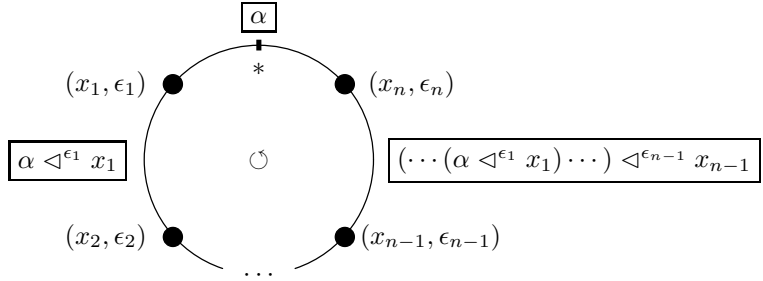

Inversely, if \(\Pi(D)\) commutes with \(\alpha\)
in \(G_Q\), the elements of \(Q\),
\(( \cdots (\alpha \lhd^{\epsilon_1} x_1) 
\cdots ) \lhd^{\epsilon_n} x_n\)
and \(\alpha\), map to the same element of \(G_Q\).
Therefore, when the canonical map \(Q \to G_Q\)
supposed to be injective, \(( \cdots (\alpha \lhd^{\epsilon_1} x_1) 
\cdots ) \lhd^{\epsilon_n} x_n = \alpha\) holds also in \(Q\).
Then, by giving shadow colours to regions along \(\sphere^1\),
we obtain a \(Q\)-shadow colouring of \(D\) 
without contradiction. 
\end{proof}

We denote by \(D_\alpha\) the \(Q\)-shadow coloured 
diagram as in Lemma \ref{LEM:shadow 1-diagram}. 
In other dimensional cases, if a whole manifold \(M\) is 
connected, we also denote by \(D_\alpha\) 
the \(Q\)-shadow coloured diagram which is obtained 
from a \(Q\)-coloured diagram \(D\) by colouring 
the base-region with \(\alpha\). 
A \(Q\)-coloured diagram \(D\) is called to be 
\textbf{freely \(Q\)-shadow colourable} 
if \(D_\alpha\) exists for any \(\alpha \in Q\). 
As a consequence of Lemma \ref{LEM:shadow 1-diagram}, 
we obtain: 

\begin{corollary}
\label{COR:free shadow}
If a \(Q\)-coloured \(1\)-diagram \(D\) 
on \(\sphere^1\) is freely \(Q\)-shadow col\-our\-able,
\(\Pi(D)\) belongs to the centre of \(G_Q\).
\end{corollary}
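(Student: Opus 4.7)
The plan is to read this off directly from Lemma \ref{LEM:shadow 1-diagram} together with the fact that \(Q\) generates the associated group \(G_Q\). Concretely, since \(D\) is freely \(Q\)-shadow colourable, for every \(\alpha \in Q\) there exists a shadow colouring \(D_\alpha\) whose base-region is coloured by \(\alpha\). Applying the lemma to each such \(D_\alpha\) yields that \(\Pi(D)\) commutes with (the image of) \(\alpha\) in \(G_Q\), for every \(\alpha \in Q\).

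Next I would invoke the defining presentation of \(G_Q\): by construction \(G_Q\) is generated by the image of the canonical map \(Q \to G_Q\). Since commutation with a fixed element is preserved under multiplication and inversion, the centraliser of \(\Pi(D)\) in \(G_Q\) is a subgroup containing a generating set, hence equals all of \(G_Q\). Therefore \(\Pi(D)\) lies in the centre \(Z(G_Q)\).

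There is no real obstacle here: the content of the corollary is essentially the contrapositive-free direction of Lemma \ref{LEM:shadow 1-diagram} applied uniformly over \(\alpha\), combined with the generation statement for \(G_Q\). Note in particular that the injectivity hypothesis on \(Q \to G_Q\) is not needed for this direction, since we only use the first half of the lemma. The only subtle point worth mentioning explicitly in the writeup is that \(\Pi(D)\) itself is well defined as an element of \(G_Q\) independently of the shadow colouring chosen, so the conclusion depends only on \(D\) and not on any particular \(D_\alpha\).
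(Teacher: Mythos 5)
Your proposal is correct and is exactly the argument the paper intends: the corollary is stated as an immediate consequence of Lemma \ref{LEM:shadow 1-diagram}, obtained by applying the lemma to \(D_\alpha\) for every \(\alpha \in Q\) and using that the image of \(Q\) generates \(G_Q\), so the centraliser of \(\Pi(D)\) is all of \(G_Q\). Your observations that only the forward direction of the lemma is needed (so injectivity of \(Q \to G_Q\) plays no role) and that \(\Pi(D)\) is independent of the shadow colouring are accurate and consistent with the paper.
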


We have seen the facts on the shadow 
colourability of \(1\)-diagrams.
Before dealing with the shadow colourability of 
\(2\)-diagrams in \S \ref{SEC:2-diagram},
we focus on the way to construct a coloured 
\(2\)-diagram which connects rack homologous 
\(1\)-cycles.

\begin{lemma}
\label{LEM:cobordism 1-diagram}
Let \(D\) and \(D'\) be 
\(Q\)-coloured \(1\)-diagrams on \(\sphere^1\). 
There exists a \(Q\)-coloured \(2\)-diagram \(\tilde{D}\) 
on an annulus \(\sphere^1 \times I\) such that 
\(\partial \tilde{D} = D \cup (- D')\), 
if and only if \(\Pi(D)\) and \(\Pi(D')\) are conjugate in \(G_Q\). 
\end{lemma}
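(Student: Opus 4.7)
The plan is to read the statement as a cobordism result: the conjugacy class of $\Pi(D) \in G_Q$ is exactly the invariant of a $Q$-coloured 1-diagram on $\sphere^1$ that detects bordism through $Q$-coloured 2-diagrams on $\sphere^1 \times I$.

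For the ``only if'' direction, I would analyse $\tilde{D}$ as a movie. Put $\tilde{D}$ in general position with respect to the height function $h : \sphere^1 \times I \to I$, so that $h|_{\tilde{D}}$ is Morse and its critical points are of just two types: a local extremum of an arc of $\tilde{D}$, or a transverse double point of $\tilde{D}$. For generic $t$ the horizontal slice $h^{-1}(t) \cap \tilde{D}$ is a $Q$-coloured 1-diagram $D_t$ on $\sphere^1 \times \{t\}$, and the two extreme slices recover $D$ and $D'$ (up to the orientation conventions on $\partial (\sphere^1 \times I)$). It is enough to check that $\Pi(D_t) \in G_Q$ changes only by conjugation as $t$ crosses a critical value. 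At a local maximum or minimum, two consecutive entries with opposite signatures and the same colour $x$ are created or destroyed, so the product picks up or loses the factor $x x^{-1} = 1$. At a double point, two adjacent entries $(x,\epsilon)$, $(y,\delta)$ exchange positions and the colour of the under-arc is modified by $\lhd^{\pm 1} y$; the defining relation $b^{-1} a b = a \lhd b$ in $G_Q$ then makes the swap preserve the product exactly. Finally, moving the basepoint of $\sphere^1$ only conjugates $\Pi$. Concatenating these local observations across all critical heights gives the claim.

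For the ``if'' direction, assume $\Pi(D) = g\,\Pi(D')\,g^{-1}$ in $G_Q$. Since $G_Q$ has the presentation $\langle\, Q \mid b^{-1} a b\,(a \lhd b)^{-1},\ b\,a\,b^{-1}\,(a \blhd b)^{-1} \,\rangle$, this equality is witnessed by a finite sequence of elementary moves on words: insertion or cancellation of an adjacent pair $x x^{-1}$, application of a defining relation, and cyclic shift. Each such move can be realised by a local piece of a 2-diagram on an annulus: pair-creation by a cap (a bent arc built from suitably attached $E^2_1$ pieces), a defining relation by an $E^2_2$ crossing in which the over-arc is coloured $b$ and the under-arc changes colour from $a$ to $a \lhd b$, and a cyclic shift by rotating the annulus. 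Stacking these pieces from bottom to top in $\sphere^1 \times I$, starting with $D$ on the lower boundary, produces $\tilde{D}$ with $\partial \tilde{D} = D \cup (-D')$; the colouring condition $u^\ini_p \lhd o_p = u^\ter_p$ at each crossing of $\tilde{D}$ is exactly the defining relation invoked at the corresponding step, so the $Q$-colouring is automatically consistent.

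The main obstacle will be the sign bookkeeping in the forward direction: one must track how the signatures $\epsilon_i$ are created and destroyed in opposite-signature pairs at extrema, and how the choice between $\lhd$ and $\blhd$ at a crossing is pinned down by the relative orientations of the two arcs. A small case analysis over the four possible sign configurations at a crossing should show that the product in $G_Q$ really is preserved in every case, at which point the proof assembles as above.
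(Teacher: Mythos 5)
Your proposal takes essentially the same route as the paper: the paper realises conjugacy in \(G_Q\) by a finite sequence of three elementary word moves (free insertion/cancellation, the defining relation of the associated group, and cyclic shift implemented by a strand crossing the seam \(\{\ast\} \times I\)), each realised by a local subdiagram stacked in \(\sphere^1 \times I\), and proves the converse by decomposing any annular \(2\)-diagram into exactly these pieces, which is what your Morse-movie slicing produces. The sign and orientation bookkeeping you flag as the main obstacle is handled in the paper by the figures and the variant crossing move of Remark \ref{REM:revised operation}, just as your case analysis anticipates.
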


\begin{proof}
As in Lemma \ref{LEM:shadow 1-diagram}, 
let \((x_1, \ldots, x_n)\) and \((\epsilon_1, \ldots, \epsilon_n)\) 
be the colours and the signatures of \(D\), 
and let \((y_1, \ldots, y_m)\) and \((\delta_1, \ldots, \delta_m)\) 
be those of \(D'\). 
If \(\Pi(D)\) and \(\Pi(D')\) are conjugate, 
two words \(x_1^{\epsilon_1} \cdots x_n^{\epsilon_n}\) 
and \(y_1^{\delta_1} \cdots y_m^{\delta_m}\) are 
connected by a finite sequence of replacing operations 
each of which is one of three types shown later. 
There exist subdiagrams on \(\sphere^1 \times I\) 
that correspond to these operations, 
and by gluing them we obtain a new diagram \(\tilde{D}\). 
As the orientation of its boundary \(\sphere^1 \times \{\pm 1\}\) 
is considered, \(\partial \tilde{D}\) is 
the disjoint union of two diagrams \(D\) and \(- D'\). 

To prove ``only if'' part, 
we recall the way to decompose 
tangles or braids into fundamental diagrams. 
By a similar argument, 
we obtain decomposability of \(1\)-diagrams 
on \(\sphere^1 \times I\)
into subdiagrams as depicted 
in Figures \ref{FIG:operation type1}, 
\ref{FIG:operation type2} 
and \ref{FIG:operation type3}. 
Therefore, directly from this fact, 
we see that \(\Pi(D)\) and \(\Pi(D')\) are conjugate.

Now we will construct subdiagrams 
which correspond to the replacing operations. 
In the figures, 
\(w_1\), \(w_2\) and \(w\) are words, 
and \(a\) and \(b\) are elements of \(Q\). 
Each rectangle is considered to be an annulus 
as identified the left and the right edges. \\

\noindent
1) Figure \ref{FIG:operation type1} shows 
an operation \(w_1 a^{\pm 1} a^{\mp 1} w_2 
\longleftrightarrow w_1 w_2\). 
The indices \(\pm 1\) of \(a\)'s are their signatures, 
which are determined by the orientation of 
the arc adjacent to them. 
We also allow the diagram turned upside down. 
\begin{figure}[ht]
\begin{center}
\unitlength 0.1in%
\begin{picture}(16.0800,10.3)(1.9200,-12.1)%
%
\special{pn 8}%
\special{pa 200 400}%
\special{pa 1800 400}%
\special{pa 1800 1000}%
\special{pa 200 1000}%
\special{pa 200 400}%
\special{dt 0.045}%
%
\special{pn 8}%
\special{pa 300 400}%
\special{pa 300 1000}%
\special{fp}%
%
\put(5.0000,-7.0000){\makebox(0,0){\(\cdots\)}}%
%
\special{pn 8}%
\special{pa 700 400}%
\special{pa 700 1000}%
\special{fp}%
%
\put(5.0000,-11.0000){\makebox(0,0){\(\underbrace{\hskip30pt}\)}}%
%
\put(5.0000,-12.0000){\makebox(0,0){\(w_1\)}}%
%
\special{pn 8}%
\special{pa 1300 400}%
\special{pa 1300 1000}%
\special{fp}%
%
\put(15.0000,-7.0000){\makebox(0,0){\(\cdots\)}}%
%
\special{pn 8}%
\special{pa 1700 400}%
\special{pa 1700 1000}%
\special{fp}%
%
\put(15.0000,-11.0000){\makebox(0,0){\(\underbrace{\hskip30pt}\)}}%
%
\put(15.0000,-12.0000){\makebox(0,0){\(w_2\)}}%
%
\special{pn 8}%
\special{pa 800 400}%
\special{pa 800 600}%
\special{fp}%
%
\special{pn 8}%
\special{pa 1200 400}%
\special{pa 1200 600}%
\special{fp}%
%
\special{pn 8}%
\special{ar 1000 600 200 200  6.2831853 6.2831853}%
\special{ar 1000 600 200 200  0.0000000 3.1415927}%
%
\special{pn 8}%
\special{sh 1}%
\special{ar 800 400 40 40  0.0000000 6.2831853}%
%
\put(8.9000,-2.8000){\makebox(0,0){\(a^{\pm 1}\)}}%
%
\special{pn 8}%
\special{sh 1}%
\special{ar 1200 400 40 40  0.0000000 6.2831853}%
%
\put(12.9000,-2.8000){\makebox(0,0){\(a^{\mp 1}\)}}%
\end{picture}%
\end{center}
\caption{Replacing operation of type I.}
\label{FIG:operation type1}
\end{figure}
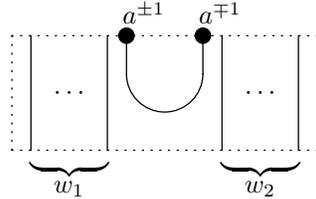

\noindent
2) The operation of type II comes from 
the definition of associated group. 
Replacing such as
\begin{quote}
\(w_1 (a \lhd^\epsilon b)^{\pm 1} w_2 
\longleftrightarrow w_1 b^{- \epsilon} a^{\pm 1} b^{\epsilon} w_2\) 
\end{quote}
is considered, where \(\epsilon = \pm 1\). 
The corresponding subdiagram 
is depicted in Figure \ref{FIG:operation type2}, 
where \(\epsilon\) is the signature of the crossing.
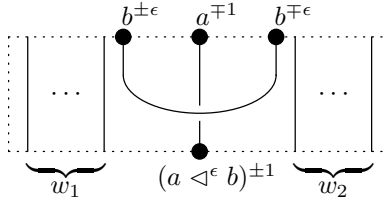
\begin{figure}[ht]
\begin{center}
\unitlength 0.1in%
\begin{picture}(20.0800,10.3)(1.9200,-12.1)%
%
\special{pn 8}%
\special{pa 200 400}%
\special{pa 2200 400}%
\special{pa 2200 1000}%
\special{pa 200 1000}%
\special{pa 200 400}%
\special{dt 0.045}%
%
\special{pn 8}%
\special{pa 300 400}%
\special{pa 300 1000}%
\special{fp}%
%
\put(5.0000,-7.0000){\makebox(0,0){\(\cdots\)}}%
%
\special{pn 8}%
\special{pa 700 400}%
\special{pa 700 1000}%
\special{fp}%
%
\put(5.0000,-11.0000){\makebox(0,0){\(\underbrace{\hskip30pt}\)}}%
%
\put(5.0000,-12.0000){\makebox(0,0){\(w_1\)}}%
%
\special{pn 8}%
\special{pa 1700 400}%
\special{pa 1700 1000}%
\special{fp}%
%
\put(19.0000,-7.0000){\makebox(0,0){\(\cdots\)}}%
%
\special{pn 8}%
\special{pa 2100 400}%
\special{pa 2100 1000}%
\special{fp}%
%
\put(19.0000,-11.0000){\makebox(0,0){\(\underbrace{\hskip30pt}\)}}%
%
\put(19.0000,-12.0000){\makebox(0,0){\(w_2\)}}%
%
\special{pn 8}%
\special{pa 1200 400}%
\special{pa 1200 1000}%
\special{fp}%
%
\special{pn 8}%
\special{sh 0}%
\special{ia 1200 800 40 40  0.0000000 6.2831853}%
%
\special{pn 8}%
\special{sh 1}%
\special{ar 1200 400 40 40  0.0000000 6.2831853}%
%
\special{pn 8}%
\special{sh 1}%
\special{ar 1200 1000 40 40  0.0000000 6.2831853}%
%
\put(12.9000,-2.8000){\makebox(0,0){\(a^{\mp 1}\)}}%
%
\put(12.9000,-11.4000){\makebox(0,0){\((a \lhd^\epsilon b)^{\pm 1}\)}}%
%
\special{pn 8}%
\special{pa 800 400}%
\special{pa 800 600}%
\special{fp}%
%
\special{pn 8}%
\special{pa 1600 400}%
\special{pa 1600 600}%
\special{fp}%
%
\special{pn 8}%
\special{ar 1200 600 400 200  6.2831853 6.2831853}%
\special{ar 1200 600 400 200  0.0000000 3.1415927}%
%
\special{pn 8}%
\special{sh 1}%
\special{ar 1600 400 40 40  0.0000000 6.2831853}%
%
\special{pn 8}%
\special{sh 1}%
\special{ar 800 400 40 40  0.0000000 6.2831853}%
%
\put(8.9000,-2.8000){\makebox(0,0){\(b^{\pm \epsilon}\)}}%
%
\put(16.9000,-2.8000){\makebox(0,0){\(b^{\mp \epsilon}\)}}%
\end{picture}%
\end{center}
\caption{Replacing operation of type II.}
\label{FIG:operation type2}
\end{figure}

\noindent
3) The last operation is conjugation, 
that is, 
\(w a^{\pm 1} \longleftrightarrow a^{\pm 1} w\). 
It corresponds to a subdiagram 
where a component crosses a line \(\{\ast\} \times I 
\subset \sphere^1 \times I\) as drawn 
in Figure \ref{FIG:operation type3}.
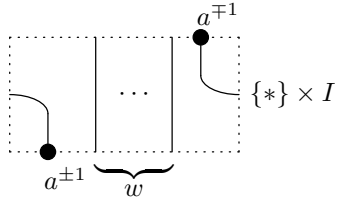
\begin{figure}[ht]
\begin{center}
\unitlength 0.1in%
\begin{picture}(18.0800, 10.2000)(1.9200,-12.3000)%
%
\special{pn 8}%
\special{pa 1400 400}%
\special{pa 200 400}%
\special{pa 200 1000}%
\special{pa 1400 1000}%
\special{pa 1400 400}%
\special{dt 0.045}%
%
\put(16.8,-7.0000){\makebox(0,0){\(\{\ast\} \times I\)}}%
%
\special{pn 8}%
\special{pa 650 400}%
\special{pa 650 1000}%
\special{fp}%
%
\put(8.5,-7.0000){\makebox(0,0){\(\cdots\)}}%
%
\special{pn 8}%
\special{pa 1050 400}%
\special{pa 1050 1000}%
\special{fp}%
%
\put(8.5,-11.0000){\makebox(0,0){\(\underbrace{\hskip30pt}\)}}%
%
\put(8.5,-12.0000){\makebox(0,0){\(w\)}}%
%
\special{pn 8}%
\special{ar 200 800 200 100  4.7123890 6.2831853}%
%
\special{pn 8}%
\special{pa 400 800}%
\special{pa 400 1000}%
\special{fp}%
%
\special{pn 8}%
\special{sh 1}%
\special{ar 400 1000 40 40  0.0000000 6.2831853}%
%
\put(4.9000,-11.4000){\makebox(0,0){\(a^{\pm 1}\)}}%
%
\special{pn 8}%
\special{ar 1400 600 200 100  1.5707963 3.1415927}%
%
\special{pn 8}%
\special{pa 1200 400}%
\special{pa 1200 600}%
\special{fp}%
%
\special{pn 8}%
\special{sh 1}%
\special{ar 1200 400 40 40  0.0000000 6.2831853}%
%
\put(12.9000,-2.8000){\makebox(0,0){\(a^{\mp 1}\)}}%
\end{picture}%
\end{center}
\caption{Replacing operation of type III.}
\label{FIG:operation type3}
\end{figure}

Thus we have completed the proof 
of Lemma \ref{LEM:cobordism 1-diagram}.
\end{proof}

\newpage
\begin{remark}
\label{REM:revised operation}
We can exchange the operation of type II with 
\begin{quote}
\(w_1 b^{\epsilon} (a \lhd^{\epsilon} b)^{\pm 1} w_2 
\longleftrightarrow w_1 a^{\pm 1} b^{\epsilon} w_2\)
\end{quote}
and 
\begin{quote}
\(w_1 (a \lhd^{\epsilon} b)^{\pm 1} b^{- \epsilon}  w_2 
\longleftrightarrow w_1 b^{- \epsilon} a^{\pm 1} w_2\). 
\end{quote}
Figure \ref{FIG:revised operation} shows the subdiagram corresponding 
to the first operation, and shows its decomposition into 
the product of subdiagrams of types I and II.
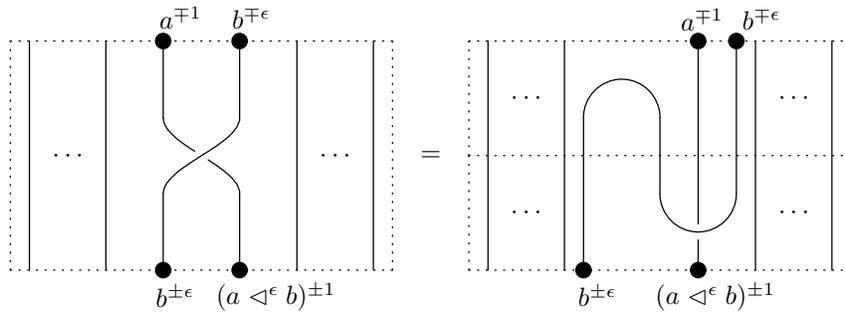
\begin{figure}[ht]
\begin{center}
\unitlength 0.1in%
\begin{picture}( 44.0800,16.3000)(1.9200,-18.1500)%
%
\special{pn 8}%
\special{pa 200 400}%
\special{pa 2200 400}%
\special{pa 2200 1600}%
\special{pa 200 1600}%
\special{pa 200 400}%
\special{dt 0.045}%
%
\special{pn 8}%
\special{pa 300 400}%
\special{pa 300 1600}%
\special{fp}%
%
\put(5.0000,-10.0000){\makebox(0,0){\(\cdots\)}}%
%
\special{pn 8}%
\special{pa 700 400}%
\special{pa 700 1600}%
\special{fp}%
%
\special{pn 8}%
\special{pa 1700 400}%
\special{pa 1700 1600}%
\special{fp}%
%
\put(19.0000,-10.0000){\makebox(0,0){\(\cdots\)}}%
%
\special{pn 8}%
\special{pa 2100 400}%
\special{pa 2100 1600}%
\special{fp}%
%
\special{pn 8}%
\special{pa 1000 400}%
\special{pa 1000 800}%
\special{fp}%
%
\special{pn 8}%
\special{pa 1000 800}%
\special{pa 1000 806}%
\special{pa 1002 810}%
\special{pa 1002 816}%
\special{pa 1002 820}%
\special{pa 1004 826}%
\special{pa 1006 830}%
\special{pa 1008 836}%
\special{pa 1010 840}%
\special{pa 1012 846}%
\special{pa 1016 850}%
\special{pa 1018 856}%
\special{pa 1022 860}%
\special{pa 1026 866}%
\special{pa 1030 870}%
\special{pa 1034 876}%
\special{pa 1038 880}%
\special{pa 1044 886}%
\special{pa 1048 890}%
\special{pa 1054 896}%
\special{pa 1060 900}%
\special{pa 1064 906}%
\special{pa 1070 910}%
\special{pa 1076 916}%
\special{pa 1082 920}%
\special{pa 1090 926}%
\special{pa 1096 930}%
\special{pa 1102 936}%
\special{pa 1110 940}%
\special{pa 1116 946}%
\special{pa 1124 950}%
\special{pa 1132 956}%
\special{pa 1138 960}%
\special{pa 1146 966}%
\special{pa 1154 970}%
\special{pa 1162 976}%
\special{pa 1170 980}%
\special{pa 1176 986}%
\special{pa 1184 990}%
\special{pa 1192 996}%
\special{pa 1200 1000}%
\special{pa 1208 1006}%
\special{pa 1216 1010}%
\special{pa 1224 1016}%
\special{pa 1232 1020}%
\special{pa 1240 1026}%
\special{pa 1248 1030}%
\special{pa 1254 1036}%
\special{pa 1262 1040}%
\special{pa 1270 1046}%
\special{pa 1278 1050}%
\special{pa 1284 1056}%
\special{pa 1292 1060}%
\special{pa 1298 1066}%
\special{pa 1304 1070}%
\special{pa 1312 1076}%
\special{pa 1318 1080}%
\special{pa 1324 1086}%
\special{pa 1330 1090}%
\special{pa 1336 1096}%
\special{pa 1342 1100}%
\special{pa 1348 1106}%
\special{pa 1352 1110}%
\special{pa 1358 1116}%
\special{pa 1362 1120}%
\special{pa 1366 1126}%
\special{pa 1372 1130}%
\special{pa 1374 1136}%
\special{pa 1378 1140}%
\special{pa 1382 1146}%
\special{pa 1386 1150}%
\special{pa 1388 1156}%
\special{pa 1390 1160}%
\special{pa 1392 1166}%
\special{pa 1394 1170}%
\special{pa 1396 1176}%
\special{pa 1398 1180}%
\special{pa 1400 1186}%
\special{pa 1400 1190}%
\special{pa 1400 1196}%
\special{pa 1400 1200}%
\special{sp}%
%
\special{pn 8}%
\special{pa 1400 1200}%
\special{pa 1400 1600}%
\special{fp}%
%
\special{pn 8}%
\special{sh 0}%
\special{ia 1200 1000 40 40  0.0000000 6.2831853}%
%
\special{pn 8}%
\special{pa 1400 400}%
\special{pa 1400 800}%
\special{fp}%
%
\special{pn 8}%
\special{pa 1400 800}%
\special{pa 1400 806}%
\special{pa 1400 810}%
\special{pa 1400 816}%
\special{pa 1398 820}%
\special{pa 1396 826}%
\special{pa 1394 830}%
\special{pa 1392 836}%
\special{pa 1390 840}%
\special{pa 1388 846}%
\special{pa 1386 850}%
\special{pa 1382 856}%
\special{pa 1378 860}%
\special{pa 1374 866}%
\special{pa 1372 870}%
\special{pa 1366 876}%
\special{pa 1362 880}%
\special{pa 1358 886}%
\special{pa 1352 890}%
\special{pa 1348 896}%
\special{pa 1342 900}%
\special{pa 1336 906}%
\special{pa 1330 910}%
\special{pa 1324 916}%
\special{pa 1318 920}%
\special{pa 1312 926}%
\special{pa 1304 930}%
\special{pa 1298 936}%
\special{pa 1292 940}%
\special{pa 1284 946}%
\special{pa 1278 950}%
\special{pa 1270 956}%
\special{pa 1262 960}%
\special{pa 1254 966}%
\special{pa 1248 970}%
\special{pa 1240 976}%
\special{pa 1232 980}%
\special{pa 1224 986}%
\special{pa 1216 990}%
\special{pa 1208 996}%
\special{pa 1200 1000}%
\special{pa 1192 1006}%
\special{pa 1184 1010}%
\special{pa 1176 1016}%
\special{pa 1170 1020}%
\special{pa 1162 1026}%
\special{pa 1154 1030}%
\special{pa 1146 1036}%
\special{pa 1138 1040}%
\special{pa 1132 1046}%
\special{pa 1124 1050}%
\special{pa 1116 1056}%
\special{pa 1110 1060}%
\special{pa 1102 1066}%
\special{pa 1096 1070}%
\special{pa 1090 1076}%
\special{pa 1082 1080}%
\special{pa 1076 1086}%
\special{pa 1070 1090}%
\special{pa 1064 1096}%
\special{pa 1060 1100}%
\special{pa 1054 1106}%
\special{pa 1048 1110}%
\special{pa 1044 1116}%
\special{pa 1038 1120}%
\special{pa 1034 1126}%
\special{pa 1030 1130}%
\special{pa 1026 1136}%
\special{pa 1022 1140}%
\special{pa 1018 1146}%
\special{pa 1016 1150}%
\special{pa 1012 1156}%
\special{pa 1010 1160}%
\special{pa 1008 1166}%
\special{pa 1006 1170}%
\special{pa 1004 1176}%
\special{pa 1002 1180}%
\special{pa 1002 1186}%
\special{pa 1002 1190}%
\special{pa 1000 1196}%
\special{pa 1000 1200}%
\special{sp}%
%
\special{pn 8}%
\special{pa 1000 1200}%
\special{pa 1000 1600}%
\special{fp}%
%
\special{pn 8}%
\special{sh 1}%
\special{ar 1000 400 40 40  0.0000000 6.2831853}%
%
\put(10.9000,-2.8000){\makebox(0,0){\(a^{\mp 1}\)}}%
%
\special{pn 8}%
\special{sh 1}%
\special{ar 1400 1600 40 40  0.0000000 6.2831853}%
%
\put(15.9000,-17.4000){\makebox(0,0){\((a \lhd^\epsilon b)^{\pm 1}\)}}%
%
\special{pn 8}%
\special{sh 1}%
\special{ar 1400 400 40 40  0.0000000 6.2831853}%
%
\put(14.6000,-2.8000){\makebox(0,0){\(b^{\mp \epsilon}\)}}%
%
\special{pn 8}%
\special{sh 1}%
\special{ar 1000 1600 40 40  0.0000000 6.2831853}%
%
\put(10.6000,-17.4000){\makebox(0,0){\(b^{\pm \epsilon}\)}}%
%
%
\put(24,-10){\makebox(0,0){\(=\)}}%
%
\special{pn 8}%
\special{pa 2600 400}%
\special{pa 4600 400}%
\special{pa 4600 1000}%
\special{pa 2600 1000}%
\special{pa 2600 400}%
\special{dt 0.045}%
%
\special{pn 8}%
\special{pa 4600 1000}%
\special{pa 4600 1600}%
\special{pa 2600 1600}%
\special{pa 2600 1000}%
\special{dt 0.045}%
%
\special{pn 8}%
\special{pa 2700 400}%
\special{pa 2700 1600}%
\special{fp}%
%
\put(29,-13){\makebox(0,0){\(\cdots\)}}%
%
\put(29,-7){\makebox(0,0){\(\cdots\)}}%
%
\special{pn 8}%
\special{pa 3100 400}%
\special{pa 3100 1600}%
\special{fp}%
%
\special{pn 8}%
\special{pa 4100 400}%
\special{pa 4100 1600}%
\special{fp}%
%
\put(43,-7){\makebox(0,0){\(\cdots\)}}%
%
\put(43,-13){\makebox(0,0){\(\cdots\)}}%
%
\special{pn 8}%
\special{pa 4500 400}%
\special{pa 4500 1600}%
\special{fp}%
%
\special{pn 8}%
\special{pa 3800 400}%
\special{pa 3800 1600}%
\special{fp}%
%
\special{pn 8}%
\special{sh 0}%
\special{ia 3800 1400 40 40  0.0000000 6.2831853}%
%
\special{pn 8}%
\special{sh 1}%
\special{ar 3800 400 40 40  0.0000000 6.2831853}%
%
\put(38.2,-2.9){\makebox(0,0){\(a^{\mp 1}\)}}%
%
\special{pn 8}%
\special{sh 1}%
\special{ar 3800 1600 40 40  0.0000000 6.2831853}%
%
\put(38.9,-17.4){\makebox(0,0){\((a \lhd^\epsilon b)^{\pm 1}\)}}%
%
\special{pn 8}%
\special{pa 3200 1600}%
\special{pa 3200 800}%
\special{fp}%
%
\special{pn 8}%
\special{ar 3400 800 200 200  3.1415927 6.2831853}%
%
\special{pn 8}%
\special{pa 3600 800}%
\special{pa 3600 1200}%
\special{fp}%
%
\special{pn 8}%
\special{ar 3800 1200 200 200  0.0000000 3.1415927}%
%
\special{pn 8}%
\special{pa 4000 1200}%
\special{pa 4000 400}%
\special{fp}%
%
\special{pn 8}%
\special{sh 1}%
\special{ar 3200 1600 40 40  0.0000000 6.2831853}%
%
\put(32.7,-17.3500){\makebox(0,0){\(b^{\pm \epsilon}\)}}%
%
\special{pn 8}%
\special{sh 1}%
\special{ar 4000 400 40 40  0.0000000 6.2831853}%
%
\put(41.3,-2.8500){\makebox(0,0){\(b^{\mp \epsilon}\)}}%
\end{picture}%
\end{center}
\caption{Revised operation and its decomposition.}
\label{FIG:revised operation}
\end{figure}
\end{remark}

For a trivial diagram \((\sphere^1, \emptyset)\),
the product \(\Pi(\sphere^1, \emptyset)\) is 
equal to \(e\).
Thus, supposing \(D'\) 
in Lemma \ref{LEM:cobordism 1-diagram} to be trivial,
we have:

\begin{corollary}
\label{COR:boundary 1-diagram}
A \(Q\)-coloured \(1\)-diagram \(D\) on \(\sphere^1\) is 
a boundary of some \(Q\)-coloured \(2\)-diagram \(\tilde{D}\)
on \(\disk^2\),
if and only if \(\Pi(D) = e\).
\end{corollary}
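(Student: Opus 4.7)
My plan is to deduce this corollary directly from Lemma \ref{LEM:cobordism 1-diagram} by specializing to the case $D' = (\sphere^1, \emptyset)$, the trivial \(1\)-diagram on $\sphere^1$. Since the trivial diagram has no colour data, $\Pi(\sphere^1, \emptyset)$ is the empty product $e$ in $G_Q$, and the conjugacy criterion of the lemma reduces to ``$\Pi(D)$ is conjugate to $e$,'' which is equivalent to $\Pi(D) = e$ because the conjugacy class of the identity in any group is $\{e\}$.

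For the forward direction, I would start with a $Q$-coloured \(2\)-diagram $\tilde{D}$ on $\disk^2$ satisfying $\partial\tilde{D} = D$, remove a small open disk in the interior of $\disk^2$ disjoint from $\tilde{D}$, and so obtain a $Q$-coloured \(2\)-diagram on the annulus $\sphere^1 \times I$ whose boundary is $D$ on the outer circle and empty on the inner one. Lemma \ref{LEM:cobordism 1-diagram} then forces $\Pi(D)$ to be conjugate to $e$, hence equal to $e$.

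For the converse, if $\Pi(D) = e$ then $\Pi(D)$ is conjugate to $\Pi(\sphere^1, \emptyset) = e$, so Lemma \ref{LEM:cobordism 1-diagram} supplies a $Q$-coloured \(2\)-diagram $\tilde{D}_0$ on an annulus realizing this cobordism. I then glue a disk carrying the trivial \(2\)-diagram (which is a legitimate diagram by the definition in \S\ref{SEC:diagram}) to the empty boundary component to produce a $Q$-coloured \(2\)-diagram on $\disk^2$ with boundary $D$.

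I do not anticipate any real obstacle: the corollary is essentially a bookkeeping restatement of the lemma, and the only things to verify are the trivial topological identifications (annulus plus a disk cap equals $\disk^2$, and $\disk^2$ minus an open disk equals $\sphere^1 \times I$) together with the observation that attaching a trivially-coloured disk respects the attaching axioms of \S\ref{SEC:diagram}, which holds vacuously since there are no components of the diagram to match along the capped circle.
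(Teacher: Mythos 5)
Your proposal is correct and is essentially the paper's own proof: the paper likewise specializes Lemma \ref{LEM:cobordism 1-diagram} to the trivial diagram (taking \(D'\) trivial on a disk, so \(\Pi(\partial D') = e\)), caps the annulus with the trivially coloured disk for the ``if'' direction, and removes a small trivial disk from \(\tilde{D}\) to reduce the ``only if'' direction to the conjugacy statement of the lemma. The only additions you make are the explicit remarks that conjugacy to \(e\) means equality with \(e\) and that the capping is compatible with the diagram axioms, which the paper leaves implicit.
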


\begin{proof}
Let \(D'\) be a trivial diagram on \(\disk^2\). 
Since \(\Pi(D) = \Pi(\partial D') = e\), 
Lemma \ref{LEM:cobordism 1-diagram} shows that 
there exists a \(Q\)-coloured diagram \(\tilde{D}'\) 
on \(\sphere^1 \times I\) 
such that \(\partial \tilde{D}' = D \cup (- \partial D')\). 
Attaching \(D'\) to \(\tilde{D}'\) along \(\partial D'\), 
we obtain \(\tilde{D}\) on \(\disk^2\). 
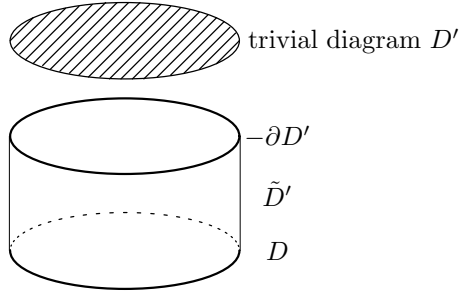
\begin{figure}[ht]
\begin{center}
\unitlength 0.1in%
\begin{picture}( 23.5000, 16.5000)(  7.9200,-18.5000)%
%
\special{pn 8}%
\special{ar 1400 500 600 200  0.0000000 6.2831853}%
%
\special{pn 8}%
\special{pa 920 380}%
\special{pa 800 500}%
\special{fp}%
\special{pa 1010 350}%
\special{pa 820 540}%
\special{fp}%
\special{pa 1090 330}%
\special{pa 850 570}%
\special{fp}%
\special{pa 1160 320}%
\special{pa 880 600}%
\special{fp}%
\special{pa 1230 310}%
\special{pa 920 620}%
\special{fp}%
\special{pa 1300 300}%
\special{pa 970 630}%
\special{fp}%
\special{pa 1360 300}%
\special{pa 1010 650}%
\special{fp}%
\special{pa 1420 300}%
\special{pa 1060 660}%
\special{fp}%
\special{pa 1480 300}%
\special{pa 1110 670}%
\special{fp}%
\special{pa 1530 310}%
\special{pa 1160 680}%
\special{fp}%
\special{pa 1590 310}%
\special{pa 1210 690}%
\special{fp}%
\special{pa 1640 320}%
\special{pa 1270 690}%
\special{fp}%
\special{pa 1690 330}%
\special{pa 1320 700}%
\special{fp}%
\special{pa 1740 340}%
\special{pa 1380 700}%
\special{fp}%
\special{pa 1790 350}%
\special{pa 1440 700}%
\special{fp}%
\special{pa 1835 365}%
\special{pa 1500 700}%
\special{fp}%
\special{pa 1880 380}%
\special{pa 1570 690}%
\special{fp}%
\special{pa 1920 400}%
\special{pa 1640 680}%
\special{fp}%
\special{pa 1950 430}%
\special{pa 1710 670}%
\special{fp}%
\special{pa 1980 460}%
\special{pa 1790 650}%
\special{fp}%
\special{pa 2000 500}%
\special{pa 1880 620}%
\special{fp}%
%
\put(26,-5.0000){\makebox(0,0){trivial diagram \(D'\)}}%
%
\special{pn 13}%
\special{ar 1400 1000 600 198  0.0000000 6.2831853}%
%
\put(22.0000,-10.0000){\makebox(0,0){\(- \partial D'\)}}%
%
\special{pn 4}%
\special{pa 796 1000}%
\special{pa 796 1600}%
\special{fp}%
%
\special{pn 4}%
\special{pa 2004 1000}%
\special{pa 2004 1600}%
\special{fp}%
%
\put(22.0000,-13.0000){\makebox(0,0){\(\tilde{D}'\)}}%
%
\special{pn 13}%
\special{ar 1400 1600 600 200  0.0000000 3.1415927}%
%
\special{pn 8}%
\special{ar 1400 1600 600 200  3.1415927 3.1715927}%
\special{ar 1400 1600 600 200  3.2615927 3.2915927}%
\special{ar 1400 1600 600 200  3.3815927 3.4115927}%
\special{ar 1400 1600 600 200  3.5015927 3.5315927}%
\special{ar 1400 1600 600 200  3.6215927 3.6515927}%
\special{ar 1400 1600 600 200  3.7415927 3.7715927}%
\special{ar 1400 1600 600 200  3.8615927 3.8915927}%
\special{ar 1400 1600 600 200  3.9815927 4.0115927}%
\special{ar 1400 1600 600 200  4.1015927 4.1315927}%
\special{ar 1400 1600 600 200  4.2215927 4.2515927}%
\special{ar 1400 1600 600 200  4.3415927 4.3715927}%
\special{ar 1400 1600 600 200  4.4615927 4.4915927}%
\special{ar 1400 1600 600 200  4.5815927 4.6115927}%
\special{ar 1400 1600 600 200  4.7015927 4.7315927}%
\special{ar 1400 1600 600 200  4.8215927 4.8515927}%
\special{ar 1400 1600 600 200  4.9415927 4.9715927}%
\special{ar 1400 1600 600 200  5.0615927 5.0915927}%
\special{ar 1400 1600 600 200  5.1815927 5.2115927}%
\special{ar 1400 1600 600 200  5.3015927 5.3315927}%
\special{ar 1400 1600 600 200  5.4215927 5.4515927}%
\special{ar 1400 1600 600 200  5.5415927 5.5715927}%
\special{ar 1400 1600 600 200  5.6615927 5.6915927}%
\special{ar 1400 1600 600 200  5.7815927 5.8115927}%
\special{ar 1400 1600 600 200  5.9015927 5.9315927}%
\special{ar 1400 1600 600 200  6.0215927 6.0515927}%
\special{ar 1400 1600 600 200  6.1415927 6.1715927}%
\special{ar 1400 1600 600 200  6.2615927 6.2831853}%
%
\put(22.0000,-16.0000){\makebox(0,0){\(D\)}}%
\end{picture}%
\end{center}
\caption{Attaching or removing a trivial diagram on a disk.}
\label{FIG:spherical surgery}
\end{figure}

If \(D = \partial \tilde{D}\) holds for some diagram \(\tilde{D}\) 
on \(\disk^2\), by removing from \(\tilde{D}\)
a trivial diagram \(D'\) on a disk, 
we obtain a diagram \(\tilde{D}'\) on an annulus 
whose boundary is \(D \cup (- \partial D')\).
By Lemma \ref{LEM:cobordism 1-diagram}, 
\(\Pi(D)\) is conjugate 
with \(\Pi(\partial D') = e\),
that is, \(\Pi(D) = e\).
\end{proof}

\subsection{Shadow colourability of \(2\)-diagrams.}
\label{SEC:2-diagram}

We consider here only the case that \(2\)-diagrams 
are on \(\disk^2\) or on \(\sphere^2\), 
for both \(\disk^2\) and \(\sphere^2\) 
have trivial fundamental groups. 
\begin{figure}[ht]
\begin{center}
\unitlength 0.1in%
\begin{picture}(40.0800,30.0000)(3.9200,-32.7000)%
%
\special{pn 8}%
\special{pa 400 400}%
\special{pa 2200 400}%
\special{pa 2200 1400}%
\special{pa 400 1400}%
\special{pa 400 400}%
\special{dt 0.045}%
%
\special{pn 8}%
\special{pa 500 400}%
\special{pa 500 1400}%
\special{fp}%
%
\put(6.0000,-9.0000){\makebox(0,0){\(\cdots\)}}%
%
\special{pn 8}%
\special{pa 700 400}%
\special{pa 700 1400}%
\special{fp}%
%
\special{pn 8}%
\special{pa 1900 400}%
\special{pa 1900 1400}%
\special{fp}%
%
\put(20.0000,-9.0000){\makebox(0,0){\(\cdots\)}}%
%
\special{pn 8}%
\special{pa 2100 400}%
\special{pa 2100 1400}%
\special{fp}%
%
\special{pn 8}%
\special{pa 1000 1400}%
\special{pa 1000 1000}%
\special{fp}%
%
\special{pn 8}%
\special{ar 1300 1000 300 200  3.1415927 6.2831853}%
%
\special{pn 8}%
\special{pa 1600 1000}%
\special{pa 1600 1400}%
\special{fp}%
%
\put(17.0000,-6){\makebox(0,0){\(\circlearrowright\)}}%
%
\put(13.0000,-7.5000){\makebox(0,0){\(x\)}}%
%
\put(10,-14.8){\makebox(0,0){\(\epsilon\)}}%
%
\put(15.7,-14.8){\makebox(0,0){\(- \epsilon\)}}%
%
\put(11.0000,-6.0000){\makebox(0,0){\fbox{\(\alpha\)}}}%
%
\put(13.0000,-12.0000){\makebox(0,0){\fbox{\(\alpha \lhd^\epsilon x\)}}}%
%
%
\special{pn 8}%
\special{pa 2600 400}%
\special{pa 4400 400}%
\special{pa 4400 1400}%
\special{pa 2600 1400}%
\special{pa 2600 400}%
\special{dt 0.045}%
%
\special{pn 8}%
\special{pa 2700 400}%
\special{pa 2700 1400}%
\special{fp}%
%
\put(28.0000,-9.0000){\makebox(0,0){\(\cdots\)}}%
%
\special{pn 8}%
\special{pa 2900 400}%
\special{pa 2900 1400}%
\special{fp}%
%
\special{pn 8}%
\special{pa 4100 400}%
\special{pa 4100 1400}%
\special{fp}%
%
\put(42.0000,-9.0000){\makebox(0,0){\(\cdots\)}}%
%
\special{pn 8}%
\special{pa 4300 400}%
\special{pa 4300 1400}%
\special{fp}%
%
\special{pn 8}%
\special{pa 3200 400}%
\special{pa 3200 800}%
\special{fp}%
%
\special{pn 8}%
\special{ar 3500 800 300 200  0.0000000 3.1415927}%
%
\special{pn 8}%
\special{pa 3800 800}%
\special{pa 3800 400}%
\special{fp}%
%
\put(35.0000,-10.5000){\makebox(0,0){\(x\)}}%
%
\put(38,-3.4){\makebox(0,0){\(\epsilon\)}}%
%
\put(31.7,-3.4){\makebox(0,0){\(- \epsilon\)}}%
%
\put(31.0000,-12.0000){\makebox(0,0){\fbox{\(\alpha\)}}}%
%
\put(35.0000,-6.0000){\makebox(0,0){\fbox{\(\alpha \lhd^\epsilon x\)}}}%
%
%
\special{pn 8}%
\special{pa 400 1800}%
\special{pa 2200 1800}%
\special{pa 2200 2800}%
\special{pa 400 2800}%
\special{pa 400 1800}%
\special{dt 0.045}%
%
\special{pn 8}%
\special{pa 500 1800}%
\special{pa 500 2800}%
\special{fp}%
%
\put(6,-23.0000){\makebox(0,0){\(\cdots\)}}%
%
\special{pn 8}%
\special{pa 700 1800}%
\special{pa 700 2800}%
\special{fp}%
%
\special{pn 8}%
\special{pa 1900 1800}%
\special{pa 1900 2800}%
\special{fp}%
%
\put(20,-23.0000){\makebox(0,0){\(\cdots\)}}%
%
\special{pn 8}%
\special{pa 2100 1800}%
\special{pa 2100 2800}%
\special{fp}%
%
\special{pn 8}%
\special{pa 1000 1800}%
\special{pa 1000 2200}%
\special{pa 1600 2400}%
\special{pa 1600 2800}%
\special{fp}%
%
\put(9.4,-22){\makebox(0,0){\(x\)}}%
%
\put(9.7,-17.2){\makebox(0,0){\(- \epsilon\)}}%
%
\put(16,-28.8){\makebox(0,0){\(\epsilon\)}}%
%
\special{pn 8}%
\special{sh 0}%
\special{ia 1300 2300 50 50  0.0000000 6.2831853}%
%
\special{pn 8}%
\special{pa 1600 1800}%
\special{pa 1600 2200}%
\special{pa 1000 2400}%
\special{pa 1000 2800}%
\special{fp}%
%
\put(9.4,-25){\makebox(0,0){\(y\)}}%
%
\put(15.7,-17.2){\makebox(0,0){\(- \eta\)}}%
%
\put(10,-28.8){\makebox(0,0){\(\eta\)}}%
%
\put(8.5,-20.0000){\makebox(0,0){\fbox{\(\alpha\)}}}%
%
\put(13.0000,-20.0000){\makebox(0,0){\fbox{\(\alpha \lhd^\epsilon x\)}}}%
%
\put(13.0000,-26.0000){\makebox(0,0){\fbox{\(\alpha \lhd^\eta y\)}}}%
%
\put(16.5000,-31.5){\makebox(0,0){%
\fbox{\((\alpha \lhd^\epsilon x) \lhd^\eta y\)}}}%
%
\special{pn 8}%
\special{pa 1650 3030}%
\special{pa 1750 2900}%
\special{pa 1745 2700}%
\special{pa 1720 2600}%
\special{sp}%
%
%
\special{pn 8}%
\special{pa 2600 1800}%
\special{pa 4400 1800}%
\special{pa 4400 2800}%
\special{pa 2600 2800}%
\special{pa 2600 1800}%
\special{dt 0.045}%
%
\special{pn 8}%
\special{pa 2700 1800}%
\special{pa 2700 2800}%
\special{fp}%
%
\put(28,-23.0000){\makebox(0,0){\(\cdots\)}}%
%
\special{pn 8}%
\special{pa 2900 1800}%
\special{pa 2900 2800}%
\special{fp}%
%
\special{pn 8}%
\special{pa 4100 1800}%
\special{pa 4100 2800}%
\special{fp}%
%
\put(42,-23.0000){\makebox(0,0){\(\cdots\)}}%
%
\special{pn 8}%
\special{pa 4300 1800}%
\special{pa 4300 2800}%
\special{fp}%
%
\special{pn 8}%
\special{pa 3200 2800}%
\special{pa 3200 2400}%
\special{pa 3800 2200}%
\special{pa 3800 1800}%
\special{fp}%
%
\put(31.4,-25){\makebox(0,0){\(y\)}}%
%
\put(37.7,-17.2){\makebox(0,0){\(- \eta\)}}%
%
\put(32,-28.8){\makebox(0,0){\(\eta\)}}%
%
\special{pn 8}%
\special{sh 0}%
\special{ia 3500 2300 50 50  0.0000000 6.2831853}%
%
\special{pn 8}%
\special{pa 3800 2800}%
\special{pa 3800 2400}%
\special{pa 3200 2200}%
\special{pa 3200 1800}%
\special{fp}%
%
\put(31.4,-22){\makebox(0,0){\(x\)}}%
%
\put(31.7,-17.2){\makebox(0,0){\(- \epsilon\)}}%
%
\put(38,-28.8){\makebox(0,0){\(\epsilon\)}}%
%
\put(30.5,-20){\makebox(0,0){\fbox{\(\alpha\)}}}%
%
\put(35.0000,-20.0000){\makebox(0,0){\fbox{\(\alpha \lhd^\epsilon x\)}}}%
%
\put(35.0000,-26.0000){\makebox(0,0){\fbox{\(\alpha \lhd^\eta y\)}}}%
%
\put(38.5000,-31.5){\makebox(0,0){%
\fbox{\((\alpha \lhd^\eta y) \lhd^\epsilon x\)}}}%
%
\special{pn 8}%
\special{pa 3850 3030}%
\special{pa 3950 2900}%
\special{pa 3945 2700}%
\special{pa 3920 2600}%
\special{sp}%
\end{picture}%
\end{center}
\caption{Four types of subdiagrams.}
\label{FIG:tangle diagram}
\end{figure}
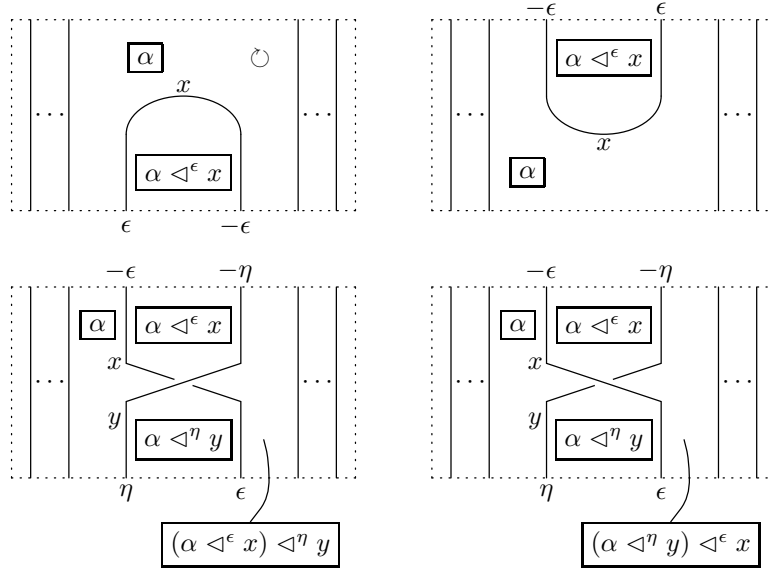
Shadow colourability of \(2\)-diagrams on a manifold \(M\) 
is closely related to representations 
of \(\pi_1(M)\) on \(Q\). \\

Let \(D\) be a \(2\)-diagram on a square \(I^2 \simeq \disk^2\). 
We suppose that \(\partial D\) is a set of points 
on \(I \times \{\pm 1\}\). 
By the assumption, 
we can decompose \(D\) into subdiagrams 
as drawn in Figure \ref{FIG:tangle diagram}. 
As for these subdiagrams, 
if the shadow colour of the left-end region is given, 
we can decide the colour of each region 
from left to right order. 
Since the diagram \(D\) is obtained 
by arranging subdiagrams from up to down,
we have: 

\begin{lemma}
\label{LEM:disk 2-diagram}
If \(D\) is a \(Q\)-coloured \(2\)-diagram on \(\disk^2\), 
\(D\) is freely \(Q\)-shadow colourable. 
\end{lemma}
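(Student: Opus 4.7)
The plan is to exploit the decomposition of $D$ into horizontal strips of elementary subdiagrams described just before the lemma and shown in Figure~\ref{FIG:tangle diagram}. Realising $\disk^2$ as the square $I^2$, I may arrange $D$ so that $\partial D$ lies entirely on the top and bottom edges; then the left edge $\{-1\}\times I$ is contained in the base region, and I am free to stipulate that its shadow colour is any chosen $\alpha \in Q$.

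First I would verify local extendability: within each of the four types of subdiagram in Figure~\ref{FIG:tangle diagram}, once the shadow colour of the left-end region is prescribed, axiom (SC) uniquely determines the shadow colours of all remaining regions, and the (SC)-relations inside the subdiagram are simultaneously satisfied. For the cup and cap subdiagrams, propagating across the two intersections with the arc gives $\alpha \mapsto \alpha \lhd^{\epsilon} x \mapsto (\alpha \lhd^{\epsilon} x) \lhd^{-\epsilon} x = \alpha$, which is consistent by (R1). For the two crossing subdiagrams, the two different ways of propagating from the left-end region to the right-end region (over the upper strand first or the lower strand first) yield the same result by axiom (R2), combined with the crossing relation (C) that links the colours of the incoming and outgoing under-arcs.

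Then I paste the strips together by induction from top to bottom. Each successive strip inherits the shadow colour $\alpha$ on its left-end region from the one above (since no arc of $D$ meets the left edge), and local extendability lets me continue. Matching across a horizontal dividing line is automatic, because the shadow colouring along that line is determined deterministically from left to right inside both the strip above and the strip below, starting from the same colour $\alpha$ on the left edge. The only non-trivial point is the consistency check inside a crossing subdiagram, but this boils down directly to axioms (R1) and (R2); with that done, the shadow colouring extending any prescribed base colour $\alpha$ exists, so $D$ is freely $Q$-shadow colourable.
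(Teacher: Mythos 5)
Your proposal is correct and follows essentially the same route as the paper: the paper also places \(\partial D\) on \(I \times \{\pm 1\}\), decomposes \(D\) into the horizontal strips of Figure \ref{FIG:tangle diagram}, and propagates the shadow colour of the left-end region rightwards through each strip, stacking the strips from top to bottom. Your explicit consistency checks via (R1), (R2) and (C) at cups, caps and crossings, and the matching of band colours across the dividing lines, merely spell out what the paper leaves implicit in that figure.
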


For a diagram \(D\) on \(\sphere^2\), 
by removing a sufficiently small disk from \(\sphere^2\), 
we can regard \(D\) as a diagram on \(\disk^2\). 
Therefore, directly from Lemma \ref{LEM:disk 2-diagram}, 
we obtain: 

\begin{corollary}
\label{COR:sphere 2-diagram}
If \(D\) is a \(Q\)-coloured \(2\)-diagram on \(\sphere^2\), 
\(D\) is freely \(Q\)-shadow colourable. 
\end{corollary}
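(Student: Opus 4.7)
The plan is to reduce to the disk case already handled by Lemma \ref{LEM:disk 2-diagram}. Given a $Q$-coloured $2$-diagram $D$ on $\sphere^2$, let $\ast$ be the distinguished basepoint (which by convention lies in the exterior of $D$) and let $r_0 \in \mathcal{R}(D)$ denote the base-region containing $\ast$. First I would choose an open disk $U$ around $\ast$ small enough that $\bar{U} \subset r_0$; then $M := \sphere^2 \setminus U$ is homeomorphic to $\disk^2$, and by refining the unit-diagram decomposition of $D$ near $U$ (which only affects $E^2_0$-pieces lying inside $r_0$) we may regard $D$ as a $Q$-coloured $2$-diagram on $\disk^2$ in the sense of \S \ref{SEC:diagram}, whose boundary $\partial M = \partial U$ is contained in a single region.

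Next, fix an arbitrary $\alpha \in Q$. On $\disk^2$ the base-region may be taken to be the (unique) region of $D$ that meets $\partial M$, and by Lemma \ref{LEM:disk 2-diagram} there exists a $Q$-shadow colouring of $D$ on $\disk^2$ assigning this region the colour $\alpha$. Transporting this back to $\sphere^2$ and assigning the filled-in disk $U$ the same colour $\alpha$ (consistently with the surrounding portion of $r_0$, which already carries the colour $\alpha$) yields a $Q$-shadow colouring of $D$ on $\sphere^2$ whose value on $r_0$ is $\alpha$. Since $\alpha \in Q$ was arbitrary, $D$ is freely $Q$-shadow colourable.

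The argument is genuinely short, as all the real content is absorbed into Lemma \ref{LEM:disk 2-diagram}. The only point to check is that $U$ can be chosen to lie strictly inside a single region, so that removing it neither disconnects nor meets any component of $D$; but this is immediate from the openness of $r_0$ together with $\ast \in r_0$. Hence there is essentially no obstacle beyond invoking the preceding lemma.
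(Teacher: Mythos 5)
Your proposal is correct and follows essentially the same route as the paper: remove a small disk from \(\sphere^2\) (lying in a single region) to regard \(D\) as a diagram on \(\disk^2\), invoke Lemma \ref{LEM:disk 2-diagram}, and transport the shadow colouring back. The extra care you take about choosing the disk inside the base-region and matching the colour \(\alpha\) is a harmless elaboration of the paper's one-line argument.
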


As shown in the construction 
of \(Q\)-shadow colouring of \(D\), 
the whole shadow colouring of \(D\) is uniquely determined 
when a shadow colour \(\alpha\) to the base-region is given. 
We denote this \(Q\)-shadow coloured diagram by \(D_\alpha\) 
as in \S \ref{SEC:1-diagram}.

\begin{lemma}
\label{LEM:cobordism 2-diagram}
Let \(D\) be a \(Q\)-coloured \(2\)-diagram on \(\sphere^2\),
and let \(\alpha\) and \(\beta\) be connected elements of \(Q\).
Two shadow coloured diagrams \(D_\alpha\) and \(D_\beta\) 
represent rack homologous \(3\)-cycles.
\end{lemma}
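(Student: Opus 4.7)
The plan is to build, for $\beta = \alpha \lhd^\epsilon x$ (a single one-step quandle operation), a shadow-coloured 3-diagram $\tilde D$ on the cobordism $\sphere^2 \times I$ whose boundary is $D_\alpha \sqcup (-D_\beta)$. Then the boundary-chain correspondence $\partial \langle \tilde D\rangle = \langle\partial\tilde D\rangle$ gives $\langle D_\alpha\rangle - \langle D_\beta\rangle \in B^\rck_3(Q)$. The general case of connected $\alpha$ and $\beta$ follows by induction on the length of a sequence $\alpha = \alpha_0, \alpha_1, \ldots, \alpha_n = \beta$ with $\alpha_i = \alpha_{i-1} \lhd^{\epsilon_i} x_i$.

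To construct $\tilde D$, I would take two kinds of sheets. First, for each component $c$ of $D$ coloured by $a$, include the vertical sheet $c \times I$ coloured by $a$. Second, include the horizontal slice $\sphere^2 \times \{1/2\}$; the vertical sheets cut it along the curves $c \times \{1/2\}$ into one piece $r \times \{1/2\}$ for every region $r$ of $D$, and I colour each piece by $x_r$, the shadow colour of $r$ in a $Q$-shadow colouring of $D$ whose base-region colour is $x$ (such a colouring exists by Corollary \ref{COR:sphere 2-diagram}). The vertical sheets are taken to lie over the horizontal sheet, so that their colours remain constant while the horizontal pieces absorb all the colour changes. The coloring condition at each double-point curve $c \times \{1/2\}$ then reads $x_{r'} = x_r \lhd^e a$, which is exactly the shadow-propagation rule and is automatic by the definition of $x_r$.

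The 3-dimensional shadow colouring assigns to $r \times [0,1/2]$ the colour $\gamma_r(\alpha)$ and to $r \times [1/2,1]$ the colour $\gamma_r(\beta)$, where $\gamma_r(\cdot)$ denotes the 2D shadow colour of $r$. Consistency across the horizontal sheet reduces to $\gamma_r(\alpha) \lhd^\epsilon x_r = \gamma_r(\beta)$, which follows by iterating the right-distributive axiom $(u \lhd w) \lhd (v \lhd w) = (u \lhd v) \lhd w$ along a path in $\sphere^2$ from the base-region $r_0$ to $r$. The restriction of $\tilde D$ to $\sphere^2 \times \{0\}$ then returns $D$ with its original component colours and base-region shadow $\alpha$, namely $D_\alpha$; similarly the restriction to $\sphere^2 \times \{1\}$ is $D_\beta$. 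Hence $\partial \tilde D = D_\alpha \sqcup (-D_\beta)$, and $\langle D_\alpha\rangle$ and $\langle D_\beta\rangle$ are rack homologous.

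The main technical obstacle is to check that $\tilde D$ fits the framework of 3-diagrams from \S\ref{SEC:diagram}: above each double point $p$ of $D$, the point $p \times \{1/2\}$ is a priori a multiple point where two vertical sheets and four horizontal quadrant sheets coincide, which is not a standard triple point. I expect this to be resolvable by a small perturbation of the heights of the four horizontal pieces near $p$, splitting the configuration into a tower of standard triple points whose local colourings are automatically consistent by repeated use of the quandle axioms; verifying this decomposition fits the paper's unit-diagram framework, and checking the resulting signs, is the routine but most delicate part of the argument.
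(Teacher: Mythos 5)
Your construction is essentially the paper's own proof: the paper builds the cobordism \(\tilde{D}\) on \(\sphere^2 \times I\) out of \(D \times I\) together with horizontal spheres \(S_i = \sphere^2 \times \{p_i\}\) placed in the lowest level, one for each step \((x_i,\epsilon_i)\) of the connecting sequence, each \(S_i\) coloured by the \(Q\)-shadow colouring of \(D\) with base colour \(x_i\) (Corollary \ref{COR:sphere 2-diagram}) and oriented according to \(\epsilon_i\), so your ``one step plus induction'' version is only a repackaging of the same idea. Your final worry is unfounded: above a double point \(p\) of \(D\) the configuration (over-arc sheet, under-arc sheet, horizontal sheet) is exactly the unit triple point \(E^3_3\) of \S\ref{SEC:diagram}, where the lowest sheet is by definition depicted cut into quadrants, so no perturbation of the horizontal pieces is needed (and none would be legitimate, since they form a single embedded sphere).
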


\begin{proof}
Since \(\alpha\) and \(\beta\) are connected,
there exists a sequence \((x_1, \epsilon_1)\),
\ldots, \((x_n, \epsilon_n)\) such that
\(( \cdots (\alpha \lhd^{\epsilon_1} x_1) \cdots ) \lhd^{\epsilon_n} x_n
= \beta\).
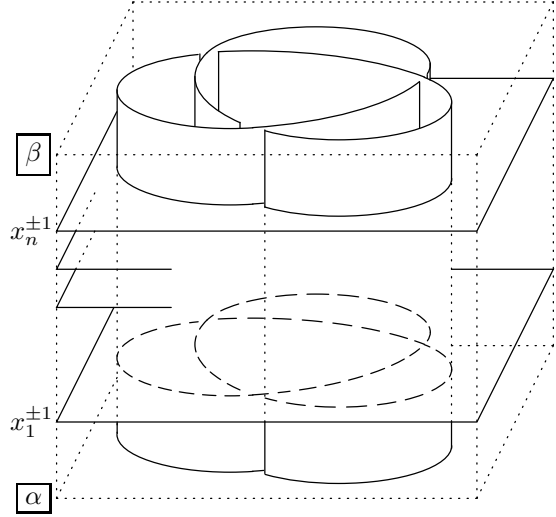
\begin{figure}[ht]
\begin{center}
\unitlength 0.1in%
\begin{picture}(28.6000, 28.0000)(5.5000,-30.0000)%
%
%
\special{pn 8}%
\special{pa 1200 300}%
\special{pa 3400 300}%
\special{pa 3000 1100}%
\special{pa 800 1100}%
\special{pa 1200 300}%
\special{dt 0.045}%
%
\put(6.8,-11.0000){\makebox(0,0){\fbox{\(\beta\)}}}%
%
\special{pn 8}%
\special{pa 1200 300}%
\special{pa 1200 640}%
\special{dt 0.045}%
\special{pn 8}%
\special{pa 3400 300}%
\special{pa 3400 2100}%
\special{dt 0.045}%
\special{pn 8}%
\special{pa 3000 1100}%
\special{pa 3000 2900}%
\special{dt 0.045}%
\special{pn 8}%
\special{pa 800 1100}%
\special{pa 800 2900}%
\special{dt 0.045}%
%
\special{pn 8}%
\special{pa 2867 2100}%
\special{pa 3400 2100}%
\special{pa 3000 2900}%
\special{pa 800 2900}%
\special{pa 1117 2266}%
\special{dt 0.045}%
%
\put(6.8,-29.0000){\makebox(0,0){\fbox{\(\alpha\)}}}%
%
\special{pn 8}%
\special{pa 2755 700}%
\special{pa 3400 700}%
\special{pa 3000 1500}%
\special{pa 800 1500}%
\special{pa 1115 875}%
\special{fp}%
%
\put(6.7000,-15.0000){\makebox(0,0){\(x_n^{\pm 1}\)}}%
%
\special{pn 8}%
\special{pa 1000 1300}%
\special{pa 900 1500}%
\special{dt 0.045}%
\special{pn 8}%
\special{pa 900 1500}%
\special{pa 800 1700}%
\special{pa 1400 1700}%
\special{fp}%
%
\special{pn 8}%
\special{pa 1000 1500}%
\special{pa 900 1700}%
\special{dt 0.045}%
\special{pn 8}%
\special{pa 900 1700}%
\special{pa 800 1900}%
\special{pa 1400 1900}%
\special{fp}%
%
\special{pn 8}%
\special{pa 2867 1700}%
\special{pa 3400 1700}%
\special{pa 3000 2500}%
\special{pa 800 2500}%
\special{pa 1100 1900}%
\special{fp}%
%
\put(6.7000,-25.0000){\makebox(0,0){\(x_1^{\pm 1}\)}}%
%
%
\special{pn 8}%
\special{ar 1588  566 1200 400  0.3859699 1.3717577}%
\special{ar 1708  765  592 198  1.3664357 4.4263729}%
\special{ar 1824  956 1196 400  4.5631845 5.5659307}%
\special{ar 2276  824  592 198  5.5707115 6.2831853}%
\special{ar 2276  824  592 198  0.0000000 2.2780132}%
\special{ar 2724  694 1200 400  2.5037074 3.4675626}%
\special{ar 2156  630  600 200  3.4664978 6.2831853}%
\special{ar 2156  630  600 200  0.0000000 0.2259699}%
%
%
\special{pn 8}%
\special{ar 1588  966 1200 400  1.3217577 1.3717577}%
\special{ar 1708 1165  592 198  1.3664357 3.1415926}%
\special{ar 2276 1224  592 198  0.0000000 2.2780132}%
%
%
\special{pn 8}%
\special{ar 1588 1966 1200 400  0.3859699 0.4859699}%
\special{ar 1588 1966 1200 400  0.5359699 0.6359699}%
\special{ar 1588 1966 1200 400  0.6859699 0.7759699}%
\special{ar 1588 1966 1200 400  0.8259699 0.9059699}%
\special{ar 1588 1966 1200 400  0.9509699 1.0309699}%
\special{ar 1588 1966 1200 400  1.0759699 1.1559699}%
\special{ar 1588 1966 1200 400  1.2009699 1.2559699}%
\special{ar 1588 1966 1200 400  1.3009699 1.3717577}%
\special{ar 1708 2165 592 198  1.3664357 1.4064357}%
\special{ar 1708 2165 592 198  1.4864357 1.6164357}%
\special{ar 1708 2165 592 198  1.6964357 1.8164357}%
\special{ar 1708 2165 592 198  1.8964357 2.0364357}%
\special{ar 1708 2165 592 198  2.1164357 2.2564357}%
\special{ar 1708 2165 592 198  2.3464357 2.5064357}%
\special{ar 1708 2165 592 198  2.6164357 2.7964357}%
\special{ar 1708 2165 592 198  2.9264357 3.3064357}%
\special{ar 1708 2165 592 198  3.4564357 3.6864357}%
\special{ar 1708 2165 592 198  3.8064357 4.0064357}%
\special{ar 1708 2165 592 198  4.1064357 4.2564357}%
\special{ar 1708 2165 592 198  4.3464357 4.4263729}%
\special{ar 1824 2356 1196 400  4.5631845 4.6431845}%
\special{ar 1824 2356 1196 400  4.6931845 4.7621845}%
\special{ar 1824 2356 1196 400  4.8031845 4.8831845}%
\special{ar 1824 2356 1196 400  4.9331845 5.0231845}%
\special{ar 1824 2356 1196 400  5.0731845 5.1631845}%
\special{ar 1824 2356 1196 400  5.2131845 5.3031845}%
\special{ar 1824 2356 1196 400  5.3591845 5.4591845}%
\special{ar 1824 2356 1196 400  5.5033726 5.5659307}%
\special{ar 2276 2224 592 198  5.5707115 5.6507115}%
\special{ar 2276 2224 592 198  5.7507115 5.9507115}%
\special{ar 2276 2224 592 198  6.1007115 6.2831853}%
\special{ar 2276 2224 592 198  0.0000000 0.2000000}%
\special{ar 2276 2224 592 198  0.3500000 0.6000000}%
\special{ar 2276 2224 592 198  0.7300000 0.9400000}%
\special{ar 2276 2224 592 198  1.0400000 1.2200000}%
\special{ar 2276 2224 592 198  1.3200000 1.5000000}%
\special{ar 2276 2224 592 198  1.6000000 1.7800000}%
\special{ar 2276 2224 592 198  1.8700000 2.0500000}%
\special{ar 2276 2224 592 198  2.1500000 2.2780132}%
\special{ar 2724 2094 1200 400  2.5037074 2.6037074}%
\special{ar 2724 2094 1200 400  2.6637074 2.7737074}%
\special{ar 2724 2094 1200 400  2.8337074 2.9737074}%
\special{ar 2724 2094 1200 400  3.0437074 3.2537074}%
\special{ar 2724 2094 1200 400  3.3437074 3.4675626}%
\special{ar 2156 2030  600 200  3.4664978 3.6064978}%
\special{ar 2156 2030  600 200  3.7364978 3.9764978}%
\special{ar 2156 2030  600 200  4.0664978 4.2664978}%
\special{ar 2156 2030  600 200  4.3564978 4.5464978}%
\special{ar 2156 2030  600 200  4.6364978 4.8264978}%
\special{ar 2156 2030  600 200  4.9064978 5.0964978}%
\special{ar 2156 2030  600 200  5.1864978 5.3864978}%
\special{ar 2156 2030  600 200  5.4864978 5.6964978}%
\special{ar 2156 2030  600 200  5.7964978 6.0164978}%
\special{ar 2156 2030  600 200  6.1372941 6.2831853}%
\special{ar 2156 2030  600 200  0.0000000 0.2259699}%
%
%
\special{pn 8}%
\special{ar 1588 2366 1200 400  1.3217577 1.3717577}%
\special{ar 1708 2565  592 198  1.3664357 3.1415926}%
\special{ar 2276 2624  592 198  0.0000000 2.2780132}%
%
%
\special{pn 8}%
\special{pa 1117 760}%
\special{pa 1117 1160}%
\special{fp}%
\special{pn 8}%
\special{pa 1117 1160}%
\special{pa 1117 2560}%
\special{dt 0.045}%
\special{pn 8}%
\special{pa 1117 2500}%
\special{pa 1117 2560}%
\special{fp}%
\special{pn 8}%
\special{pa 1524 690}%
\special{pa 1524 950}%
\special{fp}%
\special{pn 8}%
\special{pa 1550  572}%
\special{pa 1550  605}%
\special{fp}%
\special{pn 8}%
\special{pa 1648 560}%
\special{pa 1648 870}%
\special{fp}%
\special{pn 8}%
\special{pa 1760  930}%
\special{pa 1760  960}%
\special{fp}%
\special{pn 8}%
\special{pa 1890 970}%
\special{pa 1890 1370}%
\special{fp}%
\special{pn 8}%
\special{pa 1890 1370}%
\special{pa 1890 2500}%
\special{dt 0.045}%
\special{pn 8}%
\special{pa 1890 2500}%
\special{pa 1890 2770}%
\special{fp}%
\special{pn 8}%
\special{pa 2700 720}%
\special{pa 2700 960}%
\special{fp}%
\special{pn 8}%
\special{pa 2740  680}%
\special{pa 2740  700}%
\special{fp}%
\special{pn 8}%
\special{pa 2755  620}%
\special{pa 2755  710}%
\special{fp}%
\special{pn 8}%
\special{pa 2867 830}%
\special{pa 2867 1240}%
\special{fp}%
\special{pn 8}%
\special{pa 2867 1250}%
\special{pa 2867 2500}%
\special{dt 0.045}%
\special{pn 8}%
\special{pa 2867 2500}%
\special{pa 2867 2640}%
\special{fp}%
%
%
\end{picture}%
\end{center}
\caption{Diagram \(\tilde{D}\).}
\label{FIG:cobordism}
\end{figure}
We prove this lemma by constructing 
a \(Q\)-shadow coloured \(3\)-diagram \(\tilde{D}\) 
on \(\sphere^2 \times I\) such that \(\partial \tilde{D} 
= D_\alpha \cup (- D_\beta)\) holds. 
The diagram \(\tilde{D}\) consists 
of \(D \times I\) and \(n\) parallel sheets 
\(S_i = \sphere^2 \times p_i\), 
where \(- 1 < p_1 < \cdots < p_n < 1\) 
(See Figure \ref{FIG:cobordism}). 
Suppose that every sheet \(S_i\) is in the lowest level. 
The intersection of \(S_i\) with \(D \times I\) is 
a diagram on \(S_i \simeq \sphere^2\), 
which is equivalent to \(D\) (See Figure \ref{FIG:contents}). 
By Corollary \ref{COR:sphere 2-diagram}, 
we can regard each sheet \(S_i\) as a \(Q\)-shadow coloured 
diagram with the shadow colour of the base-region being \(x_i\). 
Also we assume that \(S_i\) is oriented as its normal vector 
is in the same direction as that of \(\{\ast\} \times I\) 
when the signature \(\epsilon_i\) is \(+ 1\), 
or in the opposite when \(\epsilon_i\) is \(- 1\). 

On these conditions, 
the regions between two sheets \(S_i\) and \(S_{i + 1}\) 
can be shadow coloured with \(Q\) by the similar argument 
in Corollary \ref{COR:sphere 2-diagram} 
(See the right one of Figure \ref{FIG:contents}). 
So the whole diagram \(\tilde{D}\) is \(Q\)-shadow colourable. 

Clearly, when we colour the regions in \(\sphere \times [- 1, p_1)\) 
by \(Q\) so that \(\sphere \times \{- 1\}\) becomes \(D_\alpha\), 
the whole diagram \(\tilde{D}\) has \(D_\alpha \cup (- D_\beta)\) 
as its boundary. 
Therefore, \(\langle D_\alpha \rangle - \langle D_\beta \rangle 
= \partial \langle \tilde{D} \rangle\) holds. 
Since \(\langle D_\alpha \rangle\) and \(\langle D_\beta \rangle\) 
are rack \(3\)-cycles, 
it completes the proof. 
\end{proof}
\begin{figure}[ht]
\begin{center}
\unitlength 0.1in%
\begin{picture}(45.5000, 17.0000)(21.5000,-18.7000)%
%
%
\special{pn 8}%
\special{pa 2160 280}%
\special{pa 3600 280}%
\special{pa 3600 1400}%
\special{pa 2160 1400}%
\special{pa 2160 280}%
\special{fp}%
%
\special{pn 8}%
\special{ar 2580 700 640 640  0.1000000 1.0454295}%
\special{ar 2740 977 320 320  1.0390723 4.0306649}%
\special{ar 2900 1254 640 640  4.2870221 5.2377559}%
\special{ar 3060 977 320 320  5.2441130 6.2831853}%
\special{ar 3060 977 320 320  0.0000000 1.9525204}%
\special{ar 3220 700 640 640  2.1961632 3.1415927}%
\special{ar 2900 700 320 320  3.1415927 6.0831853}%
\put(23.3,-12.5){\makebox(0,0){\fbox{\(x_i\)}}}%
\put(29,-15.5){\makebox(0,0){Sheet \(S_i\)}}%
%
%
\special{pn 8}%
\special{pa 4200 300}%
\special{pa 6400 300}%
\special{pa 6000 1100}%
\special{pa 3800 1100}%
\special{pa 4200 300}%
\special{dt 0.045}%
\put(65.8000,-3.0000){\makebox(0,0){\(S_{i + 1}\)}}%
%
\special{pn 8}%
\special{pa 4200 300}%
\special{pa 4200 640}%
\special{dt 0.045}%
\special{pn 8}%
\special{pa 6400 300}%
\special{pa 6400 700}%
\special{dt 0.045}%
\special{pn 8}%
\special{pa 6000 1100}%
\special{pa 6000 1500}%
\special{dt 0.045}%
\special{pn 8}%
\special{pa 3800 1100}%
\special{pa 3800 1500}%
\special{dt 0.045}%
%
\special{pn 8}%
\special{pa 5755 700}%
\special{pa 6400 700}%
\special{pa 6000 1500}%
\special{pa 3800 1500}%
\special{pa 4110 880}%
\special{dt 0.045}%
\put(65,-7.0000){\makebox(0,0){\(S_i\)}}%
%
%
\special{pn 8}%
\special{ar 4588  566 1200 400  0.3859699 0.8755119}%
\special{ar 4588  566 1200 400  0.8755119 1.3717577}%
\special{ar 4708  765  592 198  1.3664357 3.1415926}%
\special{ar 4708  765  592 198  3.1415926 4.4263729}%
\special{ar 4824  956 1196 400  4.5631845 4.7123889}%
\special{ar 4824  956 1196 400  4.7123889 5.5659307}%
\special{ar 5276  824  592 198  5.5707115 6.2831853}%
\special{ar 5276  824  592 198  0.0000000 2.2780132}%
\special{ar 5724  694 1200 400  2.5037074 3.1415926}%
\special{ar 5724  694 1200 400  3.1415926 3.4675626}%
\special{ar 5156  630  600 200  3.4664978 6.2831853}%
\special{ar 5156  630  600 200  0.0000000 0.2259699}%
%
%
\special{pn 8}%
\special{pa 4117  760}%
\special{pa 4117 1160}%
\special{fp}%
\special{pn 8}%
\special{pa 4524  690}%
\special{pa 4524  950}%
\special{fp}%
\special{pn 8}%
\special{pa 4550  572}%
\special{pa 4550  605}%
\special{fp}%
\special{pn 8}%
\special{pa 4648  560}%
\special{pa 4648  870}%
\special{fp}%
\special{pn 8}%
\special{pa 4760  930}%
\special{pa 4760  960}%
\special{fp}%
\special{pn 8}%
\special{pa 4890  970}%
\special{pa 4890 1370}%
\special{fp}%
\special{pn 8}%
\special{pa 5700  720}%
\special{pa 5700  960}%
\special{fp}%
\special{pn 8}%
\special{pa 5740  680}%
\special{pa 5740  700}%
\special{fp}%
\special{pn 8}%
\special{pa 5755  620}%
\special{pa 5755  710}%
\special{fp}%
\special{pn 8}%
\special{pa 5867  830}%
\special{pa 5867 1240}%
\special{fp}%
%
%
\special{pn 8}%
\special{ar 4588  966 1200 400  1.3217577 1.3717577}%
\special{ar 4708 1165  592 198  1.3664357 3.1415926}%
\special{ar 5276 1224  592 198  0.0000000 2.2780132}%
%
%
\special{pn 8}%
\special{pa 4670 1620}%
\special{pa 4640 1616}%
\special{pa 4608 1610}%
\special{pa 4578 1604}%
\special{pa 4548 1594}%
\special{pa 4516 1584}%
\special{pa 4486 1570}%
\special{pa 4456 1552}%
\special{pa 4434 1528}%
\special{pa 4430 1500}%
\special{pa 4446 1468}%
\special{pa 4466 1442}%
\special{pa 4480 1430}%
\special{sp}%
\put(49.0000,-17.3200){\makebox(0,0){%
\fbox{\(( \cdots (\alpha \lhd^{\epsilon_1} x_1) %
\cdots ) \lhd^{\epsilon_i} x_i\)}}}%
%
%
\end{picture}%
\end{center}
\caption{Sheet \(S_i\) and regions between \(S_i\) and \(S_{i + 1}\).}
\label{FIG:contents}
\end{figure}
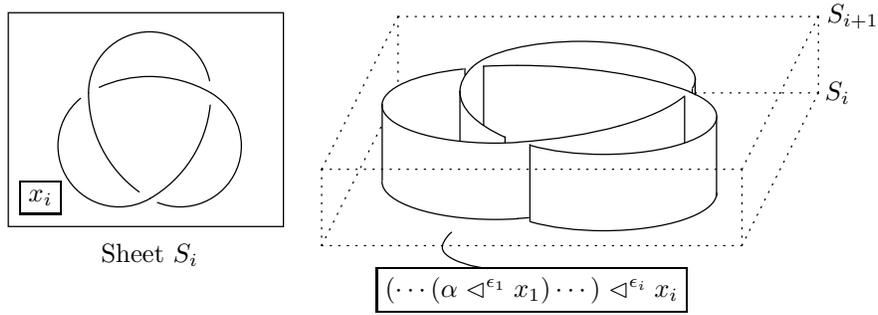

Directly from Lemma \ref{LEM:cobordism 2-diagram},
we have:

\begin{corollary}
\label{COR:diagram class}
When \(Q\) is connected, 
a \(Q\)-coloured \(2\)-diagram \(D\) on \(\sphere^2\) 
determines a unique third rack homology class of \(Q\). 
\end{corollary}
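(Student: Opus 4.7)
The plan is to combine the two preceding results directly. First I would recall that by Corollary \ref{COR:sphere 2-diagram}, any $Q$-coloured $2$-diagram $D$ on $\sphere^2$ is freely $Q$-shadow colourable, so for every element $\alpha \in Q$ there is a well-defined $Q$-shadow coloured diagram $D_\alpha$ obtained from $D$ by assigning $\alpha$ as the shadow colour of the base-region. Since $\sphere^2$ is closed and $D_\alpha$ is a $Q$-shadow coloured $2$-diagram on it, the associated chain $\langle D_\alpha \rangle \in C_3^\rck(Q; \Z)$ is a rack $3$-cycle, as noted after the discussion of the boundary operator, and therefore represents a homology class $[D_\alpha] \in H_3^\rck(Q; \Z)$.

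The next step is to show $[D_\alpha]$ does not depend on the choice of $\alpha$. Given any two elements $\alpha, \beta \in Q$, the hypothesis that $Q$ is connected means, by the definition in \S\ref{SEC:rack quandle}, that $\alpha$ and $\beta$ are connected in the rack-theoretic sense. Lemma \ref{LEM:cobordism 2-diagram} then applies and gives that $D_\alpha$ and $D_\beta$ represent rack homologous $3$-cycles, i.e.\ $[D_\alpha] = [D_\beta]$ in $H_3^\rck(Q; \Z)$.

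Putting these together, the homology class $[D] := [D_\alpha]$ (for any choice of $\alpha \in Q$) is a well-defined element of $H_3^\rck(Q; \Z)$ depending only on the $Q$-coloured diagram $D$, which is exactly what the corollary asserts. Since Lemma \ref{LEM:cobordism 2-diagram} and Corollary \ref{COR:sphere 2-diagram} do all the real work, there is no genuine obstacle here; the only thing to make explicit is the observation that connectedness of $Q$ promotes the conditional statement of Lemma \ref{LEM:cobordism 2-diagram} (``if $\alpha$ and $\beta$ are connected'') into an unconditional one, which is what yields uniqueness of the resulting homology class.
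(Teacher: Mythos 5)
Your argument is correct and is exactly the paper's route: the paper derives this corollary directly from Lemma \ref{LEM:cobordism 2-diagram}, with Corollary \ref{COR:sphere 2-diagram} supplying the shadow colourings \(D_\alpha\) and connectedness of \(Q\) making the lemma apply to every pair \(\alpha, \beta\). Your write-up just makes explicit the same steps the paper leaves implicit.
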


\subsection{Deformations of \(2\)-diagrams.}
\label{SEC:Reidemeister move}

In the rest of \S \ref{CHAP:lemma}, 
we consider deformations of \(2\)-diagrams 
which preserve the rack (or quandle) homology classes 
represented by the diagrams. \\

Let \(D\) be a \(2\)-diagram on a surface \(M\).
\textbf{Reidemeister deformations} 
are operations to replace a disk on \(M\) 
with a new disk. 
There are three types of Reidemeister deformations, 
called R-I, R-II and R-III.
In Figures \ref{FIG:R-II deformation}, 
\ref{FIG:R-III deformation} 
and \ref{FIG:R-I deformation}, there are drawn the disks 
that each deformation removes from and attaches to \(M\).
\begin{figure}[ht]
\begin{center}
\unitlength 0.1in%
\begin{picture}( 36.6000, 10.5000)( 1.9200,-11.5000)%
%
%
\special{pn 8}%
\special{pa 200 200}%
\special{pa 200 1100}%
\special{pa 1600 1100}%
\special{pa 1600 200}%
\special{pa 200 200}%
\special{dt 0.045}%
\special{pn 8}%
\special{pa 200 520}%
\special{pa 900 520}%
\special{pa 1100 820}%
\special{pa 1200 820}%
\special{pa 1400 520}%
\special{pa 1600 520}%
\special{fp}%
\special{pn 4}%
\special{sh 1}%
\special{pa 1600 520}%
\special{pa 1530 550}%
\special{pa 1540 520}%
\special{pa 1530 490}%
\special{pa 1600 520}%
\special{fp}%
\special{pn 8}%
\special{pa 200 840}%
\special{pa 900 840}%
\special{pa 988 708}%
\special{fp}%
\special{pa 1024 654}%
\special{pa 1100 540}%
\special{pa 1200 540}%
\special{pa 1276 654}%
\special{fp}%
\special{pa 1312 708}%
\special{pa 1400 840}%
\special{pa 1600 840}%
\special{fp}%
\put(17.5000,-7.8000){\makebox(0,0){\(x^{\pm 1}\)}}%
\put(17.0000,-5.0000){\makebox(0,0){\(y\)}}%
\put(9.0000,-9.7000){\makebox(0,0){\fbox{\(\alpha\)}}}%
\put(5.5000,-6.7500){\makebox(0,0){\fbox{\(\alpha \lhd^{\pm 1} x\)}}}%
\put(9.0000,-3.6000){\makebox(0,0){%
\fbox{\((\alpha \lhd^{\pm 1} x) \lhd y\)}}}%
%
%
%
\special{pn 8}%
\special{pa 2200 200}%
\special{pa 2200 1100}%
\special{pa 3600 1100}%
\special{pa 3600 200}%
\special{pa 2200 200}%
\special{dt 0.045}%
\special{pn 8}%
\special{pa 2200 520}%
\special{pa 3600 520}%
\special{fp}%
\special{pn 4}%
\special{sh 1}%
\special{pa 3600 520}%
\special{pa 3530 550}%
\special{pa 3540 520}%
\special{pa 3530 490}%
\special{pa 3600 520}%
\special{fp}%
\special{pn 8}%
\special{pa 2200 840}%
\special{pa 3600 840}%
\special{fp}%
\put(37.5000,-7.8000){\makebox(0,0){\(x^{\pm 1}\)}}%
\put(37.0000,-5.0000){\makebox(0,0){\(y\)}}%
\put(29.0000,-9.7000){\makebox(0,0){\fbox{\(\alpha\)}}}%
\put(29.0000,-6.7500){\makebox(0,0){\fbox{\(\alpha \lhd^{\pm 1} x\)}}}%
\put(29.0000,-3.6000){\makebox(0,0){%
\fbox{\((\alpha \lhd^{\pm 1} x) \lhd y\)}}}%
%
%
\end{picture}%
\end{center}
\caption{R-II deformation.}
\label{FIG:R-II deformation}
\end{figure}
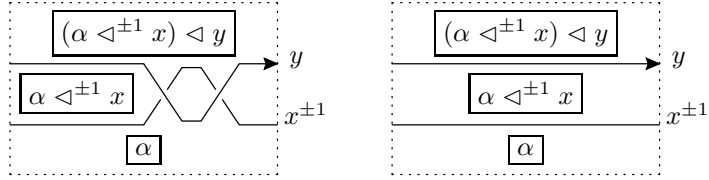
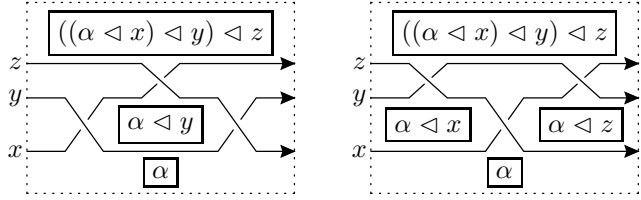
\begin{figure}[ht]
\begin{center}
\unitlength 0.1in%
\begin{picture}( 33.1000, 10.8000)( 1.0000,-12.1000)%
%
%
\special{pn 8}%
\special{pa 200 200}%
\special{pa 1600 200}%
\special{pa 1600 1200}%
\special{pa 200 1200}%
\special{pa 200 200}%
\special{dt 0.045}%
\special{pn 8}%
\special{pa 200 520}%
\special{pa 800 520}%
\special{pa 1000 700}%
\special{pa 1200 700}%
\special{pa 1400 980}%
\special{pa 1600 980}%
\special{fp}%
\special{pn 4}%
\special{sh 1}%
\special{pa 1600 980}%
\special{pa 1530 1010}%
\special{pa 1540 980}%
\special{pa 1530 950}%
\special{pa 1600 980}%
\special{fp}%
\special{pn 8}%
\special{pa 200 700}%
\special{pa 400 700}%
\special{pa 600 980}%
\special{pa 1200 980}%
\special{pa 1280 868}%
\special{fp}%
\special{pa 1320 812}%
\special{pa 1400 700}%
\special{pa 1600 700}%
\special{fp}%
\special{pn 4}%
\special{sh 1}%
\special{pa 1600 700}%
\special{pa 1530 730}%
\special{pa 1540 700}%
\special{pa 1530 670}%
\special{pa 1600 700}%
\special{fp}%
\special{pn 8}%
\special{pa 200 980}%
\special{pa 400 980}%
\special{pa 480 868}%
\special{fp}%
\special{pa 520 812}%
\special{pa 600 700}%
\special{pa 800 700}%
\special{pa 880 628}%
\special{fp}%
\special{pa 920 592}%
\special{pa 1000 520}%
\special{pa 1600 520}%
\special{fp}%
\special{pn 4}%
\special{sh 1}%
\special{pa 1600 520}%
\special{pa 1530 550}%
\special{pa 1540 520}%
\special{pa 1530 490}%
\special{pa 1600 520}%
\special{fp}%
\put(1.4000,-9.8000){\makebox(0,0){\(x\)}}%
\put(1.4000,-7.0000){\makebox(0,0){\(y\)}}%
\put(1.4000,-5.2000){\makebox(0,0){\(z\)}}%
\put(9.0000,-10.9000){\makebox(0,0){\fbox{\(\alpha\)}}}%
\put(9.0000,-8.4000){\makebox(0,0){\fbox{\(\alpha \lhd y\)}}}%
\put(9.0000,-3.6000){\makebox(0,0){%
\fbox{\(((\alpha \lhd x) \lhd y) \lhd z\)}}}%
%
%
\special{pn 8}%
\special{pa 2000 200}%
\special{pa 3400 200}%
\special{pa 3400 1200}%
\special{pa 2000 1200}%
\special{pa 2000 200}%
\special{dt 0.045}%
\special{pn 8}%
\special{pa 2000 520}%
\special{pa 2200 520}%
\special{pa 2400 700}%
\special{pa 2600 700}%
\special{pa 2800 980}%
\special{pa 3400 980}%
\special{fp}%
\special{pn 4}%
\special{sh 1}%
\special{pa 3400 980}%
\special{pa 3330 1010}%
\special{pa 3340 980}%
\special{pa 3330 950}%
\special{pa 3400 980}%
\special{fp}%
\special{pn 8}%
\special{pa 2000 700}%
\special{pa 2200 700}%
\special{pa 2280 628}%
\special{fp}%
\special{pa 2320 592}%
\special{pa 2400 520}%
\special{pa 3000 520}%
\special{pa 3200 700}%
\special{pa 3400 700}%
\special{fp}%
\special{pn 4}%
\special{sh 1}%
\special{pa 3400 700}%
\special{pa 3330 730}%
\special{pa 3340 700}%
\special{pa 3330 670}%
\special{pa 3400 700}%
\special{fp}%
\special{pn 8}%
\special{pa 2000 980}%
\special{pa 2600 980}%
\special{pa 2680 868}%
\special{fp}%
\special{pa 2720 812}%
\special{pa 2800 700}%
\special{pa 3000 700}%
\special{pa 3080 628}%
\special{fp}%
\special{pa 3120 592}%
\special{pa 3200 520}%
\special{pa 3400 520}%
\special{fp}%
\special{pn 4}%
\special{sh 1}%
\special{pa 3400 520}%
\special{pa 3330 550}%
\special{pa 3340 520}%
\special{pa 3330 490}%
\special{pa 3400 520}%
\special{fp}%
\put(19.4000,-9.8000){\makebox(0,0){\(x\)}}%
\put(19.4000,-7.0000){\makebox(0,0){\(y\)}}%
\put(19.4000,-5.2000){\makebox(0,0){\(z\)}}%
\put(27.0000,-10.9000){\makebox(0,0){\fbox{\(\alpha\)}}}%
\put(23.0000,-8.4000){\makebox(0,0){\fbox{\(\alpha \lhd x\)}}}%
\put(31.0000,-8.4000){\makebox(0,0){\fbox{\(\alpha \lhd z\)}}}%
\put(27.0000,-3.6000){\makebox(0,0){%
\fbox{\(((\alpha \lhd x) \lhd y) \lhd z\)}}}%
%
%
\end{picture}%
\end{center}
\caption{R-III deformation.}
\label{FIG:R-III deformation}
\end{figure}

Lemmas \ref{LEM:R-II R-III deformation} 
and \ref{LEM:R-I deformation} are easily seen 
from the figures below.
See [CKS1] for a detailed proof.

\begin{lemma}
\label{LEM:R-II R-III deformation}
Let \(D\) and \(D'\) be 
\(Q\)-(shadow) coloured \(2\)-diagrams on \(M\).
If \(D'\) can be obtained from \(D\) 
by an R-II or R-III deformation, 
then chains \(\langle D \rangle\) 
and \(\langle D' \rangle\) are rack homologous. 
\end{lemma}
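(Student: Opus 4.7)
The approach is to verify, for each type of Reidemeister deformation, that the chain difference \(\langle D\rangle - \langle D'\rangle\) is a rack boundary. Since \(D\) and \(D'\) agree outside the small disk where the deformation takes place, the entire contribution to this difference comes from crossings inside that disk, and it suffices to examine the chain elements of the crossings shown in Figures \ref{FIG:R-II deformation} and \ref{FIG:R-III deformation}.

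For the R-II deformation, the two crossings in \(D\) inside the disk have opposite signatures, and the under-strand colors on the two sides differ by the action of the over-strand color \(x\). Reading off the chain elements via the conventions of Figures \ref{FIG:coloured diagram} and \ref{FIG:shadow coloured diagram}, the two contributions cancel directly in the coloured case, while in the shadow case they differ by the boundary of a four-chain whose second and third entries repeat the color \(x\); the quandle axiom \(x \lhd x = x\) collapses the expected extra terms in the boundary formula to exactly the required difference. Either way, the chain difference is a rack boundary.

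For the R-III deformation, each side of the move contributes three crossings, and the six chain elements in the total difference \(\langle D\rangle - \langle D'\rangle\) correspond exactly to the six nonzero summands of \(\pm\partial_3(x,y,z)\) in the coloured case (or of \(\pm\partial_4(\alpha,x,y,z)\) in the shadow case), where \(x\), \(y\), \(z\) are the colors of the three strands ordered by level and \(\alpha\) is the shadow color of the unique region adjacent to all three strands shown in the figure. Writing out \(\partial_3(x,y,z) = -(x \lhd y, z) + (x, z) + (x \lhd z, y \lhd z) - (x, y)\), one matches each summand to a crossing in either \(D\) or \(D'\), so the chain difference is again a rack boundary.

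The main obstacle is the careful bookkeeping of signs and of the orientation conventions at each crossing, which determines both the signature of each chain element and the precise match with the boundary formula; this combinatorial verification, which depends on the labelling scheme of \S \ref{SEC:colouring}, is carried out in full detail in [CKS1], so I will simply appeal to that reference once the matching is set up as above.
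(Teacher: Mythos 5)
Your overall strategy (a local computation at the crossings inside the disk, with the remaining sign conventions deferred to [CKS1]) is the same route the paper takes, but your bookkeeping for the R-II move in the shadow case is not correct. At the two crossings of an R-II disk the over-arc colour, the colour of the initial under-arc, \emph{and} the shadow colour of the initial region \(r^\ini_p\) all coincide, while the signatures are opposite; the equality of the two initial-region colours needs only right-invertibility (R1), since both regions are separated from a common region by arcs of the same colour, and this is exactly why Figure \ref{FIG:R-II deformation} carries identical region labels on both sides. Hence the two shadow contributions cancel on the nose, so \(\langle D\rangle = \langle D'\rangle\) already at the chain level: there is no \(4\)-chain to exhibit and, crucially, no use of the quandle axiom. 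Your proposed mechanism --- the boundary of a degenerate \(4\)-chain such as \((\alpha, x, x, y)\) collapsed via \(x \lhd x = x\) --- cannot produce the required (zero) difference: even with idempotency, \(\partial(\alpha, x, x, y) = (\alpha \lhd y, x \lhd y, x \lhd y) - (\alpha, x, x)\), which is a nonzero degenerate chain in general. Invoking (Q) here would also blur precisely the distinction the paper is drawing between this lemma (rack homologous, valid for the rack chain complex) and Lemma \ref{LEM:R-I deformation}, where degenerate chains genuinely appear and only the quandle class survives.

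For R-III there are two smaller slips. The difference \(\langle D\rangle - \langle D'\rangle\) does contain six crossing terms, but \(\partial_3(x, y, z)\) has only \emph{four} nonzero summands --- the pair \((y, z) - (y, z)\) cancels inside the boundary formula, mirroring the fact that the two \(y\)--\(z\) crossings (one in \(D\), one in \(D'\)) cancel against each other rather than against anything else; your ``six nonzero summands'' is inconsistent with the four-term formula you then write. In the shadow case the base colour \(\alpha\) must be the colour of the common \emph{initial} region of all three arcs (the bottom region in Figure \ref{FIG:R-III deformation}); ``the unique region adjacent to all three strands'' does not pin it down, since the inner triangle is also adjacent to all three and is coloured \(\alpha \lhd y\). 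With these corrections the matching with \(\pm\partial_4(\alpha, x, y, z)\) goes through, and the appeal to [CKS1] for the residual orientation conventions is exactly what the paper itself does.
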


On the other hand,
an R-I deformation changes the represented 
rack homology class, but it keeps 
the quandle homology class:

\begin{lemma}
\label{LEM:R-I deformation}
If \(D'\) can be obtained from \(D\) 
by an R-I deformation, 
then chains \(\langle D \rangle\) 
and \(\langle D' \rangle\) are quandle homologous. 
\end{lemma}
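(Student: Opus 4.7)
My plan is to localize the computation: outside a small disk $U$ containing the R-I deformation, the diagrams $D$ and $D'$ agree, so their chain contributions cancel, and it suffices to analyze the kink inside $U$. Because the two strands meeting at the self-crossing of the kink belong to the same arc of $D$, the colouring axiom (C) forces them to carry the same colour $x$, and the shadow colours of the two small regions flanking the kink coincide via the idempotency axiom (Q) applied to the shadow relation (SC).

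Next I would compute the chain contribution of the kink explicitly. The self-crossing corresponds to a unit $E^2_2$ coloured by $(x,x)$ in the non-shadow case and by $(\alpha,x,x)$ in the shadow case, where $\alpha$ is the shadow colour of the initial region at that point. Both tuples lie in $C^\dgn_\ast(Q)$, so they represent $0$ in $C^\qdl_\ast(Q)$. The remaining unit diagrams forming the loop around the kink are of type $E^2_1$, whose chain contribution is $0$ by the convention of Figure \ref{FIG:coloured diagram}. Thus the only genuine discrepancy between $\langle D \rangle$ and $\langle D' \rangle$ is this degenerate crossing term, together with a residual rack boundary coming from the fact that $U$ is subdivided into unit diagrams differently on the two sides.

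To control the residual boundary, I would construct an explicit $3$-diagram $\tilde D$ on $U \times I$ that interpolates between the kinked local picture on $U \times \{-1\}$ and the straight picture on $U \times \{1\}$, in the same spirit as the cobordism of Lemma \ref{LEM:cobordism 2-diagram} and the proof sketched for Lemma \ref{LEM:R-II R-III deformation}. The axioms (Q) and (SC) together with Corollary \ref{COR:sphere 2-diagram} allow me to shadow-colour $\tilde D$ consistently, and its boundary will equal $D \cup (-D')$ up to the degenerate term already identified. This yields an identity $\langle D \rangle - \langle D' \rangle = \partial \langle \tilde D \rangle + d$ with $d \in C^\dgn_\ast(Q)$, which is exactly the statement that the chains are quandle homologous.

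The main obstacle will be the sign and case bookkeeping. The R-I move comes in several variants according to the sign of the kink and the orientation of the strand, and each variant requires a different decomposition of $U$ into unit $E^2_k$'s with compatible signatures, and a correspondingly different interpolating cobordism $\tilde D$. Confirming in every case that the leftover contribution is genuinely an element of $C^\dgn_\ast(Q)$---and not merely a non-degenerate chain which happens to be null-homologous---is the delicate step that distinguishes this lemma from Lemma \ref{LEM:R-II R-III deformation} and explains why only the quandle homology class, not the rack homology class, is preserved.
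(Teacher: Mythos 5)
Your core computation is the paper's intended argument: the paper disposes of this lemma by reading Figure~\ref{FIG:R-I deformation} (and citing [CKS1]), the point being that an R-I move changes the diagram only at one new crossing whose over- and under-arcs carry the same colour \(x\), so that \(\langle D'\rangle-\langle D\rangle\) is exactly \(\pm(x,x)\) in the coloured case and \(\pm(\beta,x,x)\) in the shadow coloured case (\(\beta\) the shadow colour of \(r^\ini_p\) at the kink), a degenerate chain whose image in \(C^\qdl_\ast(Q)\) vanishes; hence the two chains have equal images there and are in particular quandle homologous. Up to and including your second step, this is correct and is essentially the same proof.

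Two corrections, though. First, there is no ``residual rack boundary coming from the fact that \(U\) is subdivided into unit diagrams differently'': by the definition of \(\langle D\rangle\) in \S\ref{SEC:colouring}, unit diagrams of types \(E^2_1\) and \(E^2_0\) contribute \(0\), so the chain is independent of how the disk \(U\) is cut into unit squares, and the identity \(\langle D'\rangle=\langle D\rangle\pm(\beta,x,x)\) holds on the nose. Consequently your entire third step is superfluous, and as literally described it is also not available: a movie realising an R-I move passes through a branch point, which the paper's simplified notion of diagram (\S\ref{SEC:diagram}) excludes; worse, if a genuine \(3\)-diagram \(\tilde D\) with \(\partial\tilde D=D\cup(-D')\) existed, the two chains would be \emph{rack} homologous, contradicting the fact that \((x,x)\) is in general a nontrivial degeneracy cycle --- which is exactly why R-I preserves only the quandle class and not the rack class. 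Your fallback identity \(\langle D\rangle-\langle D'\rangle=\partial\langle\tilde D\rangle+d\) with \(d\) degenerate is simply what you already proved with \(\tilde D\) empty. Second, the two regions flanking the kink do \emph{not} have equal shadow colours: they differ by \(\lhd\, x\), since idempotency constrains arc colours, not shadow colours. This slip is harmless here, because \((\beta,x,x)\) is degenerate whatever \(\beta\) is, but the assertion as stated is false and should be dropped.
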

\begin{figure}[ht]
\begin{center}
\unitlength 0.1in%
\begin{picture}( 31.4000, 7.6000)( 1.9200,-9.0200)%
%
%
\special{pn 8}%
\special{pa 200 200}%
\special{pa 200 900}%
\special{pa 1000 900}%
\special{pa 1000 200}%
\special{pa 200 200}%
\special{dt 0.045}%
\special{pn 8}%
\special{pa 200 660}%
\special{pa 380 660}%
\special{fp}%
\special{pn 8}%
\special{ar 380 550 440 108  0.0000000 1.5707963}%
\special{ar 600 550 220 110  3.1415926 6.2831853}%
\special{ar 820 550 440 108  2.4007963 3.1415926}%
\special{ar 820 550 440 108  1.5707963 1.8415926}%
\special{pn 8}%
\special{pa 820 660}%
\special{pa 1000 660}%
\special{fp}%
\special{pn 4}%
\special{sh 1}%
\special{pa 1000 660}%
\special{pa 930 630}%
\special{pa 940 660}%
\special{pa 930 690}%
\special{pa 1000 660}%
\special{fp}%
\put(10.8000,-6.6000){\makebox(0,0){\(x\)}}%
\put(6.0000,-7.8000){\makebox(0,0){\fbox{\(\alpha\)}}}%
\put(6.0000,-3.2000){\makebox(0,0){\fbox{\(\alpha \lhd x\)}}}%
%
%
\special{pn 8}%
\special{pa 1300 200}%
\special{pa 1300 900}%
\special{pa 2100 900}%
\special{pa 2100 200}%
\special{pa 1300 200}%
\special{dt 0.045}%
\special{pn 8}%
\special{pa 1300 550}%
\special{pa 2100 550}%
\special{fp}%
\special{pn 4}%
\special{sh 1}%
\special{pa 2100 550}%
\special{pa 2030 520}%
\special{pa 2040 550}%
\special{pa 2030 580}%
\special{pa 2100 550}%
\special{fp}%
\put(21.8000,-6.6000){\makebox(0,0){\(x\)}}%
\put(17.0000,-7.2500){\makebox(0,0){\fbox{\(\alpha\)}}}%
\put(17.0000,-3.7500){\makebox(0,0){\fbox{\(\alpha \lhd x\)}}}%
%
%
\special{pn 8}%
\special{pa 2400 200}%
\special{pa 2400 900}%
\special{pa 3200 900}%
\special{pa 3200 200}%
\special{pa 2400 200}%
\special{dt 0.045}%
\special{pn 8}%
\special{pa 2400 660}%
\special{pa 2580 660}%
\special{fp}%
\special{pn 8}%
\special{ar 2580 550 440 108  0.0000000 0.7407963}%
\special{ar 2580 550 440 108  1.3000000 1.5707963}%
\special{ar 2800 550 220 110  3.1415926 6.2831853}%
\special{ar 3020 550 440 108  1.5707963 3.1415926}%
\special{pn 8}%
\special{pa 3020 660}%
\special{pa 3200 660}%
\special{fp}%
\special{pn 4}%
\special{sh 1}%
\special{pa 3200 660}%
\special{pa 3130 630}%
\special{pa 3140 660}%
\special{pa 3130 690}%
\special{pa 3200 660}%
\special{fp}%
\put(32.8000,-6.6000){\makebox(0,0){\(x\)}}%
\put(28.0000,-7.8000){\makebox(0,0){\fbox{\(\alpha\)}}}%
\put(28.0000,-3.2000){\makebox(0,0){\fbox{\(\alpha \lhd x\)}}}%
%
%
\end{picture}%
\end{center}
\caption{R-I deformation.}
\label{FIG:R-I deformation}
\end{figure}
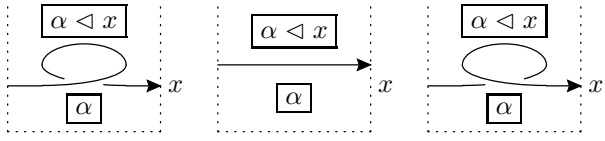

\section{Shadow diagram classes of a link.}
\label{CHAP:shadow class}

In this section,
we will construct some elements 
of the third homology groups of 
the quandle \(Q(L)\) of a link \(L\).
These homology classes are 
derived from the concepts of the diagram classes 
and of the shadow colourings,
thus we call them the shadow diagram classes of \(L\).
The construction is motivated 
by the shadow cocycle invariants in [CKS1].
We also show the relation between the shadow cocycle
invariants and the shadow diagram classes.
As an application,
we generalise the result in [S] using 
the shadow diagram classes.

\subsection{Shifting and splitting homomorphisms.}
\label{SEC:shifting splitting}

Before constructing shadow diagram classes,
we introduce two homomorphisms of rack homology groups:
the one is a rack version of 
the ``shifting homomorphism'' defined in [CJKS3],
and the another is the splitting homomorphism in [LN].\\

For a rack \(R\),
define a homomorphism \(\sigma_n\): \(C^\rck_n(R; A) 
\to C^\rck_{n - 1}(R; A)\) by linearly extending 
a map on its basis 
\begin{quote}
\((x_1, x_2, \ldots, x_n) \mapsto (x_2, \ldots, x_n)\).
\end{quote}
By easy calculation,
we can see that
\begin{quote}
\(\partial_{n - 1} \circ \sigma_n (x_1, x_2, \ldots, x_n) 
= \partial_{n - 1}(x_2, \ldots, x_n) \\
{}\hskip30pt = \sum\limits_{i = 2}^n (- 1)^{(n - 1) - (i - 1)} 
\{(x_2 \lhd x_i, \ldots, x_{i - 1} \lhd x_i, 
x_{i + 1}, \ldots, x_n) \\
{}\hskip140pt - (x_2, \ldots, x_{i - 1}, x_{i + 1}, \ldots, x_n)\} \\
{}\hskip30pt = \sigma_{n - 1} \biggl(\sum\limits_{i = 1}^n (- 1)^{n - i} 
\{(x_1 \lhd x_i, \ldots, x_{i - 1} \lhd x_i, 
x_{i + 1}, \ldots, x_n) \\
{}\hskip140pt - (x_1, \ldots, x_{i - 1}, x_{i + 1}, \ldots, x_n)\}\biggr) \\
{}\hskip30pt = \sigma_{n - 1} \circ \partial_n (x_1, x_2, \ldots, x_n)\)
\end{quote}
for each \(n\), thus \(\sigma_\ast\) is a chain map 
\(C^\rck_\ast(R; A) \to C^\rck_{\ast - 1}(R; A)\),
which induces a homomorphism 
\(H^\rck_\ast(R; A) \to H^\rck_{\ast - 1}(R; A)\). 
We call this the \textbf{shifting homomorphism}
and denote it also by \(\sigma_\ast\).\\

For a quandle \(Q\),
Litherland-Nelson constructed 
an endomorphism \(\alpha_\ast\) of \(C^\rck_\ast(Q; A)\) 
which gives a splitting of \(C^\rck_\ast(Q; A)\) into 
the direct sum of \(C^\dgn_\ast(Q; A)\) 
and \(\alpha_\ast C^\rck_\ast(Q; A)\) as 
chain complexes.
To begin with, we introduce a product 
of \(\bigoplus C^\rck_\ast(Q; A)\) 
defined on its bases as 
\begin{quote}
\((x_1, \ldots, x_n) \bullet (y_1, \ldots, y_m) 
= (x_1, \ldots, x_n, y_1, \ldots, y_m)\).
\end{quote}
Through this product, 
we can obtain a series of 
endomorphisms of \(C^\rck_n(Q; A)\) defined as 
\begin{quote}
\(\alpha_n\): \((x_1, x_2, \ldots, x_n)
\mapsto x_1 \bullet (x_2 - x_1) \bullet \cdots \bullet (x_n - x_{n - 1})\)
\end{quote}
on its basis, where, in the R.H.S., \(x_1\), \(x_2 - x_1\), \ldots,
\(x_n - x_{n - 1}\) are all considered as elements 
of \(C^\rck_1(Q; A)\).
It is proved in [LN] that \(\alpha_\ast\) 
becomes a chain map from \(C^\rck_\ast(Q; A)\) to itself. 
Through this chain map, 
we have: 
\begin{theorem}[Litherland-Nelson {[LN]}]
\label{TH:chain complex splitting}
For an arbitrary quandle \(R\), 
the rack chain complex splits into 
the direct sum of the degeneracy chain complex 
and the image of \(\alpha_\ast\), that is,
\begin{quote}
\(C^\rck_\ast(Q; A) = C^\dgn_\ast(Q; A) 
\oplus \alpha_\ast C^\rck_\ast(Q; A)\) 
\end{quote}
holds, and therefore \(C^\qdl_\ast(Q; A)\) 
is isomorphic to \(\alpha_\ast C^\rck_\ast(Q; A)\).
\end{theorem}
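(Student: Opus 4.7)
The plan is to verify two elementary properties of the endomorphism $\alpha_n$ and deduce the splitting from them. First, I would show that $\alpha_n$ vanishes on $C^\dgn_n(Q; A)$: indeed, if $x_i = x_{i+1}$ for some $1 \le i \le n-1$, then the factor $x_{i+1} - x_i$ (or $x_2 - x_1$ when $i = 1$) is zero in $C^\rck_1(Q; A)$, so the entire product defining $\alpha_n(x_1, \ldots, x_n)$ vanishes. Second, I would expand the product to obtain
\begin{quote}
\(\alpha_n(x_1, \ldots, x_n) = \sum\limits_{S \subseteq \{2, \ldots, n\}} (-1)^{|S|} (x_1, y_2, \ldots, y_n)\),
\end{quote}
where \(y_i = x_{i-1}\) if \(i \in S\) and \(y_i = x_i\) otherwise. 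Whenever \(S \ne \emptyset\), setting \(k = \min S\) gives \(y_{k-1} = x_{k-1} = y_k\), so the corresponding tuple lies in \(C^\dgn_n(Q; A)\). Only the \(S = \emptyset\) term contributes \((x_1, \ldots, x_n)\) itself, so \(\alpha_n(c) \equiv c \pmod{C^\dgn_n(Q; A)}\) for every \(c\).

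These two facts immediately yield the splitting on the module level. Every chain admits the decomposition \(c = (c - \alpha_n(c)) + \alpha_n(c)\) with first summand in \(C^\dgn_n(Q; A)\) and second in \(\alpha_n C^\rck_n(Q; A)\). For directness, suppose \(d \in C^\dgn_n(Q; A) \cap \alpha_n C^\rck_n(Q; A)\) and write \(d = \alpha_n(e)\); the second property gives \(e - d \in C^\dgn_n(Q; A)\), so \(e \in C^\dgn_n(Q; A)\), whence the first property forces \(d = \alpha_n(e) = 0\). Since \(\alpha_\ast\) is shown in [LN] to be a chain map, the direct sum is a decomposition of chain complexes, and the canonical projection \(C^\rck_\ast(Q; A) \to C^\rck_\ast(Q; A) / C^\dgn_\ast(Q; A) = C^\qdl_\ast(Q; A)\) restricts to the claimed isomorphism on the summand \(\alpha_\ast C^\rck_\ast(Q; A)\).

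The main obstacle in a fully self-contained treatment is the verification that \(\alpha_\ast\) actually commutes with the rack boundary \(\partial_\ast\): because \(\partial\) is built from the quandle action \(\lhd\), which does not distribute over the formal differences \(x_i - x_{i-1}\) appearing in the definition of \(\alpha_n\), this requires a fairly intricate telescoping computation rather than a direct manipulation of the product formula. Everything else in the theorem is formal bookkeeping once the two properties above are in hand.
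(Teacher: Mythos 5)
Your proposal is correct, and it in fact supplies more argument than the paper does: the paper states this theorem as a quotation from [LN] and gives no proof at all, deferring even the fact that \(\alpha_\ast\) is a chain map to that reference. Your two observations are exactly the right ones and are verified correctly: the vanishing of \(\alpha_n\) on \(C^\dgn_n(Q;A)\) because some factor \(x_{i+1}-x_i\) is zero, and the expansion over subsets \(S\subseteq\{2,\ldots,n\}\) in which every term with \(S\neq\emptyset\) is degenerate (at \(k=\min S\) the \((k-1)\)-st and \(k\)-th entries agree, reading the first entry as \(x_1\) when \(k=2\)), so that \(\alpha_n(c)\equiv c \pmod{C^\dgn_n(Q;A)}\). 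From these the decomposition \(c=(c-\alpha_n(c))+\alpha_n(c)\) and your directness argument give the module-level splitting, and once \(\alpha_\ast\partial=\partial\alpha_\ast\) is granted, \(\im\alpha_\ast\) is a subcomplex and the quotient map \(\rho\) restricts to an isomorphism \(\alpha_\ast C^\rck_\ast(Q;A)\cong C^\qdl_\ast(Q;A)\), as you say. The one ingredient you leave to [LN] — the chain-map property, which genuinely requires a telescoping computation because \(\lhd\) does not interact simply with the formal differences — is precisely the ingredient the paper also imports from [LN], so your treatment is at least as complete as the paper's and reconstructs the splitting mechanism that the paper only uses implicitly (via the induced splitting homomorphism \(\alpha_\ast\colon H^\rck_\ast\to H^\qdl_\ast\)).
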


Now we have a projection \(H^\rck_\ast(Q; A) 
\to H^\qdl_\ast(Q; A)\), denoted also by \(\alpha_\ast\), 
and call it the \textbf{splitting homomorphism}.
Theorem \ref{TH:homology splitting} 
and Corollary \ref{COR:cohomology splitting} are 
direct consequences of this theorem.

\subsection{Two homomorphisms on diagrams.}
\label{SEC:homomorphism diagram}

At first, we give a diagrammatic translation 
of the shifting homomorphisms.
Let \(R\) be a rack.
As noticed in \S \ref{SEC:colouring}, 
every rack \(n\)-chain \(c\) of \(R\) can be represented 
by an \(R\)-shadow coloured \((n - 1)\)-diagram \(D\). 
The first factor of each basis of \(c\) 
corresponds to the shadow colour of some region,
therefore, we can see that the \((n - 1)\)-chain 
\(\sigma_n(c)\) is represented by the \(R\)-coloured 
\((n - 1)\)-diagram obtained by disregarding 
the shadow colours of \(D\). 
We denote this diagram by \(\sigma_n D\). 

Considering the inverse operation, 
that is, to give shadow colours to regions 
of a coloured diagram, 
we can make some higher homology classes 
from a lower one. 
In \S \ref{CHAP:lemma},
we have already seen sufficient conditions for a diagram 
to have an additional shadow colouring. \\

Splitting homomorphism is more complicated 
to be translated diagrammatically. 
For each cycle \(c \in C^\qdl_n(Q)\) 
of a quandle \(Q\), 
let \(\hat{c}\) be an element of \(C^\rck_n(Q)\) 
such that \(\hat{c}\) is mapped to \(c\) 
through the canonical projection \(\rho\): 
\(C^\rck_n(Q) \to C^\qdl_n(Q)\). 
Similarly as in Theorem \ref{TH:realization}, 
we have a shadow coloured diagram \(D\) 
which represents \(\hat{c}\). 
Though the diagram representing a rack cycle 
can be supposed to be on some closed manifold, 
\(D\) may be a diagram with boundary in this case. 
Since \(\partial_n(c) = 0\) holds 
in \(C^\qdl_\ast(Q) = C^\rck_\ast(Q) / C^\dgn_\ast(Q)\), 
\(\partial D\) can be supposed to represent 
the degeneracy chain \(\partial_n(\hat{c})\). 

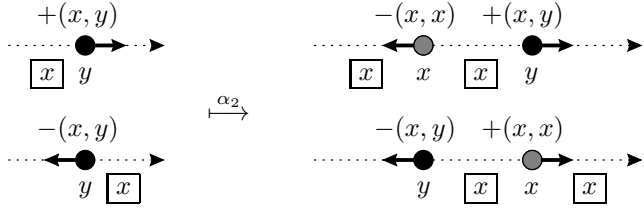
\begin{figure}[th]
\begin{center}
\unitlength 0.1in
\begin{picture}( 33.1400, 11.0000)(  1.9200, -12.3000)
%
%
\special{pn 8}%
\special{pa 200 400}%
\special{pa 1000 400}%
\special{dt 0.045}%
\special{sh 1}%
\special{pn 4}%
\special{pa 1010 400}%
\special{pa 940 370}%
\special{pa 950 400}%
\special{pa 940 430}%
\special{pa 1010 400}%
\special{fp}%
%
\special{pn 20}%
\special{pa 600 400}%
\special{pa 750 400}%
\special{fp}%
\special{sh 1}%
\special{pn 4}%
\special{pa 810 400}%
\special{pa 740 370}%
\special{pa 750 400}%
\special{pa 740 430}%
\special{pa 810 400}%
\special{fp}%
%
\special{pn 8}%
\special{sh 1}%
\special{ar 600 400 50 50  0.0000000 6.2831853}%
%
\put(5.6000,-2.4000){\makebox(0,0){\(+ (x, y)\)}}%
%
\put(4.0000,-5.5000){\makebox(0,0){\fbox{\(x\)}}}%
%
\put(6.0000,-5.5900){\makebox(0,0){\(y\)}}%
%
%
\special{pn 8}%
\special{pa 1800 400}%
\special{pa 3450 400}%
\special{dt 0.045}%
\special{sh 1}%
\special{pn 4}%
\special{pa 3510 400}%
\special{pa 3440 370}%
\special{pa 3450 400}%
\special{pa 3440 430}%
\special{pa 3510 400}%
\special{fp}%
%
\special{pn 20}%
\special{pa 2370 400}%
\special{pa 2220 400}%
\special{fp}%
\special{sh 1}%
\special{pn 4}%
\special{pa 2160 400}%
\special{pa 2230 370}%
\special{pa 2220 400}%
\special{pa 2230 430}%
\special{pa 2160 400}%
\special{fp}%
%
\special{pn 8}%
\special{sh 0}%
\special{ar 2370 400 50 50  0.0000000 6.2831853}%
\special{pn 8}%
\special{sh 0.5}%
\special{ar 2370 400 50 50  0.0000000 6.2831853}%
%
\special{pn 20}%
\special{pa 2940 400}%
\special{pa 3090 400}%
\special{fp}%
\special{sh 1}%
\special{pn 4}%
\special{pa 3150 400}%
\special{pa 3080 370}%
\special{pa 3090 400}%
\special{pa 3080 430}%
\special{pa 3150 400}%
\special{fp}%
%
\special{pn 8}%
\special{sh 1}%
\special{ar 2940 400 50 50  0.0000000 6.2831853}%
%
\put(23.2500,-2.4000){\makebox(0,0){\(- (x, x)\)}}%
%
\put(28.9000,-2.4000){\makebox(0,0){\(+ (x, y)\)}}%
%
\put(20.7000,-5.5000){\makebox(0,0){\fbox{\(x\)}}}%
%
\put(23.7000,-5.5000){\makebox(0,0){\(x\)}}%
%
\put(26.7000,-5.5000){\makebox(0,0){\fbox{\(x\)}}}%
%
\put(29.3500,-5.5900){\makebox(0,0){\(y\)}}%
%
%
\special{pn 8}%
\special{pa  200 1000}%
\special{pa  950 1000}%
\special{dt 0.045}%
\special{sh 1}%
\special{pn 4}%
\special{pa 1010 1000}%
\special{pa  940  970}%
\special{pa  950 1000}%
\special{pa  940 1030}%
\special{pa 1010 1000}%
\special{fp}%
%
\special{pn 8}%
\special{sh 1}%
\special{ar 600 1000 50 50  0.0000000 6.2831853}%
%
\special{pn 20}%
\special{pa 600 1000}%
\special{pa 450 1000}%
\special{fp}%
\special{sh 1}%
\special{pn 4}%
\special{pa 390 1000}%
\special{pa 460 970}%
\special{pa 450 1000}%
\special{pa 460 1030}%
\special{pa 390 1000}%
\special{fp}%
%
\put(5.6000,-8.4000){\makebox(0,0){\(- (x, y)\)}}%
%
\put(8.0000,-11.5000){\makebox(0,0){\fbox{\(x\)}}}%
%
\put(6.0000,-11.5900){\makebox(0,0){\(y\)}}%
%
%
\special{pn 8}%
\special{pa 1800 1000}%
\special{pa 3450 1000}%
\special{dt 0.045}%
\special{sh 1}%
\special{pn 4}%
\special{pa 3510 1000}%
\special{pa 3440 970}%
\special{pa 3450 1000}%
\special{pa 3440 1030}%
\special{pa 3510 1000}%
\special{fp}%
%
\special{pn 20}%
\special{pa 2370 1000}%
\special{pa 2220 1000}%
\special{fp}%
\special{sh 1}%
\special{pn 4}%
\special{pa 2160 1000}%
\special{pa 2230 970}%
\special{pa 2220 1000}%
\special{pa 2230 1030}%
\special{pa 2160 1000}%
\special{fp}%
%
\special{pn 8}%
\special{sh 1}%
\special{ar 2370 1000 50 50  0.0000000 6.2831853}%
%
\special{pn 20}%
\special{pa 2940 1000}%
\special{pa 3090 1000}%
\special{fp}%
\special{sh 1}%
\special{pn 4}%
\special{pa 3150 1000}%
\special{pa 3080 970}%
\special{pa 3090 1000}%
\special{pa 3080 1030}%
\special{pa 3150 1000}%
\special{fp}%
%
\special{pn 8}%
\special{sh 0}%
\special{ar 2940 1000 50 50  0.0000000 6.2831853}%
\special{pn 8}%
\special{sh 0.5}%
\special{ar 2940 1000 50 50  0.0000000 6.2831853}%
%
\put(23.2500,-8.4000){\makebox(0,0){\(- (x, y)\)}}%
%
\put(28.9000,-8.4000){\makebox(0,0){\(+ (x, x)\)}}%
%
\put(32.4000,-11.5000){\makebox(0,0){\fbox{\(x\)}}}%
%
\put(29.3500,-11.5000){\makebox(0,0){\(x\)}}%
%
\put(26.7000,-11.5000){\makebox(0,0){\fbox{\(x\)}}}%
%
\put(23.7000,-11.5900){\makebox(0,0){\(y\)}}%
%
%
\put(13.5000,-7.2000){\makebox(0,0){%
\(\stackrel{\alpha_2}{\longmapsto}\)}}%
\end{picture}%
\end{center}
\caption{Modification of shadow coloured 1-diagrams.}
\label{FIG:1-diagram modification}
\end{figure}
Our purpose here is to realise \(\alpha_\ast\) 
diagrammatically as an operation to make 
a diagram on a closed manifold representing 
a given quandle cycle. 
Obviously, from Theorem \ref{TH:homology splitting}, 
each quandle homology class can be represented 
by some rack cycle, and thus it can be represented 
by a (shadow) coloured diagram on a closed manifold. 
However, since we are using a diagram to obtain 
higher homology classes, 
it is necessary to examine that every diagram representing 
the quandle cycle can be transformed to another 
on a closed manifold. \\

We are concerned only with 
cases of \(Q\)-shadow coloured \(1\)- or \(2\)-diagrams.
In these cases,
splitting homomorphisms are explicitly 
in the following forms:
\begin{quote}
\(\alpha_2(x, y) = x \bullet (y - x) 
= (x, y) - (x, x)\),\\
\(\alpha_3(x, y, z) 
= x \bullet (y - x) \bullet (z - y) \\
\hskip43.2pt = (x, y, z) - (x, y, y) 
+ (x, x, y) - (x, x, z)\).
\end{quote}

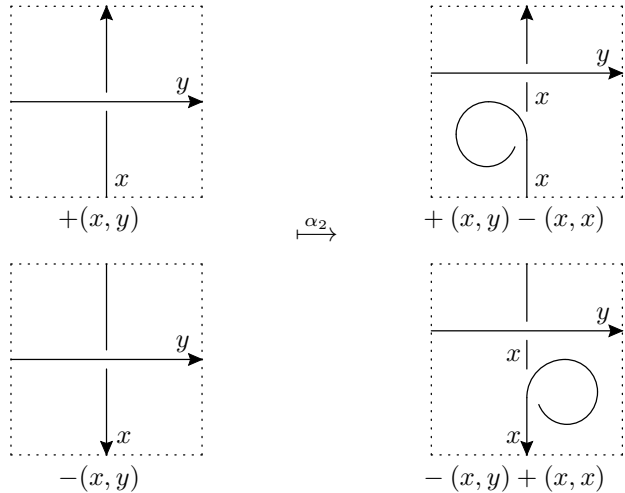
\begin{figure}[ht]
\begin{center}
\unitlength 0.1in%
\begin{picture}( 32.0800, 26.2000)(  3.9200,-29.5000)%
%
\special{pn 8}%
\special{pa 400 400}%
\special{pa 1400 400}%
\special{pa 1400 1400}%
\special{pa 400 1400}%
\special{pa 400 400}%
\special{dt 0.045}%
%
\special{pn 8}%
\special{pa 400 900}%
\special{pa 1400 900}%
\special{fp}%
\special{pn 4}%
\special{sh 1}%
\special{pa 1400 900}%
\special{pa 1330 870}%
\special{pa 1340 900}%
\special{pa 1330 930}%
\special{pa 1400 900}%
\special{fp}%
%
\special{pn 8}%
\special{pa 900 1400}%
\special{pa 900 950}%
\special{fp}%
\special{pn 8}%
\special{pa 900 850}%
\special{pa 900 400}%
\special{fp}%
\special{pn 4}%
\special{sh 1}%
\special{pa 900 400}%
\special{pa 870 470}%
\special{pa 900 460}%
\special{pa 930 470}%
\special{pa 900 400}%
\special{fp}%
%
\put(9.8000,-13.1000){\makebox(0,0){\(x\)}}%
%
\put(13.0000,-8.2000){\makebox(0,0){\(y\)}}%
%
\put(8.6000,-15.2000){\makebox(0,0){\(+ (x, y)\)}}%
%
%
\special{pn 8}%
\special{pa 2600 400}%
\special{pa 3600 400}%
\special{pa 3600 1400}%
\special{pa 2600 1400}%
\special{pa 2600 400}%
\special{dt 0.045}%
%
\special{pn 8}%
\special{pa 2600 750}%
\special{pa 3600 750}%
\special{fp}%
\special{pn 4}%
\special{sh 1}%
\special{pa 3600 750}%
\special{pa 3530 720}%
\special{pa 3540 750}%
\special{pa 3530 780}%
\special{pa 3600 750}%
\special{fp}%
%
\special{pn 8}%
\special{pa 3100 1400}%
\special{pa 3100 1100}%
\special{fp}%
\special{pn 8}%
\special{ar 2900 1100 200 200  4.7123890 6.2831853}%
\special{pn 8}%
\special{ar 2900 1070 170 170  3.1415927 4.7123890}%
\special{pn 8}%
\special{ar 2890 1070 160 170  0.4000000 3.1415927}%
\special{pn 8}%
\special{pa 3100 950}%
\special{pa 3100 800}%
\special{fp}%
\special{pn 8}%
\special{pa 3100 700}%
\special{pa 3100 400}%
\special{fp}%
\special{pn 4}%
\special{sh 1}%
\special{pa 3100 400}%
\special{pa 3070 470}%
\special{pa 3100 460}%
\special{pa 3130 470}%
\special{pa 3100 400}%
\special{fp}%
%
\put(31.8000,-13.1000){\makebox(0,0){\(x\)}}%
%
\put(31.8000,-8.8000){\makebox(0,0){\(x\)}}%
%
\put(35.0000,-6.7000){\makebox(0,0){\(y\)}}%
%
\put(30.2000,-15.2000){\makebox(0,0){\({} + (x, y) - (x, x)\)}}%
%
%
\special{pn 8}%
\special{pa 400 1750}%
\special{pa 1400 1750}%
\special{pa 1400 2750}%
\special{pa 400 2750}%
\special{pa 400 1750}%
\special{dt 0.045}%
%
\special{pn 8}%
\special{pa 400 2250}%
\special{pa 1400 2250}%
\special{fp}%
\special{pn 4}%
\special{sh 1}%
\special{pa 1400 2250}%
\special{pa 1330 2220}%
\special{pa 1340 2250}%
\special{pa 1330 2280}%
\special{pa 1400 2250}%
\special{fp}%
%
\special{pn 8}%
\special{pa 900 1750}%
\special{pa 900 2200}%
\special{fp}%
\special{pn 8}%
\special{pa 900 2300}%
\special{pa 900 2750}%
\special{fp}%
\special{pn 4}%
\special{sh 1}%
\special{pa 900 2750}%
\special{pa 870 2680}%
\special{pa 900 2690}%
\special{pa 930 2680}%
\special{pa 900 2750}%
\special{fp}%
%
\put(9.9000,-26.6000){\makebox(0,0){\(x\)}}%
%
\put(13.0000,-21.7000){\makebox(0,0){\(y\)}}%
%
\put(8.6000,-28.7000){\makebox(0,0){\(- (x, y)\)}}%
%
%
\special{pn 8}%
\special{pa 2600 1750}%
\special{pa 3600 1750}%
\special{pa 3600 2750}%
\special{pa 2600 2750}%
\special{pa 2600 1750}%
\special{dt 0.045}%
%
\special{pn 8}%
\special{pa 2600 2100}%
\special{pa 3600 2100}%
\special{fp}%
\special{pn 4}%
\special{sh 1}%
\special{pa 3600 2100}%
\special{pa 3530 2070}%
\special{pa 3540 2100}%
\special{pa 3530 2130}%
\special{pa 3600 2100}%
\special{fp}%
%
\special{pn 8}%
\special{pa 3100 1750}%
\special{pa 3100 2050}%
\special{fp}%
\special{pn 8}%
\special{pa 3100 2150}%
\special{pa 3100 2300}%
\special{fp}%
\special{pn 8}%
\special{ar 3310 2420 160 170  0.0000000 2.7415927}%
\special{pn 8}%
\special{ar 3300 2420 170 170  4.7123890 6.2831854}%
\special{pn 8}%
\special{ar 3300 2450 200 200  3.1415927 4.7123890}%
\special{pn 8}%
\special{pa 3100 2450}%
\special{pa 3100 2750}%
\special{fp}%
\special{pn 4}%
\special{sh 1}%
\special{pa 3100 2750}%
\special{pa 3070 2680}%
\special{pa 3100 2690}%
\special{pa 3130 2680}%
\special{pa 3100 2750}%
\special{fp}%
%
\put(30.3000,-26.6000){\makebox(0,0){\(x\)}}%
%
\put(30.3000,-22.3000){\makebox(0,0){\(x\)}}%
%
\put(35.0000,-20.2000){\makebox(0,0){\(y\)}}%
%
\put(30.2000,-28.7000){\makebox(0,0){\({} - (x, y) + (x, x)\)}}%
%
%
\put(20.0000,-15.6500){\makebox(0,0){%
\(\stackrel{\alpha_2}{\longmapsto}\)}}%
\end{picture}%
\end{center}
\caption{Modification of coloured 2-diagrams.}
\label{FIG:2-diagram twist}
\end{figure}
As for quandle \(2\)-cycles, 
since \(C^\dgn_1(Q)\) is a zero module,
they are also rack cycles in themselves.
Thus a quandle \(2\)-cycle can be represented 
by a shadow coloured \(1\)-diagram on a circle 
or a disjoint union of finitely many circles.
So it is sufficient to add some points on this diagram 
as shown in Figure \ref{FIG:1-diagram modification}.
The new diagram \(D'\) obtained by this operation 
obviously represents \(\alpha_2(\hat{c})\). \\

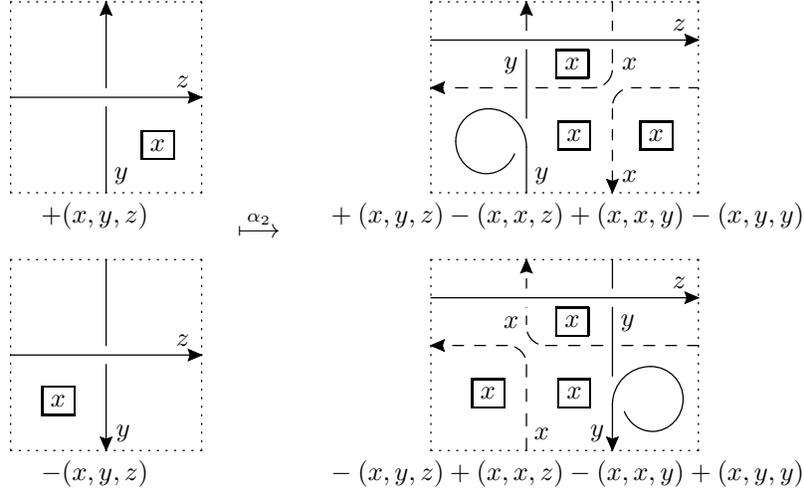
\begin{figure}[ht]
\begin{center}
\unitlength 0.1in
\begin{picture}( 41.6500, 26.0000)(  3.9200,-29.5000)
%
\special{pn 8}%
\special{pa 400 400}%
\special{pa 1400 400}%
\special{pa 1400 1400}%
\special{pa 400 1400}%
\special{pa 400 400}%
\special{dt 0.045}%
%
\special{pn 8}%
\special{pa 400 900}%
\special{pa 1400 900}%
\special{fp}%
\special{pn 4}%
\special{sh 1}%
\special{pa 1400 900}%
\special{pa 1330 870}%
\special{pa 1340 900}%
\special{pa 1330 930}%
\special{pa 1400 900}%
\special{fp}%
%
\special{pn 8}%
\special{pa 900 1400}%
\special{pa 900 950}%
\special{fp}%
\special{pn 8}%
\special{pa 900 850}%
\special{pa 900 400}%
\special{fp}%
\special{pn 4}%
\special{sh 1}%
\special{pa 900 400}%
\special{pa 870 470}%
\special{pa 900 460}%
\special{pa 930 470}%
\special{pa 900 400}%
\special{fp}%
%
\put(11.7000,-11.5000){\makebox(0,0){\fbox{\(x\)}}}%
%
\put(9.8000,-13.1000){\makebox(0,0){\(y\)}}%
%
\put(13.0000,-8.2000){\makebox(0,0){\(z\)}}%
%
\put(8.4000,-15.2000){\makebox(0,0){\(+ (x, y, z)\)}}%
%
%
\special{pn 8}%
\special{pa 2600 400}%
\special{pa 4000 400}%
\special{pa 4000 1400}%
\special{pa 2600 1400}%
\special{pa 2600 400}%
\special{dt 0.045}%
%
\special{pn 8}%
\special{pa 2600 600}%
\special{pa 4000 600}%
\special{fp}%
\special{pn 4}%
\special{sh 1}%
\special{pa 4000 600}%
\special{pa 3930 570}%
\special{pa 3940 600}%
\special{pa 3930 630}%
\special{pa 4000 600}%
\special{fp}%
%
\special{pn 8}%
\special{pa 3100 1400}%
\special{pa 3100 1160}%
\special{fp}%
\special{pn 8}%
\special{ar 2900 1160 200 200  4.7123890 6.2831853}%
\special{pn 8}%
\special{ar 2900 1130 170 170  3.1415927 4.7123890}%
\special{pn 8}%
\special{ar 2890 1130 160 170  0.4000000 3.1415927}%
\special{pn 8}%
\special{pa 3100 1010}%
\special{pa 3100 650}%
\special{fp}%
\special{pn 8}%
\special{pa 3100 550}%
\special{pa 3100 400}%
\special{fp}%
\special{pn 4}%
\special{sh 1}%
\special{pa 3100 400}%
\special{pa 3070 470}%
\special{pa 3100 460}%
\special{pa 3130 470}%
\special{pa 3100 400}%
\special{fp}%
%
\special{pn 8}%
\special{pa 3550 400}%
\special{pa 3550 550}%
\special{da 0.06}%
\special{pn 8}%
\special{pa 3550 650}%
\special{pa 3550 720}%
\special{da 0.06}%
\special{pn 8}%
\special{ar 3450 750 100 100  0.1000000 0.9853932}%
\special{ar 3450 750 100 100  1.3853932 1.5707863}%
\special{pn 8}%
\special{pa 3450 850}%
\special{pa 3150 850}%
\special{da 0.06}%
\special{pn 8}%
\special{pa 3050 850}%
\special{pa 2600 850}%
\special{da 0.06}%
\special{pn 4}%
\special{sh 1}%
\special{pa 2600 850}%
\special{pa 2670 880}%
\special{pa 2660 850}%
\special{pa 2670 820}%
\special{pa 2600 850}%
\special{fp}%
%
\special{pn 8}%
\special{pa 4000 850}%
\special{pa 3650 850}%
\special{da 0.06}%
\special{pn 8}%
\special{ar 3650 950 100 100  3.3415927 4.2123890}%
\special{pn 8}%
\special{pa 3550 960}%
\special{pa 3550 1340}%
\special{da 0.06}%
\special{pn 4}%
\special{sh 1}%
\special{pa 3550 1400}%
\special{pa 3580 1330}%
\special{pa 3550 1340}%
\special{pa 3520 1330}%
\special{pa 3550 1400}%
\special{fp}%
%
\put(33.4000,-7.2500){\makebox(0,0){\fbox{\(x\)}}}%
%
\put(33.5000,-11.0000){\makebox(0,0){\fbox{\(x\)}}}%
%
\put(37.8000,-11.0000){\makebox(0,0){\fbox{\(x\)}}}%
%
\put(36.4000,-7.2500){\makebox(0,0){\(x\)}}%
%
\put(36.4000,-13.1000){\makebox(0,0){\(x\)}}%
%
\put(31.8000,-13.1000){\makebox(0,0){\(y\)}}%
%
\put(30.2000,-7.3000){\makebox(0,0){\(y\)}}%
%
\put(39.0000,-5.2000){\makebox(0,0){\(z\)}}%
%
\put(33.0000,-15.2000){\makebox(0,0){%
\({} + (x, y, z) - (x, x, z) + (x, x, y) - (x, y, y)\)}}%
%
%
\special{pn 8}%
\special{pa 400 1750}%
\special{pa 1400 1750}%
\special{pa 1400 2750}%
\special{pa 400 2750}%
\special{pa 400 1750}%
\special{dt 0.045}%
%
\special{pn 8}%
\special{pa 400 2250}%
\special{pa 1400 2250}%
\special{fp}%
\special{pn 4}%
\special{sh 1}%
\special{pa 1400 2250}%
\special{pa 1330 2220}%
\special{pa 1340 2250}%
\special{pa 1330 2280}%
\special{pa 1400 2250}%
\special{fp}%
%
\special{pn 8}%
\special{pa 900 1750}%
\special{pa 900 2200}%
\special{fp}%
\special{pn 8}%
\special{pa 900 2300}%
\special{pa 900 2750}%
\special{fp}%
\special{pn 4}%
\special{sh 1}%
\special{pa 900 2750}%
\special{pa 870 2680}%
\special{pa 900 2690}%
\special{pa 930 2680}%
\special{pa 900 2750}%
\special{fp}%
%
\put(6.5000,-24.9000){\makebox(0,0){\fbox{\(x\)}}}%
%
\put(9.9000,-26.6000){\makebox(0,0){\(y\)}}%
%
\put(13.0000,-21.7000){\makebox(0,0){\(z\)}}%
%
\put(8.4000,-28.7000){\makebox(0,0){\(- (x, y, z)\)}}%
%
%
\special{pn 8}%
\special{pa 2600 1750}%
\special{pa 4000 1750}%
\special{pa 4000 2750}%
\special{pa 2600 2750}%
\special{pa 2600 1750}%
\special{dt 0.045}%
%
\special{pn 8}%
\special{pa 2600 1950}%
\special{pa 4000 1950}%
\special{fp}%
\special{pn 4}%
\special{sh 1}%
\special{pa 4000 1950}%
\special{pa 3930 1920}%
\special{pa 3940 1950}%
\special{pa 3930 1980}%
\special{pa 4000 1950}%
\special{fp}%
%
\special{pn 8}%
\special{pa 3550 1750}%
\special{pa 3550 1900}%
\special{fp}%
\special{pn 8}%
\special{pa 3550 2000}%
\special{pa 3550 2360}%
\special{fp}%
\special{pn 8}%
\special{ar 3760 2480 160 170  0.0000000 2.7415927}%
\special{pn 8}%
\special{ar 3750 2480 170 170  4.7123890 6.2831854}%
\special{pn 8}%
\special{ar 3750 2510 200 200  3.1415927 4.7123890}%
\special{pn 8}%
\special{pa 3550 2510}%
\special{pa 3550 2750}%
\special{fp}%
\special{pn 4}%
\special{sh 1}%
\special{pa 3550 2750}%
\special{pa 3520 2680}%
\special{pa 3550 2690}%
\special{pa 3580 2680}%
\special{pa 3550 2750}%
\special{fp}%
%
\special{pn 8}%
\special{pa 4000 2200}%
\special{pa 3600 2200}%
\special{da 0.06}%
\special{pn 8}%
\special{pa 3500 2200}%
\special{pa 3200 2200}%
\special{da 0.06}%
\special{pn 8}%
\special{ar 3200 2100 100 100  1.9707863 2.8415927}%
\special{pn 8}%
\special{pa 3100 2090}%
\special{pa 3100 2000}%
\special{da 0.06}%
\special{pn 8}%
\special{pa 3100 1900}%
\special{pa 3100 1750}%
\special{da 0.06}%
\special{pn 4}%
\special{sh 1}%
\special{pa 3100 1750}%
\special{pa 3130 1820}%
\special{pa 3100 1810}%
\special{pa 3070 1820}%
\special{pa 3100 1750}%
\special{fp}%
%
\special{pn 8}%
\special{pa 3100 2750}%
\special{pa 3100 2300}%
\special{da 0.06}%
\special{pn 8}%
\special{ar 3000 2300 100 100  5.1123890 5.8831854}%
\special{pn 8}%
\special{pa 3000 2200}%
\special{pa 2600 2200}%
\special{da 0.06}%
\special{pn 4}%
\special{sh 1}%
\special{pa 2600 2200}%
\special{pa 2670 2170}%
\special{pa 2660 2200}%
\special{pa 2670 2230}%
\special{pa 2600 2200}%
\special{fp}%
%
\put(33.4000,-20.7500){\makebox(0,0){\fbox{\(x\)}}}%
%
\put(33.5000,-24.5000){\makebox(0,0){\fbox{\(x\)}}}%
%
\put(29.0000,-24.5000){\makebox(0,0){\fbox{\(x\)}}}%
%
\put(31.8000,-26.6000){\makebox(0,0){\(x\)}}%
%
\put(30.2000,-20.8000){\makebox(0,0){\(x\)}}%
%
\put(34.7000,-26.6000){\makebox(0,0){\(y\)}}%
%
\put(36.3000,-20.8000){\makebox(0,0){\(y\)}}%
%
\put(39.0000,-18.7000){\makebox(0,0){\(z\)}}%
%
\put(33.0000,-28.7000){\makebox(0,0){%
\({} - (x, y, z) + (x, x, z) - (x, x, y) + (x, y, y)\)}}%
%
%
\put(17.0000,-15.6500){\makebox(0,0){%
\(\stackrel{\alpha_2}{\longmapsto}\)}}%
\end{picture}%
\end{center}
\caption{Modification of shadow coloured 2-diagrams.}
\label{FIG:2-diagram parallelisation}
\end{figure}
Before considering the case of \(3\)-cycles,
we will discuss the diagrammatic translation 
of \(\alpha_2\) by coloured diagrams without shadow colourings.
Also in this case,
we can suppose a \(2\)-diagram 
representing a quandle \(2\)-cycle 
to be on a closed surface 
or a disjoint union of closed surfaces.
To add degeneracy terms \((x, x)\) of \(\alpha_2(x, y)\),
we modify the diagram at each crossing 
by RI transformations, as drawn 
in Figure \ref{FIG:2-diagram twist}.
In this figure,
the initial under-arc is twisted 
to generate a term \((x, x)\). \\

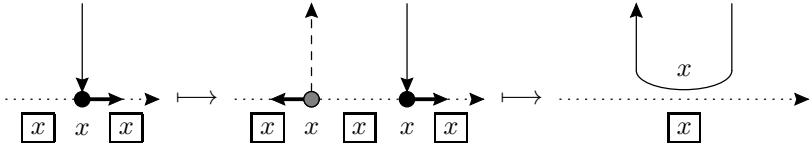
\begin{figure}[ht]
\begin{center}
\unitlength 0.1in
\begin{picture}( 42.1400, 8.0000)(  1.9200, -12.3000)
%
%
\special{pn 8}%
\special{pa  200 1000}%
\special{pa  950 1000}%
\special{dt 0.045}%
\special{sh 1}%
\special{pn 4}%
\special{pa 1000 1000}%
\special{pa  930  970}%
\special{pa  940 1000}%
\special{pa  930 1030}%
\special{pa 1000 1000}%
\special{fp}%
%
\special{pn 8}%
\special{pa  600  500}%
\special{pa  600  960}%
\special{fp}%
\special{sh 1}%
\special{pn 4}%
\special{pa  600  900}%
\special{pa  630  890}%
\special{pa  600  960}%
\special{pa  570  890}%
\special{pa  600  900}%
\special{fp}%
%
\special{pn 8}%
\special{sh 1}%
\special{ar 600 1000 40 40  0.0000000 6.2831853}%
%
\special{pn 20}%
\special{pa 600 1000}%
\special{pa 750 1000}%
\special{fp}%
\special{sh 1}%
\special{pn 4}%
\special{pa 810 1000}%
\special{pa 740 970}%
\special{pa 750 1000}%
\special{pa 740 1030}%
\special{pa 810 1000}%
\special{fp}%
%
\put(3.7000,-11.5000){\makebox(0,0){\fbox{\(x\)}}}%
%
\put(8.3000,-11.5000){\makebox(0,0){\fbox{\(x\)}}}%
%
\put(6.0000,-11.5900){\makebox(0,0){\(x\)}}%
%
%
\special{pn 8}%
\special{pa 1400 1000}%
\special{pa 2700 1000}%
\special{dt 0.045}%
\special{sh 1}%
\special{pn 4}%
\special{pa 2700 1000}%
\special{pa 2630  970}%
\special{pa 2640 1000}%
\special{pa 2630 1030}%
\special{pa 2700 1000}%
\special{fp}%
%
\special{pn 8}%
\special{pa 1800  960}%
\special{pa 1800  500}%
\special{da 0.04}%
\special{sh 1}%
\special{pn 4}%
\special{pa 1800  500}%
\special{pa 1830  570}%
\special{pa 1800  560}%
\special{pa 1770  570}%
\special{pa 1800  500}%
\special{fp}%
%
\special{pn 20}%
\special{pa 1800 1000}%
\special{pa 1650 1000}%
\special{fp}%
\special{sh 1}%
\special{pn 4}%
\special{pa 1590 1000}%
\special{pa 1660 970}%
\special{pa 1650 1000}%
\special{pa 1660 1030}%
\special{pa 1590 1000}%
\special{fp}%
%
\special{pn 8}%
\special{sh 0}%
\special{ar 1800 1000 40 40  0.0000000 6.2831853}%
\special{pn 8}%
\special{sh 0.5}%
\special{ar 1800 1000 40 40  0.0000000 6.2831853}%
%
\special{pn 8}%
\special{pa 2300  500}%
\special{pa 2300  960}%
\special{fp}%
\special{sh 1}%
\special{pn 4}%
\special{pa 2300  900}%
\special{pa 2330  890}%
\special{pa 2300  960}%
\special{pa 2270  890}%
\special{pa 2300  900}%
\special{fp}%
%
\special{pn 20}%
\special{pa 2300 1000}%
\special{pa 2450 1000}%
\special{fp}%
\special{sh 1}%
\special{pn 4}%
\special{pa 2510 1000}%
\special{pa 2440  970}%
\special{pa 2450 1000}%
\special{pa 2440 1030}%
\special{pa 2510 1000}%
\special{fp}%
%
\special{pn 8}%
\special{sh 1}%
\special{ar 2300 1000 40 40  0.0000000 6.2831853}%
%
\put(15.7000,-11.5000){\makebox(0,0){\fbox{\(x\)}}}%
%
\put(18.0000,-11.5000){\makebox(0,0){\(x\)}}%
%
\put(20.5000,-11.5000){\makebox(0,0){\fbox{\(x\)}}}%
%
\put(23.0000,-11.5000){\makebox(0,0){\(x\)}}%
%
\put(25.3000,-11.5000){\makebox(0,0){\fbox{\(x\)}}}%
%
%
\special{pn 8}%
\special{pa 3100 1000}%
\special{pa 4400 1000}%
\special{dt 0.045}%
\special{sh 1}%
\special{pn 4}%
\special{pa 4400 1000}%
\special{pa 4330 970}%
\special{pa 4340 1000}%
\special{pa 4330 1030}%
\special{pa 4400 1000}%
\special{fp}%
%
\special{pn 8}%
\special{pa 3500  850}%
\special{pa 3500  500}%
\special{fp}%
\special{sh 1}%
\special{pn 4}%
\special{pa 3500  500}%
\special{pa 3530  570}%
\special{pa 3500  560}%
\special{pa 3470  570}%
\special{pa 3500  500}%
\special{fp}%
%
\special{pn 8}%
\special{pa 4000  500}%
\special{pa 4000  850}%
\special{fp}%
%
\special{pn 8}%
\special{ar 3750  850 250 100  0.0000000 3.1415926}%
%
\put(37.5000, -8.5000){\makebox(0,0){\(x\)}}%
%
\put(37.5000,-11.5000){\makebox(0,0){\fbox{\(x\)}}}%
%
%
\put(12.0000,-10.0000){\makebox(0,0){\(\longmapsto\)}}%
%
\put(29.0000,-10.0000){\makebox(0,0){\(\longmapsto\)}}%
\end{picture}%
\end{center}
\caption{Modification of 2-diagram on boundary.}
\label{FIG:2-diagram boundary cancellation}
\end{figure}
Fix a quandle \(3\)-cycle \(c \in Z^\qdl_3(Q)\) 
and \(Q\)-shadow coloured \(2\)-diagram \(D\) 
which represents \(\hat{c}\). 
At first, we modify the diagram \(D\) 
so that \(\sigma D\) represents \(\alpha_2\langle D\rangle\) 
as in Figure \ref{FIG:2-diagram twist}. 
Next, 
we will parallelise the arcs of \(D\) 
such that the additional arcs are 
supposed to be in the lowest level and 
to be smoothened near each crossing, 
as drawn in Figure \ref{FIG:2-diagram parallelisation}. 
For simplicity, 
the trivial components to appear are removed there. 

Easily, we can see that the new diagram \(D'\) 
represents \(\alpha_3(\hat{c})\), 
but it still has its boundary. 
Since \(\partial_3(\hat{c})\) is 
a degeneracy \(2\)-chain, 
each additional point on \(\partial D'\) 
lies next to an original point 
with the same colour and with the opposite signature.
Thus, by connecting these pairs with arcs, 
we obtain another diagram \(D''\) with 
trivially coloured boundary, 
which can be capped off by disks. 


\subsection{Construction of shadow diagram classes.}
\label{SEC:construction shadow class}

Let \(L\) be a link 
and \(D\) a diagram of \(L\) on \(\sphere^2\). 
As mentioned in \S \ref{SEC:knot quandle}, 
the diagram \(D\) is canonically coloured by \(Q(L)\), 
thus \(D\) is freely \(Q(L)\)-shadow colourable 
by Corollary \ref{COR:sphere 2-diagram}.
Denote by \(D_\alpha\) a \(Q(L)\)-shadow coloured diagram 
obtained from \(D\) by colouring the base-region 
with \(\alpha \in Q(L)\). 
The rack \(3\)-chain \(\langle D_\alpha \rangle\) is a cycle,
for it is represented by a diagram on a closed manifold.
Thus, we have a rack homology class 
\([D_\alpha] \in H^\rck_3(Q(L))\) from \(D_\alpha\),
called a \textbf{shadow diagram class} of \(D\).
We also have a quandle homology class \([L_\alpha] 
\in H^\qdl_3(Q(L))\) as the image of \([D_\alpha]\)
via \(\rho_\ast\): \(H^\rck_3(Q(L)) \to H^\qdl_3(Q(L))\).
In parallel with the relation between 
the diagram class and the fundamental class,
we call it a \textbf{shadow fundamental class} of \(L\).

Now, we will show one of the main theorems:

\begin{theorem}
\label{TH:construction theorem}
Let \(L\) be a non-trivial \(n\)-component link. 
\begin{itemize}
\item[a)] The shadow diagram classes \([D_\alpha]\) 
are non-zero elements of \(H^\rck_3(Q(L))\) 
for each diagram \(D\) of \(L\) and \(\alpha \in Q(L)\). 
\item[b)] There exist linearly independent \(n\) shadow 
fundamental classes of \(L\) in \(H_3^\qdl(Q(L))\). 
\item[c)] The third quandle homology group 
\(H_3^\qdl(Q(L))\) splits into the direct sum 
\begin{quote}
\((\bigoplus \Z [L_i]) 
\oplus \Bigl(H_3^\qdl(Q(L))/(\bigoplus \Z [L_i])\Bigr)\), 
\end{quote}
where \([L_1]\), \ldots, \([L_n]\) are 
distinct shadow fundamental classes of \(L\). 
\end{itemize}
\end{theorem}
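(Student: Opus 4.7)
The overall plan is to reduce each part of the theorem to a computation with one of two natural homomorphisms: the shifting homomorphism \(\sigma_\ast\) of \S\ref{SEC:shifting splitting} for part (a), and, for parts (b) and (c), the map on third quandle homology induced by the orbit-quotient quandle homomorphism \(f\colon Q(L)\to T_n\) that sends each arc to the label of its component. The target is a retraction \(H_3^\qdl(Q(L))\to\bigoplus_{i=1}^n\Z\) sending each shadow fundamental class \([L_i]\) to the \(i\)-th standard basis vector.

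For part (a), on the chain level \(\sigma_3\) simply drops the first entry, so \(\sigma_3\langle D_\alpha\rangle=\langle D\rangle\) and hence \(\sigma_\ast[D_\alpha]=[D]\in H_2^\rck(Q(L))\). By Remark~\ref{REM:fundamental class} (invoking Eisermann's Theorem~\ref{TH:second homology}), non-triviality of \(L\) gives \(\rho_\ast[D]=[L]\neq 0\) in \(H_2^\qdl(Q(L))\), so \([D]\neq 0\) in \(H_2^\rck(Q(L))\) and therefore \([D_\alpha]\neq 0\).

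For parts (b) and (c), pick one arc \(\alpha_i\) on each component \(L_i\); by Lemma~\ref{LEM:cobordism 2-diagram}, the class \([L_i]:=[L_{\alpha_i}]\) depends only on the orbit of \(\alpha_i\) in \(Q(L)\), hence only on the component label. Because the rack boundary on \(T_n\) vanishes identically, the indicator of any non-degenerate triple \((a,b,c)\in T_n^3\) (with \(a\neq b\) and \(b\neq c\)) is automatically a quandle \(3\)-cocycle \(\phi_{(a,b,c)}\) of \(T_n\); pulling it back through \(f\) produces a quandle \(3\)-cocycle \(f^\ast\phi_{(a,b,c)}\) of \(Q(L)\). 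Since shadow colours in \(D_{\alpha_j}\) all lie in the orbit of \(\alpha_j\), the evaluation \(f^\ast\phi_{(a,b,c)}[L_{\alpha_j}]\) vanishes unless \(a\) is the label of \(L_j\); otherwise it returns the signed count of crossings with under-arc on \(L_b\) and over-arc on \(L_c\), which for \(b\neq c\) equals \(\operatorname{lk}(L_b,L_c)\). For each \(i\), I then seek a \(\Z\)-linear combination \(\Phi_i\) of the \(f^\ast\phi_{(i,b,c)}\) with \(\Phi_i([L_j])=\delta_{ij}\); the family \((\Phi_i)_{i=1}^n\) gives the desired retraction, supplying both the splitting of (c) and the existence of \(n\) distinct non-zero classes of (b).

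The principal obstacle is precisely where the orbit map is too coarse to see the shadow fundamental classes, namely when all relevant linking numbers involving some component vanish---most notably for a split link (for instance a non-trivial knot split from the rest), where \(f_\ast[L_i]\) itself can be zero. I expect to handle such cases by exploiting the free-product decomposition \(Q(L_1\sqcup L_2)\cong Q(L_1)*Q(L_2)\) of knot quandles of split links to induce a corresponding splitting of \(H_3^\qdl\), reducing the problem to non-split summands where the pullback cocycles do separate the classes. For non-split links with vanishing pairwise linking (Whitehead-type examples) a finer collection of quandle cocycles---built from the non-zero fundamental classes guaranteed by Eisermann's theorem---would be required.
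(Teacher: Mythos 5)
Your part (a) is exactly the paper's argument: $\sigma_3\langle D_\alpha\rangle=\langle D\rangle$, so $\rho_\ast\sigma_\ast[D_\alpha]=[L]\neq 0$ by Eisermann's theorem, hence $[D_\alpha]\neq 0$. That part is correct and needs no further comment.

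For (b) and (c), however, there is a genuine gap, and it is not a peripheral one. The cocycles you propose are pull-backs along $Q(L)\to T_n$ of indicator $3$-cocycles on the trivial quandle, and, as you compute, their values on the classes $[L_j]$ are linking numbers $\mathrm{lk}(L_b,L_c)$. This detection mechanism fails in the most basic cases covered by the theorem: for $n=1$ (a non-trivial knot) the trivial quandle $T_1$ has no non-degenerate triples at all, so your family of cocycles is empty and the retraction cannot even be written down; for non-split links with all pairwise linking numbers zero (Whitehead-type) every evaluation vanishes; and even when linking numbers are non-zero, an integral retraction with $\Phi_i([L_j])=\delta_{ij}$ requires the relevant linking numbers to have greatest common divisor $1$ (already the $(2,4)$-torus link, with linking number $2$, defeats it over $\Z$, and (c) is an integral splitting statement). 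Your two proposed repairs do not close the gap: the claim that $H^\qdl_3$ of a free product of knot quandles splits accordingly is itself a non-trivial unproved assertion, and the ``finer collection of quandle cocycles'' for the remaining cases is exactly the part of the proof that is missing.

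The paper's device, which is what you would need, is different in kind: it does not leave $Q(L)$ at all. Starting from a rack $2$-cocycle $\phi$ of $Q(L)$ itself, which Eisermann's results guarantee can be chosen with $\langle[D],[\phi]\rangle\neq 0$ whenever $L$ is non-trivial, one inflates it to a rack $3$-cocycle supported on a single orbit of shadow colours, $\tilde\phi_\beta(\alpha,a,b)=\phi(a,b)$ if $\alpha$ lies in the orbit of $\beta$ and $0$ otherwise. Since all shadow colours of $D_{\alpha_j}$ lie in the orbit of $\alpha_j$, one gets $\langle\sum_j c_j[D_{\alpha_j}],\tilde\phi_{\alpha_i}\rangle=c_i\langle[D],[\phi]\rangle$, which separates the $n$ classes for every non-trivial link with no hypothesis on linking numbers; the paper then transfers this from rack to quandle homology via the Litherland--Nelson splitting (checking diagrammatically that $\alpha_\ast\rho_\ast[D_\alpha]$ is again a shadow diagram class), and obtains the splitting in (c) from the retraction built out of $\sigma_\ast$ and $\rho_\ast$ onto $\Z[L]$ together with Theorem \ref{TH:homology splitting}. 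In short: your plan for (a) coincides with the paper, but for (b) and (c) you would have to replace the $T_n$-pullback cocycles by orbit-supported inflations of Eisermann's $2$-cocycles (or something equally strong), and also supply the rack-to-quandle transfer step, neither of which is present in your proposal.
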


\begin{remark}
\label{REM:un-uniqueness}
If \(L\) has more than one component, 
the shadow `fundamental' class is not unique. 
It seems to be slightly confusing, 
but, as far as the shadow cocycle invariants concern, 
all the shadow fundamental classes work in the same way. 
\end{remark}

\begin{proof}
We have already noticed that 
the shifting homomorphism is an operation 
to disregard shadow colours of diagrams. 
Thus, it is clear that \(\sigma_3 \langle D_\alpha \rangle 
= \langle D \rangle\), 
which concludes \(\rho_\ast \sigma_\ast[D_\alpha] 
= \rho_\ast[D] = [L]\).
By Theorem \ref{TH:second homology}, the fundamental class 
\([L]\) is non-zero since \(L\) is non-trivial. 
It completes the proof of (a). \\

If \(\alpha\) and \(\beta \in Q(L)\) are connected, 
\(D_\alpha\) and \(D_\beta\) 
represent rack homologous \(3\)-cycles 
by Lemma \ref{LEM:cobordism 2-diagram}, 
so \([D_\alpha] = [D_\beta]\) holds in \(H^\rck_3(Q(L))\). 

Let \(D\) and \(D'\) be diagrams of \(L\) on \(\sphere^2\). 
We can obtain \(D'\) from \(D\) 
by finitely many Reidemeister deformations. 
Through these deformations, 
a \(Q(L)\)-shadow coloured diagram \(D_\alpha\) 
becomes another \(Q(L)\)-shadow coloured diagram \(D'_\beta\). 
Clearly, \(\beta\) is connected with \(\alpha\). 
Therefore, Lemmas \ref{LEM:cobordism 2-diagram}, 
\ref{LEM:R-II R-III deformation} 
and \ref{LEM:R-I deformation} show that 
\(\langle D'_\alpha \rangle\), \(\langle D'_\beta \rangle\) 
and \(\langle D_\alpha \rangle\) are all quandle homologous. 

Thus we conclude that \([L_\alpha]\) is determined 
independently of the choice of diagrams 
and of the shadow colours of the base-region 
as long as the shadow colours are connected. 
It is well known that the knot quandle 
of \(n\)-component link has \(n\) orbits. 
So, it follows that there exist 
at most \(n\) shadow fundamental classes of \(L\). \\

Let \(\alpha_1\), \ldots, \(\alpha_n\) be elements 
of \(Q(L)\) which are not connected each other. 
To prove that \([D_{\alpha_1}]\), \ldots, \([D_{\alpha_n}]\) 
are linearly independent, 
we use some evaluation maps. 
For a \(2\)-cocycle \(\phi \in Z_\rck^2(Q(L))\) 
and an element \(\beta \in Q(L)\), 
define a rack \(3\)-cocycle \(\tilde{\phi}_\beta\) by 
\begin{quote}
\(\tilde{\phi}_\beta(\alpha, a, b) = \left\{
\begin{tabular}{ll}
\(\phi(a, b)\) 
& if \(\alpha\) and \(\beta\) are in the same orbit, \\
\(0\) & otherwise. 
\end{tabular}\right.\)
\end{quote}
Easy computation shows that \(\tilde{\phi}_\beta\) 
is also a cocycle. 
Denote by \(S\) a linear combination 
\(\sum c_i [D_{\alpha_i}]\). 
Clearly by the definition, we have 
\begin{quote}
\(\langle S, [\tilde{\phi}_{\alpha_i}]\rangle 
= c_i \langle [D], [\phi]\rangle\). 
\end{quote}
Thus, if we can choose a non-trivial cocycle \(\phi\) 
such that \(\langle [D], [\phi]\rangle\) is non-zero, 
then the classes \([D_{\alpha_1}]\), \ldots, \([D_{\alpha_n}]\) 
are linearly independent. 
In fact, Eisermann's results in [E] says 
that there exists such a cocycle \(\phi\) of \(Q(L)\) 
when \(L\) is non-trivial. 

By the definition,
the shadow fundamental class \([L_\alpha]\) 
can be written in the form of \(\rho_\ast[D_\alpha]\) 
for some knot diagram \(D\) of \(L\).
As shown in \S \ref{SEC:homomorphism diagram}, 
\(\alpha_\ast \rho_\ast[D_\alpha]\) can be represented 
by some shadow coloured diagram on a closed surface. 
By observing the operations 
in \S \ref{SEC:homomorphism diagram} precisely, 
we can see that the resulting diagram \(D'_\alpha\) from \(D_\alpha\) 
is also on a sphere \(\sphere^2\).
Moreover, the additional arcs of \(D'_\alpha\) are all 
simple closed curves and in the lowest level of the diagram. 
Since such curves on \(\sphere^2\) can be removed 
via Reidemeister deformations II and III, 
the \(3\)-chain \(\langle D'_\alpha\rangle\) 
is rack homologous to \(\langle D_\alpha\rangle\) 
from Lemma \ref{LEM:R-II R-III deformation}, 
which concludes that \([L_\alpha]\) is equal to one of 
the shadow diagram classes.
Our proof of (b) is completed. \\

Fix \(\alpha \in Q(L)\). 
From the fact above, 
if a diagram \(D\) with the canonical colouring 
represents the fundamental class \([L]\) 
in \(H^\rck_2(Q(L))\), 
then the shadow coloured diagram \(D_\alpha\) 
represents the shadow fundamental class \([L_\alpha]\) 
in \(H^\rck_3(Q(L))\). 
Thus we have two homomorphisms 
\begin{quote}
\(H^\qdl_2(Q(L)) \cong \Z[L] \cong \Z[L_\alpha] \subset H^\rck_3(Q(L))\) 
\end{quote}
and 
\begin{quote}
\(H^\rck_3(Q(L)) \stackrel{\sigma_\ast}{\to} 
H^\rck_2(Q(L)) \stackrel{\rho_\ast}{\to} 
H^\qdl_2(Q(L)) \cong \Z[L]\).
\end{quote}
Easily, we can check diagrammatically 
that the composition of them becomes the identity on \(\Z[L]\).
Thus, with Theorem \ref{TH:homology splitting}, 
\(H^\qdl_3(Q(L))\) is proved to split as in the statement of (c).
\end{proof}

\subsection{Relations to shadow cocycle invariants.}
\label{SEC:shadow cocycle invariant}

Shadow cocycle invariants \(\Phi_\phi\) defined in [CJKS] 
are invariants of links 
computed with a quandle \(3\)-cocycle \(\phi\) 
of a finite quandle \(X\). 
Satoh [S] introduced based shadow cocycle invariants 
\(\Phi^\ast_\phi\) of links, and he proved that, 
if \(X\) is a dihedral quandle \(\Z_p\) for a prime 
odd \(p\), an equation \(\Phi_\phi = |X| \cdot \Phi^\ast_\phi\) 
holds for the two invariants. 
Here, we will prove this equation for a connected quandle \(X\) 
in general, by using the concept of shadow fundamental classes. \\

Let \(L\) be a link and \(D\) a diagram of \(L\).
At first, we will define the \textbf{shadow cocycle invariant}
\(\Phi_\phi(L)\) and the \textbf{based shadow cocycle invariant}
\(\Phi^\ast_\phi(L)\) of \(L\). 
Fix a quandle \(3\)-cocycle \(\phi \in Z_\qdl^3(X; A)\) 
of a finite quandle \(X\), 
where the operation of \(A\) is supposed to be 
written as multiplication. 
When \(C\) is an \(X\)-shadow colouring of \(D\), 
we define the \textbf{Boltzmann weight} \(w(c)\) 
of each crossing \(c\) by 
\begin{quote}
\(w(c) = \phi(C(r^\ini_c), C(u^\ini_c), C(o_c))^{\epsilon_c}\). 
\end{quote}
The symbols above follow in \S \ref{SEC:colouring}.
Then, we define the \textbf{whole weight} \(W(C)\) of \(C\) by
the product \(\prod w(c)\) of weights of all crossings.

Both the unbased and the based shadow cocycle invariants
are of the state-sum type.
They use \(X\)-shadow colourings as states,
but they differ on the colourings which they allow as states.

In the case of shadow cocycle invariants,
all \(X\)-shadow colourings are allowed to be states of \(D\).
Therefore, \(\Phi_\phi(L)\) is the sum \(\sum W(C)\),
where \(C\) ranges all \(X\)-shadow colourings.

On the other hand,
when we are concerned with the based shadow cocycle invariants, 
we fix a point \(p\) of an arc of \(D\) which is not a crossing. 
An \(X\)-shadow colouring is allowed to be a state
when the regions \(r^\ini_p\) and \(r^\ter_p\) 
have as same a colour as \(c_p\) (See Figure \ref{FIG:allowed colouring}).
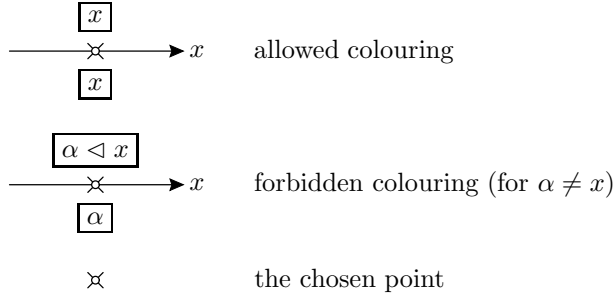
\begin{figure}[ht]
\begin{center}
\unitlength 0.1in
\begin{picture}( 31.7500, 16.1000)(-0.1000, -15.5000)
%
\special{pn 8}%
\special{pa 0000 250}%
\special{pa 0900 250}%
\special{fp}%
\special{sh 1}%
\special{pn 4}%
\special{pa 0910 250}%
\special{pa 0840 220}%
\special{pa 0850 250}%
\special{pa 0840 280}%
\special{pa 0910 250}%
\special{fp}%
%
\special{pn 8}%
\special{pa 410 210}%
\special{pa 490 290}%
\special{fp}%
\special{pa 490 210}%
\special{pa 410 290}%
\special{fp}%
%
\special{pn 8}%
\special{sh 0}%
\special{ar 450 250 20 20  0.0000000 6.2831853}%
%
\put(9.8000,-2.5000){\makebox(0,0){\(x\)}}%
%
\put(4.4900,-4.2500){\makebox(0,0){\fbox{\(x\)}}}%
%
\put(4.4900,-0.7100){\makebox(0,0){\fbox{\(x\)}}}%
%
\put(18.0000,-2.5000){\makebox(0,0){allowed colouring}}%
%
%
\special{pn 8}%
\special{pa 0000 950}%
\special{pa 0900 950}%
\special{fp}%
\special{sh 1}%
\special{pn 4}%
\special{pa 0910 950}%
\special{pa 0840 920}%
\special{pa 0850 950}%
\special{pa 0840 980}%
\special{pa 0910 950}%
\special{fp}%
%
\special{pn 8}%
\special{pa 410 910}%
\special{pa 490 990}%
\special{fp}%
\special{pa 490 910}%
\special{pa 410 990}%
\special{fp}%
%
\special{pn 8}%
\special{sh 0}%
\special{ar 450 950 20 20  0.0000000 6.2831853}%
%
\put(9.8000,-9.5000){\makebox(0,0){\(x\)}}%
%
\put(4.4900,-11.2500){\makebox(0,0){\fbox{\(\alpha\)}}}%
%
\put(4.4900,-7.7100){\makebox(0,0){\fbox{\(\alpha \lhd x\)}}}%
%
\put(22.2500,-9.5000){\makebox(0,0){forbidden colouring %
(for \(\alpha \ne x\))}}%
%
%
\special{pn 8}%
\special{pa 410 1410}%
\special{pa 490 1490}%
\special{fp}%
\special{pa 490 1410}%
\special{pa 410 1490}%
\special{fp}%
%
\special{pn 8}%
\special{sh 0}%
\special{ar 450 1450 20 20  0.0000000 6.2831853}%
%
\put(17.8000,-14.5000){\makebox(0,0){the chosen point}}%
\end{picture}%
\end{center}
\caption{Allowed and forbidden colourings.}
\label{FIG:allowed colouring}
\end{figure}
It is proved in [S] that the sum \(\Phi^\ast_\phi(L)\) of whole weights
of all colourings that satisfy the condition as above 
is invariant of \(L\). \\

\begin{remark}
\label{REM:chosen point}
The invariant \(\Phi^\ast_\phi\) has a name as ``based'', 
for the chosen point \(p\) is called a ``basepoint'' in [S]. 
Since we are necessary to consider the basepoint 
of a manifold where a diagram lies, 
it is confusing to call \(p\) a basepoint. 
Thus, we use the term ``chosen point'' instead of basepoint.
\end{remark}

\begin{theorem}
\label{TH:invariants relation}
If \(X\) is a connected finite quandle,
an equation
\begin{quote}
\(\Phi_\phi(L) = |X| \cdot \Phi^\ast_\phi(L)\) 
\end{quote}
holds between the unbased and 
the based shadow cocycle invariants. 
\end{theorem}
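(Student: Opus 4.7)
The plan is to identify the Boltzmann weight of a shadow colouring with the evaluation of $\phi$ on the third rack homology class represented by the corresponding $X$-shadow coloured diagram, and then to use free shadow colourability (Corollary \ref{COR:sphere 2-diagram}) together with Lemma \ref{LEM:cobordism 2-diagram} to see that, when $X$ is connected, this value depends only on the underlying $X$-colouring and not on the shadow extension.

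First, fix a diagram $D$ of $L$ on $\sphere^2$. By comparing the definition of the Boltzmann weight $w(c) = \phi(C(r^\ini_c), C(u^\ini_c), C(o_c))^{\epsilon_c}$ with the correspondence in Figure \ref{FIG:shadow coloured diagram}, the whole weight $W(C)$ is precisely $\phi\bigl(\langle D_C\rangle\bigr)$, where $D_C$ denotes the $X$-shadow coloured diagram. Since $D_C$ lies on the closed surface $\sphere^2$, $\langle D_C\rangle$ is a rack $3$-cycle and $W(C) = \langle [D_C], [\phi]\rangle$ depends only on the rack homology class $[D_C] \in H^\rck_3(X)$ (here we use that $\phi \in Z^3_\qdl(X) \subset Z^3_\rck(X)$).

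Next, I would enumerate the $X$-shadow colourings of $D$ by splitting them as pairs $(C_0, \alpha)$, where $C_0 \colon \mathcal{C}(D) \to X$ is an $X$-colouring of the underlying diagram and $\alpha \in X$ is a choice of shadow colour for the base-region. By Corollary \ref{COR:sphere 2-diagram}, every $C_0$ extends uniquely to a shadow colouring $D_{C_0,\alpha}$ for each $\alpha \in X$, so
\begin{equation*}
\Phi_\phi(L) = \sum_{C_0} \sum_{\alpha \in X} \langle [D_{C_0,\alpha}], [\phi]\rangle.
\end{equation*}
Now the key step: for each fixed $X$-colouring $C_0$ and any $\alpha, \beta \in X$, the hypothesis that $X$ is connected produces a sequence $(x_i,\epsilon_i)$ in $X$ connecting $\alpha$ to $\beta$, so Lemma \ref{LEM:cobordism 2-diagram} yields $[D_{C_0,\alpha}] = [D_{C_0,\beta}]$ in $H^\rck_3(X)$. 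Consequently the inner sum collapses and
\begin{equation*}
\Phi_\phi(L) = |X| \cdot \sum_{C_0} \langle [D_{C_0,\alpha(C_0)}], [\phi]\rangle
\end{equation*}
for any choice function $\alpha(C_0) \in X$.

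Finally, I would match the right hand side with $\Phi^\ast_\phi(L)$. Fix a chosen point $p$ on a non-crossing arc. For a given $C_0$, the based condition requires that $r^\ini_p$ and $r^\ter_p$ carry shadow colour $C_0(c_p)$; since $C_0(c_p) \lhd C_0(c_p) = C_0(c_p)$ by the quandle axiom, this is consistent, and by free shadow colourability it specifies exactly one shadow extension, namely $D_{C_0,\,C_0(c_p)}$. Taking $\alpha(C_0) = C_0(c_p)$ in the displayed equation then gives $\Phi_\phi(L) = |X| \cdot \Phi^\ast_\phi(L)$. The only non-routine step is the collapse of the inner sum, whose content is precisely Lemma \ref{LEM:cobordism 2-diagram} applied to the connected quandle $X$; the rest is bookkeeping between the state-sum definition of the invariants and the diagrammatic interpretation of $3$-cycles already set up in \S\ref{SEC:colouring}.
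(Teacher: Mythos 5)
Your proposal is correct and follows essentially the same route as the paper: both enumerate the $|X|$ shadow extensions of each fixed $X$-colouring via Corollary \ref{COR:sphere 2-diagram}, use connectedness of $X$ (via Lemma \ref{LEM:cobordism 2-diagram}, which the paper cites through its immediate consequence, Corollary \ref{COR:diagram class}) to see all extensions pair equally with $\phi$, and observe that exactly one extension is the allowed based state. Your extra bookkeeping identifying $W(C)$ with the pairing $\langle [D_C],[\phi]\rangle$ and the allowed state with $\alpha = C_0(c_p)$ just makes explicit what the paper treats as clear.
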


\begin{proof}
Let \(p\) be a chosen point 
of a diagram \(D\) of \(L\) 
and set \(k = |X|\).
Fix an \(X\)-colouring \(C\) of \(D\).
From Corollary \ref{COR:sphere 2-diagram},
when we give a shadow colour \(\alpha\) 
to the region \(r^\ini_p\), 
we have a whole shadow colouring of \(D\).
Thus there are \(k\) shadow colourings 
\(C_1\), \ldots, \(C_k\) extending \(C\).
Clearly, only one of them, say \(C_1\), is 
an allowed colouring, others are not.
Since \(X\) is connected, 
Corollary \ref{COR:diagram class} says 
that \([D_1] = \cdots = [D_k]\), 
where \(D_i\) denotes a shadow coloured 
diagram \(D\) with \(C \cup C_i\).
Therefore we have an equation 
\begin{quote}
\(\sum_i \langle [D_i], \phi\rangle 
= |X| \cdot \langle [D_1], \phi \rangle\).
\end{quote}
This lemma is a direct conclusion of the equation.
\end{proof}

\section{Topological realisation of \(3\)-cycles.}
\label{CHAP:topological realisation}

\subsection{Surgeries on coloured diagrams.}
\label{SEC:diagram surgery}

When the homology theory 
with integral coefficients concerns, 
every \(3\)-cycle \(c\) in \(Z_3^\rck(Q(K))\) 
can be represented by a \(Q(K)\)-shadow coloured diagram \(D\) 
on some closed surface \(M\). 
We prove, under some conditions, 
that \(D\) can be chosen as a diagram on \(\sphere^2\). \\

\begin{theorem}
\label{TH:topological realisation}
Let \(K\) be a prime knot. 
For any quandle \(3\)-cycle \(c \in Z^\qdl_3(Q(K))\), 
there exists a pair of a link \(L\)
and a homomorphism \(f\): \(Q(L) \to Q(K)\)
such that \([c] = f_\ast[L_\sh]\),
where \([L_\sh]\) is one of the shadow fundamental 
classes of \(L\). 
\end{theorem}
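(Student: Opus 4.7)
The plan is to realise $c$ as a shadow coloured $2$-diagram on a closed surface and then push that surface down to $\sphere^2$ by surgeries. First I would lift $c$ to a rack cycle $\hat c \in Z^\rck_3(Q(K))$ via the Litherland--Nelson splitting (Theorem \ref{TH:chain complex splitting}) and apply Theorem \ref{TH:realization} to obtain a $Q(K)$-shadow coloured $2$-diagram $D$ on some closed oriented surface $M$ with $\langle D \rangle = \hat c$. Once $M$ has been reduced to $\sphere^2$ by surgeries preserving the represented rack homology class, the final diagram on $\sphere^2$ is a link diagram for some link $L$; its Wirtinger-compatible $Q(K)$-colouring gives the required homomorphism $f\colon Q(L) \to Q(K)$, and the shadow colour assigned to the base region exhibits $[c] = \rho_\ast[\hat c]$ as $f_\ast[L_\sh]$ for the corresponding shadow fundamental class of $L$.

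For the surgery step, pick a simple closed curve $\gamma \subset M$ transverse to $D$, disjoint from the crossings and from the basepoint of $M$. The colouring restricts to a $Q(K)$-coloured $1$-diagram $D_\gamma$ on $\gamma \simeq \sphere^1$, and Corollary \ref{COR:boundary 1-diagram} says that $D_\gamma$ bounds a $Q(K)$-coloured $2$-diagram on $\disk^2$ if and only if $\Pi(D_\gamma) = e$ in the associated group $G_{Q(K)} \cong \pi_1(\sphere^3 \setminus K)$. When this triviality holds, I would cut $M$ along $\gamma$, cap both new boundary circles by such disk diagrams, and extend the shadow colouring freely by Lemma \ref{LEM:disk 2-diagram}. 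A cobordism argument modelled on the proof of Lemma \ref{LEM:cobordism 2-diagram} shows that the new diagram $D'$ on $M'$ is rack-homologous to $D$, and if $\gamma$ was non-separating then $\chi(M') > \chi(M)$.

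The main obstacle is producing such a $\gamma$ whenever $M \ne \sphere^2$, and this is where primeness of $K$ must enter. My plan is to \emph{stabilise} first: attach to $D$ along a handle of $M$ a thin band carrying two parallel, oppositely signed arcs both coloured by a single meridional element $x \in Q(K)$, a modification that is null modulo rack boundaries but whose new transverse curves carry coloured words of the form $x^{\pm 1} x^{\mp 1} w$, where $w$ is the word the handle already carried. Then I would argue that, for a prime knot $K$, $w$ can be arranged to be trivial in $\pi_1(\sphere^3 \setminus K)$ after a suitable choice of the added meridian: for a prime knot the knot group is rigid enough (trivial centre except for torus knots, and a tractable cyclic centre in that case) that any word running around a handle is conjugate into a product of meridians which can then be cancelled against the added band. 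Iterating the stabilise-then-destabilise procedure strictly increases $\chi(M)$, so after finitely many steps $M$ becomes $\sphere^2$, producing $(L, f)$ with $[c] = f_\ast[L_\sh]$.
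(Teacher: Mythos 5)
Your first reduction (cut along a curve \(\gamma\) with \(\Pi(\gamma) = e\), cap with disk diagrams via Corollary \ref{COR:boundary 1-diagram} and Lemma \ref{LEM:disk 2-diagram}) agrees with the paper, but the crux of the theorem is exactly the case where no essential curve carries a trivial word, and there your argument breaks down. Inserting a band with a cancelling pair of arcs coloured \(x\) changes the word carried by a handle curve from \(w\) to \(x^{\pm 1}x^{\mp 1}w\), which is the \emph{same} element of \(G_{Q(K)} \cong \pi(K)\); no choice of the added meridian can make it trivial, so the stabilise-then-destabilise step never creates a curve you are allowed to cut along. The phrase ``conjugate into a product of meridians which can then be cancelled'' carries no content: every element of a knot group is a product of meridians, and cancellation against a single added pair would require \(w = e\) to begin with. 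Moreover, the centre of the knot group (trivial except for torus knots) is not the relevant rigidity statement and does not control \(w\).

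What you are missing is the constraint supplied by the shadow colouring itself: by Lemma \ref{LEM:shadow 1-diagram}, the word \(\Pi(C)\) around any closed curve commutes with the shadow colour \(a\) of the base region, viewed in \(\pi(K)\). Primeness enters precisely here: for a prime knot, an element commuting with a meridian is peripheral, so \(\Pi(C) = \lambda^i a^j\) where \(\lambda\) is the longitude. The paper then normalises the word presentations using Lemma \ref{LEM:cobordism 1-diagram}, cuts \(M\) into a \(2n\)-gon, and eliminates the \(\lambda^{\pm 1}\)-segments by gluing in checker-board pieces representing degeneracy \(3\)-chains --- a move that does \emph{not} preserve the rack class but does preserve the quandle class, which is all the theorem asserts --- before cancelling the remaining \(a\)-coloured boundary points and capping off to reach \(\sphere^2\). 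Your proposal has no mechanism for disposing of curves with \(\Pi(C) = \lambda^i a^j \neq e\), and your requirement that every surgery preserve the rack homology class is both unachievable in this step and stronger than needed.
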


\begin{proof}
As already seen in \S \ref{SEC:homomorphism diagram},
there exists a \(Q(K)\)-shadow 
coloured \(2\)-diagram \(D\) 
on a closed surface \(M\) such that 
\(D\) represents \([c]\).
Denote by \(\ast\) the basepoint of \(M\)
and denote by \(a\) the shadow colour of the base-region.
Throughout the proof, closed curves on \(M\) are 
supposed to be based, 
that is, they start from \(\ast\). 
A closed curve \(C\) on \(M\) is called to be
in generic position when \(C\) does not pass any crossings of \(D\)
and it crosses over \(D\) transversely 
where it intersects with \(D\). 
Any closed curve \(C\) on \(M\) can be transformed homotopically 
into a new curve \(C'\) in generic position, 
thus we will omit the notation about genericity. 
Also we notice that the intersection of a curve \(C\) 
with the diagram \(D\) makes \((C, D \cap C)\) 
a \(Q(K)\)-shadow coloured \(1\)-diagram. 
We denote this diagram also by \(C\). 

Suppose that there exists an essential curve \(C\) on \(M\)
such that \(\Pi(C) = e \in \pi(K)\).
By Corollary \ref{COR:boundary 1-diagram} 
and Lemma \ref{LEM:disk 2-diagram}, 
there exists a \(Q(K)\)-shadow coloured \(2\)-diagram 
\(D_1\) on \(\disk^2\) such that \(\partial D_1 = C\). 
Therefore, by cutting \(M\) along \(C\), 
and by attaching \(D_1\) to \(C\) 
and \(- D_1\) to \(- C\), 
as depicted in Figure \ref{FIG:capping off}, 
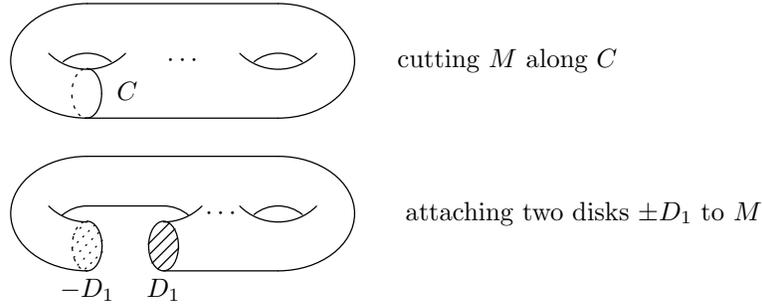
\begin{figure}[ht]
\begin{center}
\unitlength 0.1in%
\begin{picture}(39.5000, 17.0000)(  4.0000,-20.0000)%
%
%
\special{pn 8}%
\special{ar 800 700 400 300  1.5707963 4.7123890}%
\special{pn 8}%
\special{pa 1800 400}%
\special{pa 800 400}%
\special{fp}%
\special{pn 8}%
\special{ar 1800 700 400 300  4.7123890 6.2831853}%
\special{ar 1800 700 400 300  0.0000000 1.5707963}%
\special{pn 8}%
\special{pa 1800 1000}%
\special{pa 800 1000}%
\special{fp}%
\special{pn 8}%
\special{ar 800 460 284 284  0.7853982 2.3561945}%
\special{pn 8}%
\special{ar 800 860 200 200  4.0300724 5.3947055}%
\put(13.0000,-7.0000){\makebox(0,0){\(\cdots\)}}%
\special{pn 8}%
\special{ar 1800 460 284 284  0.7853982 2.3561945}%
\special{pn 8}%
\special{ar 1800 860 200 200  4.0300724 5.3947055}%
\special{pn 8}%
\special{ar 798 870 78 130  4.7123890 6.2831853}%
\special{ar 798 870 78 130  0.0000000 1.5707963}%
\special{pn 8}%
\special{ar 798 870 78 130  1.5707963 1.6861809}%
\special{ar 798 870 78 130  2.0323348 2.1477194}%
\special{ar 798 870 78 130  2.4938732 2.6092579}%
\special{ar 798 870 78 130  2.9554117 3.0707963}%
\special{ar 798 870 78 130  3.4169502 3.5323348}%
\special{ar 798 870 78 130  3.8784886 3.9938732}%
\special{ar 798 870 78 130  4.3400271 4.4554117}%
\put(10.1000,-8.6000){\makebox(0,0){\(C\)}}%
\put(30.0000,-7.0000){\makebox(0,0){cutting \(M\) along \(C\)}}%
%
%
\special{pn 8}%
\special{ar 800 1500 400 300  1.5707963 4.7123890}%
\special{pn 8}%
\special{pa 800 1200}%
\special{pa 1800 1200}%
\special{fp}%
\special{pn 8}%
\special{ar 1800 1500 400 300  4.7123890 6.2831853}%
\special{ar 1800 1500 400 300  0.0000000 1.5707963}%
\special{pn 8}%
\special{pa 1800 1800}%
\special{pa 1200 1800}%
\special{fp}%
\special{pn 8}%
\special{ar 1200 1670 78 130  0.0000000 6.2831853}%
\special{pn 8}%
\special{ar 1200 1260 284 284  0.7853982 1.5707963}%
\special{pn 8}%
\special{ar 1200 1660 200 200  4.7123890 5.4264797}%
\special{pn 8}%
\special{pa 1200 1460}%
\special{pa 800 1460}%
\special{fp}%
\special{pn 8}%
\special{ar 800 1660 200 200  3.9982983 4.7123890}%
\special{pn 8}%
\special{ar 800 1260 284 284  1.5707963 2.3561945}%
\special{pn 8}%
\special{ar 800 1670 78 130  4.7123890 6.2831853}%
\special{ar 800 1670 78 130  0.0000000 1.5707963}%
\special{pn 8}%
\special{ar 800 1670 78 130  1.5707963 1.6861809}%
\special{ar 800 1670 78 130  2.0323348 2.1477194}%
\special{ar 800 1670 78 130  2.4938732 2.6092579}%
\special{ar 800 1670 78 130  2.9554117 3.0707963}%
\special{ar 800 1670 78 130  3.4169502 3.5323348}%
\special{ar 800 1670 78 130  3.8784886 3.9938732}%
\special{ar 800 1670 78 130  4.3400271 4.4554117}%
\put(15.0000,-15.0000){\makebox(0,0){\(\cdots\)}}%
\special{pn 8}%
\special{ar 1800 1260 284 284  0.7853982 2.3561945}%
\special{pn 8}%
\special{ar 1800 1660 200 200  4.0300724 5.3947055}%
\special{pn 8}%
\special{pa 1270 1630}%
\special{pa 1140 1760}%
\special{fp}%
\special{pa 1280 1680}%
\special{pa 1170 1790}%
\special{fp}%
\special{pa 1260 1580}%
\special{pa 1130 1710}%
\special{fp}%
\special{pa 1230 1550}%
\special{pa 1120 1660}%
\special{fp}%
\special{pn 8}%
\special{pa 880 1640}%
\special{pa 750 1770}%
\special{dt 0.045}%
\special{pa 880 1700}%
\special{pa 790 1790}%
\special{dt 0.045}%
\special{pa 860 1600}%
\special{pa 730 1730}%
\special{dt 0.045}%
\special{pa 840 1560}%
\special{pa 720 1680}%
\special{dt 0.045}%
\special{pa 800 1540}%
\special{pa 730 1610}%
\special{dt 0.045}%
\put(12.0000,-19.0000){\makebox(0,0){\(D_1\)}}%
\put(8.0000,-19.0000){\makebox(0,0){\(- D_1\)}}%
\put(34.0000,-15.0000){\makebox(0,0){%
attaching two disks \(\pm D_1\) to \(M\)}}%
%
%
\end{picture}%
\end{center}
\caption{Decreasing the genus of \(M\).}
\label{FIG:capping off}
\end{figure}
we have a new diagram \(D'\) on 
a new surface \(M'\) with genus less than that of \(M\).
It is clear that \(D'\) represents \([D] + [D_1] - [D_1] = [c]\).

After repeating the above argument,
we can suppose that, for any essential curve \(C\) on \(M\),
\(\Pi(C)\) is not trivial. 
Since \(\Pi(C)\) has \(Q(K)\)-shadow colouring, 
Lemma \ref{LEM:shadow 1-diagram} says that \(\Pi(C)\) commutes with \(a\). 
We notice that \(\Pi(C)\) and \(a\) are elements of \(\pi(K)\), 
thus they are represented by some loops 
in the exterior \(E(K)\) of \(K\).
We denote the loops by the same symbols \(\Pi(C)\) and \(a\), 
respectively.
Choose a homotopy \(H\): \(\Pi(C) \cdot a \simeq a \cdot \Pi(C)\).
Clearly, \(H\) is an image of a torus in \(E(K)\), 
via some continuous map. 
Let \(\lambda\) be a longitude of \(K\) such 
that \(a\) and \(\lambda\) gives the peripheral system of \(K\).
By the fact that \(a\) bounds some meridian disk 
and the assumption that \(K\) is prime, 
we concludes that \(\Pi(C)\) is written in the form of
\(\lambda^i a^j\). \\

We can change the word presentation 
of \(\Pi(C)\) with preserving the homology class 
which \(D\) represents. 
By Lemma \ref{LEM:cobordism 1-diagram}, 
for two word presentations of \(\Pi(C)\), 
we have a shadow coloured diagram \(D_2\) on \(\sphere^1 \times I\). 
As drawn in Figure \ref{FIG:uniform presentation}, 
by cutting \(M\) along \(C\) and attaching 
two diagrams \(D_2\) and \(- D_2\), 
we obtain a new curve \(C'\) on a diagram 
where \(\Pi(C') = \Pi(C)\) is presented 
by another word. 
\begin{figure}[ht]
\begin{center}
\unitlength 0.1in%
\begin{picture}( 16.0000, 13.8500)(  7.0000,-18.0000)%
%
%
\special{pn 8}%
\special{pa 700 600}%
\special{pa 1100 600}%
\special{fp}%
\special{pn 8}%
\special{pa 700 1000}%
\special{pa 1100 1000}%
\special{fp}%
\special{pn 8}%
\special{ar 1100 800 100 200  4.7123890 6.2831853}%
\special{ar 1100 800 100 200  0.0000000 1.5707963}%
\special{ar 1100 800 100 200  1.5707963 1.6507963}%
\special{ar 1100 800 100 200  1.8907963 1.9707963}%
\special{ar 1100 800 100 200  2.2107963 2.2907963}%
\special{ar 1100 800 100 200  2.5307963 2.6107963}%
\special{ar 1100 800 100 200  2.8507963 2.9307963}%
\special{ar 1100 800 100 200  3.1707963 3.2507963}%
\special{ar 1100 800 100 200  3.4907963 3.5707963}%
\special{ar 1100 800 100 200  3.8107963 3.8907963}%
\special{ar 1100 800 100 200  4.1307963 4.2107963}%
\special{ar 1100 800 100 200  4.4507963 4.5307963}%
\put(11.0000,-5.0000){\makebox(0,0){\(- C\)}}%
\special{pn 8}%
\special{pa 2300 600}%
\special{pa 1900 600}%
\special{fp}%
\special{pn 8}%
\special{pa 2300 1000}%
\special{pa 1900 1000}%
\special{fp}%
\special{pn 8}%
\special{ar 1900 800 100 200  0.0000000 6.2831853}%
\put(19.0000,-5.0000){\makebox(0,0){\(C\)}}%
%
%
\special{pn 8}%
\special{pa 700 1400}%
\special{pa 1300 1400}%
\special{fp}%
\special{pn 8}%
\special{pa 700 1800}%
\special{pa 1300 1800}%
\special{fp}%
\special{pn 8}%
\special{ar 1100 1600 100 200  4.7123890 6.2831853}%
\special{ar 1100 1600 100 200  0.0000000 1.5707963}%
\special{ar 1100 1600 100 200  1.5707963 1.6507963}%
\special{ar 1100 1600 100 200  1.8907963 1.9707963}%
\special{ar 1100 1600 100 200  2.2107963 2.2907963}%
\special{ar 1100 1600 100 200  2.5307963 2.6107963}%
\special{ar 1100 1600 100 200  2.8507963 2.9307963}%
\special{ar 1100 1600 100 200  3.1707963 3.2507963}%
\special{ar 1100 1600 100 200  3.4907963 3.5707963}%
\special{ar 1100 1600 100 200  3.8107963 3.8907963}%
\special{ar 1100 1600 100 200  4.1307963 4.2107963}%
\special{ar 1100 1600 100 200  4.4507963 4.5307963}%
\special{pn 8}%
\special{pa 1380 1500}%
\special{pa 1190 1690}%
\special{fp}%
\special{pa 1390 1550}%
\special{pa 1140 1800}%
\special{fp}%
\special{pa 1400 1600}%
\special{pa 1200 1800}%
\special{fp}%
\special{pa 1390 1670}%
\special{pa 1260 1800}%
\special{fp}%
\special{pa 1360 1460}%
\special{pa 1200 1620}%
\special{fp}%
\special{pa 1340 1420}%
\special{pa 1200 1560}%
\special{fp}%
\special{pa 1300 1400}%
\special{pa 1190 1510}%
\special{fp}%
\special{pa 1240 1400}%
\special{pa 1170 1470}%
\special{fp}%
\special{pa 1180 1400}%
\special{pa 1150 1430}%
\special{fp}%
\special{pn 8}%
\special{ar 1300 1600 100 200  4.7123890 6.2831853}%
\special{ar 1300 1600 100 200  0.0000000 1.5707963}%
\put(12.0000,-13.0000){\makebox(0,0){\(- D_2\)}}%
\special{pn 8}%
\special{pa 2300 1400}%
\special{pa 1700 1400}%
\special{fp}%
\special{pn 8}%
\special{pa 2300 1800}%
\special{pa 1700 1800}%
\special{fp}%
\special{pn 8}%
\special{ar 1900 1600 100 200  4.7123890 6.2831853}%
\special{ar 1900 1600 100 200  0.0000000 1.5707963}%
\special{pn 8}%
\special{pa 1990 1550}%
\special{pa 1740 1800}%
\special{fp}%
\special{pa 2000 1600}%
\special{pa 1800 1800}%
\special{fp}%
\special{pa 1990 1670}%
\special{pa 1860 1800}%
\special{fp}%
\special{pa 1980 1500}%
\special{pa 1790 1690}%
\special{fp}%
\special{pa 1960 1460}%
\special{pa 1800 1620}%
\special{fp}%
\special{pa 1940 1420}%
\special{pa 1800 1560}%
\special{fp}%
\special{pa 1900 1400}%
\special{pa 1790 1510}%
\special{fp}%
\special{pa 1840 1400}%
\special{pa 1770 1470}%
\special{fp}%
\special{pa 1780 1400}%
\special{pa 1750 1430}%
\special{fp}%
\special{pn 8}%
\special{ar 1700 1600 100 200  0.0000000 6.2831853}%
\put(18.0000,-13.0000){\makebox(0,0){\(D_2\)}}%
%
%
\end{picture}%
\end{center}
\caption{Deforming word presentations.}
\label{FIG:uniform presentation}
\end{figure}
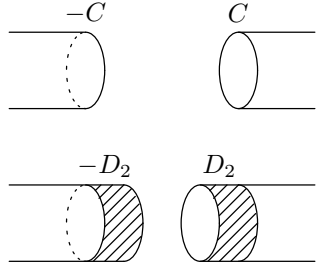
Thus we can suppose that the word presentation 
of \(\lambda\) is uniform. \\

Cut \(M\) into a \(2n\)-gon \(N\)
along \(n\) essential curves \(C_1\), \ldots, \(C_n\).
Obviously, \(\Pi(\partial N)\) is the product of 
\(\Pi(C_1)\), \ldots, \(\Pi(C_n)\) and 
\(\Pi(C_1)^{- 1}\), \ldots, \(\Pi(C_n)^{- 1}\) in some order. 
\begin{figure}[ht]
\begin{center}
\unitlength 0.1in%
\begin{picture}( 44.0000, 14.5000)( 1.8000,-16.0000)%
%
%
\special{pn 8}%
\special{pa 2200 1400}%
\special{pa 2200 1000}%
\special{dt 0.045}%
\special{pn 8}%
\special{pa 2200 1000}%
\special{pa 2000 600}%
\special{fp}%
\special{pn 4}%
\special{sh 1}%
\special{pa 2000 600}%
\special{pa 2012 670}%
\special{pa 2024 648}%
\special{pa 2048 652}%
\special{pa 2000 600}%
\special{fp}%
\special{pn 8}%
\special{pa 2000 600}%
\special{pa 1600 400}%
\special{fp}%
\special{pn 4}%
\special{sh 1}%
\special{pa 1600 400}%
\special{pa 1652 448}%
\special{pa 1648 424}%
\special{pa 1670 412}%
\special{pa 1600 400}%
\special{fp}%
\special{pn 8}%
\special{pa 1600 400}%
\special{pa 1200 400}%
\special{fp}%
\special{pn 4}%
\special{sh 1}%
\special{pa 1200 400}%
\special{pa 1268 420}%
\special{pa 1254 400}%
\special{pa 1268 380}%
\special{pa 1200 400}%
\special{fp}%
\special{pn 8}%
\special{pa 1200 400}%
\special{pa 800 600}%
\special{fp}%
\special{pn 4}%
\special{sh 1}%
\special{pa 800 600}%
\special{pa 870 588}%
\special{pa 848 576}%
\special{pa 852 552}%
\special{pa 800 600}%
\special{fp}%
\special{pn 8}%
\special{pa 800 600}%
\special{pa 600 1000}%
\special{fp}%
\special{pn 4}%
\special{sh 1}%
\special{pa 600 1000}%
\special{pa 648 950}%
\special{pa 624 952}%
\special{pa 612 932}%
\special{pa 600 1000}%
\special{fp}%
\special{pn 8}%
\special{pa 600 1000}%
\special{pa 600 1400}%
\special{dt 0.045}%
\put(15.0000,-3.0000){\makebox(0,0){\(\Pi(C_i)^{\pm 1}\)}}%
\put(7.9000,-4.4000){\makebox(0,0){\(\Pi(C_j)^{\pm 1}\)}}%
\put(4.6000,-7.4000){\makebox(0,0){\(\Pi(C_k)^{\pm 1}\)}}%
\put(14.0000,-11.0000){\makebox(0,0){\(N\)}}%
%
%
\special{pn 8}%
\special{pa 3800 600}%
\special{pa 2600 600}%
\special{fp}%
\special{pn 4}%
\special{sh 1}%
\special{pa 3800 600}%
\special{pa 3730 630}%
\special{pa 3740 600}%
\special{pa 3730 570}%
\special{pa 3800 600}%
\special{fp}%
\put(28.0000,-4.0000){\makebox(0,0){\fbox{\(a\)}}}%
\special{pn 8}%
\special{pa 3000 200}%
\special{pa 3000 600}%
\special{fp}%
\put(32.0000,-4.0000){\makebox(0,0){\(\cdots\)}}%
\special{pn 8}%
\special{pa 3400 200}%
\special{pa 3400 600}%
\special{fp}%
\put(36.0000,-4.0000){\makebox(0,0){\fbox{\(a\)}}}%
\put(32.0000,-7.5000){\makebox(0,0){%
\(\underbrace{\hskip40pt}_{\Pi(C) = \lambda^{\pm 1}}\)}}%
\put(42.0000,-4.0000){\makebox(0,0){\(\lambda^{\pm 1}\)-segment}}%
%
%
\special{pn 8}%
\special{pa 3800 1400}%
\special{pa 2600 1400}%
\special{fp}%
\special{pn 4}%
\special{sh 1}%
\special{pa 3800 1400}%
\special{pa 3730 1430}%
\special{pa 3740 1400}%
\special{pa 3730 1370}%
\special{pa 3800 1400}%
\special{fp}%
\put(27.5000,-12.0000){\makebox(0,0){\fbox{\(a\)}}}%
\special{pn 8}%
\special{pa 2900 1400}%
\special{pa 2900 1000}%
\special{fp}%
\put(30.5000,-12.0000){\makebox(0,0){\(\cdots\)}}%
\special{pn 8}%
\special{pa 3200 1000}%
\special{pa 3200 1400}%
\special{fp}%
\put(33.5000,-12.0000){\makebox(0,0){\fbox{\(a\)}}}%
\special{pn 8}%
\special{pa 3500 1000}%
\special{pa 3500 1400}%
\special{fp}%
\put(36.5000,-12.0000){\makebox(0,0){\fbox{\(a\)}}}%
\put(29.7000,-15.3000){\makebox(0,0){\(a^{\pm 1}\)}}%
\put(32.7000,-15.3000){\makebox(0,0){\(\cdots\)}}%
\put(35.7000,-15.3000){\makebox(0,0){\(a^{\pm 1}\)}}%
\put(42.0500,-12.0000){\makebox(0,0){\(a\)-segment}}%
%
%
\end{picture}%
\end{center}
\caption{Boundary of \(2n\)-gon \(N\).}
\label{FIG:expansion}
\end{figure}
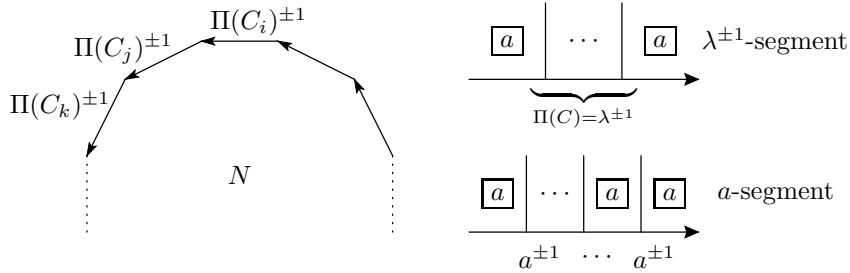
On the boundary \(\partial N\),
there exist the same numbers of \(\lambda\)- and \(\lambda^{- 1}\)-segments, 
and some \(a\)-segments as drawn in Figure \ref{FIG:expansion}. 
We will first remove the \(\lambda^{\pm 1}\)-segments from \(\partial N\).
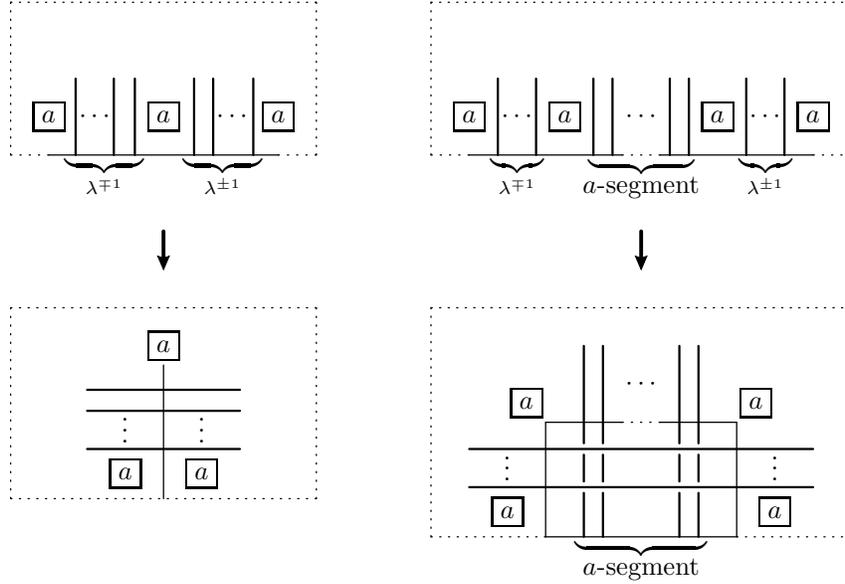
\begin{figure}[ht]
\begin{center}
\unitlength 0.1in%
\begin{picture}( 44.1000, 31.1000)( 5.9000,-34.4000)%
%
%
\special{pn 8}%
\special{pa 600 400}%
\special{pa 600 1200}%
\special{dt 0.045}%
\special{pn 8}%
\special{pa 600 1200}%
\special{pa 800 1200}%
\special{dt 0.045}%
\special{pn 8}%
\special{pa 800 1200}%
\special{pa 2000 1200}%
\special{fp}%
\special{pn 8}%
\special{pa 2000 1200}%
\special{pa 2200 1200}%
\special{dt 0.045}%
\special{pn 8}%
\special{pa 2200 1200}%
\special{pa 2200 400}%
\special{dt 0.045}%
\special{pn 8}%
\special{pa 2200 400}%
\special{pa 600 400}%
\special{dt 0.045}%
\put(8.0000,-10.0000){\makebox(0,0){\fbox{\(a\)}}}%
\special{pn 13}%
\special{pa 940 800}%
\special{pa 940 1200}%
\special{fp}%
\put(10.4000,-10.0000){\makebox(0,0){\(\cdots\)}}%
\special{pn 13}%
\special{pa 1140 800}%
\special{pa 1140 1200}%
\special{fp}%
\special{pn 13}%
\special{pa 1250 800}%
\special{pa 1250 1200}%
\special{fp}%
\put(14.0000,-10.0000){\makebox(0,0){\fbox{\(a\)}}}%
\special{pn 13}%
\special{pa 1560 800}%
\special{pa 1560 1200}%
\special{fp}%
\special{pn 13}%
\special{pa 1660 800}%
\special{pa 1660 1200}%
\special{fp}%
\put(17.6000,-10.0000){\makebox(0,0){\(\cdots\)}}%
\special{pn 13}%
\special{pa 1870 800}%
\special{pa 1870 1200}%
\special{fp}%
\put(20.0000,-10.0000){\makebox(0,0){\fbox{\(a\)}}}%
\put(10.9000,-13.1000){\makebox(0,0){%
\(\underbrace{\hskip30pt}_{\lambda^{\mp 1}}\)}}%
\put(17.1000,-13.1000){\makebox(0,0){%
\(\underbrace{\hskip30pt}_{\lambda^{\pm 1}}\)}}%
\special{pn 20}%
\special{pa 1400 1600}%
\special{pa 1400 1750}%
\special{fp}%
\special{pn 4}%
\special{sh 1}%
\special{pa 1400 1800}%
\special{pa 1430 1730}%
\special{pa 1400 1740}%
\special{pa 1370 1730}%
\special{pa 1400 1800}%
\special{fp}%
%
%
\special{pn 8}%
\special{pa 600 2000}%
\special{pa 600 3000}%
\special{dt 0.045}%
\special{pn 8}%
\special{pa 600 3000}%
\special{pa 2200 3000}%
\special{dt 0.045}%
\special{pn 8}%
\special{pa 2200 3000}%
\special{pa 2200 2000}%
\special{dt 0.045}%
\special{pn 8}%
\special{pa 2200 2000}%
\special{pa 600 2000}%
\special{dt 0.045}%
\special{pn 8}%
\special{pa 1400 2300}%
\special{pa 1400 3000}%
\special{fp}%
\put(14.0000,-22.0000){\makebox(0,0){\fbox{\(a\)}}}%
\special{pn 13}%
\special{pa 1000 2430}%
\special{pa 1800 2430}%
\special{fp}%
\special{pn 13}%
\special{pa 1000 2540}%
\special{pa 1800 2540}%
\special{fp}%
\put(12.0000,-26.0000){\makebox(0,0){\(\vdots\)}}%
\put(16.0000,-26.0000){\makebox(0,0){\(\vdots\)}}%
\special{pn 13}%
\special{pa 1000 2740}%
\special{pa 1800 2740}%
\special{fp}%
\put(12.0000,-28.7000){\makebox(0,0){\fbox{\(a\)}}}%
\put(16.0000,-28.7000){\makebox(0,0){\fbox{\(a\)}}}%
%
%
\special{pn 8}%
\special{pa 2800 400}%
\special{pa 2800 1200}%
\special{dt 0.045}%
\special{pn 8}%
\special{pa 2800 1200}%
\special{pa 3000 1200}%
\special{dt 0.045}%
\special{pn 8}%
\special{pa 3000 1200}%
\special{pa 3800 1200}%
\special{fp}%
\special{pn 8}%
\special{pa 3800 1200}%
\special{pa 4000 1200}%
\special{dt 0.045}%
\special{pn 8}%
\special{pa 4000 1200}%
\special{pa 4800 1200}%
\special{fp}%
\special{pn 8}%
\special{pa 4800 1200}%
\special{pa 5000 1200}%
\special{dt 0.045}%
\special{pn 8}%
\special{pa 5000 1200}%
\special{pa 5000 400}%
\special{dt 0.045}%
\special{pn 8}%
\special{pa 5000 400}%
\special{pa 2800 400}%
\special{dt 0.045}%
\put(30.0000,-10.0000){\makebox(0,0){\fbox{\(a\)}}}%
\special{pn 13}%
\special{pa 3150 800}%
\special{pa 3150 1200}%
\special{fp}%
\put(32.5000,-10.0000){\makebox(0,0){\(\cdots\)}}%
\special{pn 13}%
\special{pa 3350 800}%
\special{pa 3350 1200}%
\special{fp}%
\put(35.0000,-10.0000){\makebox(0,0){\fbox{\(a\)}}}%
\special{pn 13}%
\special{pa 3650 800}%
\special{pa 3650 1200}%
\special{fp}%
\special{pn 13}%
\special{pa 3750 800}%
\special{pa 3750 1200}%
\special{fp}%
\put(39.0000,-10.0000){\makebox(0,0){\(\cdots\)}}%
\special{pn 13}%
\special{pa 4050 800}%
\special{pa 4050 1200}%
\special{fp}%
\special{pn 13}%
\special{pa 4150 800}%
\special{pa 4150 1200}%
\special{fp}%
\put(43.0000,-10.0000){\makebox(0,0){\fbox{\(a\)}}}%
\special{pn 13}%
\special{pa 4450 800}%
\special{pa 4450 1200}%
\special{fp}%
\put(45.5000,-10.0000){\makebox(0,0){\(\cdots\)}}%
\special{pn 13}%
\special{pa 4650 800}%
\special{pa 4650 1200}%
\special{fp}%
\put(48.0000,-10.0000){\makebox(0,0){\fbox{\(a\)}}}%
\put(32.5000,-13.1000){\makebox(0,0){%
\(\underbrace{\hskip20pt}_{\lambda^{\mp 1}}\)}}%
\put(45.5000,-13.1000){\makebox(0,0){%
\(\underbrace{\hskip20pt}_{\lambda^{\pm 1}}\)}}%
\put(39.0000,-12.6000){\makebox(0,0){\(\underbrace{\hskip40pt}\)}}%
\put(39.0000,-13.7000){\makebox(0,0){\(a\)-segment}}%
\special{pn 20}%
\special{pa 3900 1600}%
\special{pa 3900 1750}%
\special{fp}%
\special{pn 4}%
\special{sh 1}%
\special{pa 3900 1800}%
\special{pa 3930 1730}%
\special{pa 3900 1740}%
\special{pa 3870 1730}%
\special{pa 3900 1800}%
\special{fp}%
%
%
\special{pn 8}%
\special{pa 2800 2000}%
\special{pa 2800 3200}%
\special{dt 0.045}%
\special{pn 8}%
\special{pa 2800 3200}%
\special{pa 3400 3200}%
\special{dt 0.045}%
\special{pn 8}%
\special{pa 3400 3200}%
\special{pa 4400 3200}%
\special{fp}%
\special{pn 8}%
\special{pa 4400 3200}%
\special{pa 5000 3200}%
\special{dt 0.045}%
\special{pn 8}%
\special{pa 5000 3200}%
\special{pa 5000 2000}%
\special{dt 0.045}%
\special{pn 8}%
\special{pa 5000 2000}%
\special{pa 2800 2000}%
\special{dt 0.045}%
\put(33.0000,-25.0000){\makebox(0,0){\fbox{\(a\)}}}%
\put(45.0000,-25.0000){\makebox(0,0){\fbox{\(a\)}}}%
\special{pn 8}%
\special{pa 3400 3200}%
\special{pa 3400 2600}%
\special{fp}%
\special{pn 8}%
\special{pa 3400 2600}%
\special{pa 3800 2600}%
\special{fp}%
\special{pn 8}%
\special{pa 3800 2600}%
\special{pa 4000 2600}%
\special{dt 0.045}%
\special{pn 8}%
\special{pa 4000 2600}%
\special{pa 4400 2600}%
\special{fp}%
\special{pn 8}%
\special{pa 4400 2600}%
\special{pa 4400 3200}%
\special{fp}%
\special{pn 13}%
\special{pa 3000 2740}%
\special{pa 4800 2740}%
\special{fp}%
\put(32.0000,-28.0000){\makebox(0,0){\(\vdots\)}}%
\put(46.0000,-28.0000){\makebox(0,0){\(\vdots\)}}%
\special{pn 13}%
\special{pa 3000 2940}%
\special{pa 4800 2940}%
\special{fp}%
\put(32.0000,-30.7000){\makebox(0,0){\fbox{\(a\)}}}%
\put(46.0000,-30.7000){\makebox(0,0){\fbox{\(a\)}}}%
\special{pn 13}%
\special{pa 3600 2200}%
\special{pa 3600 2710}%
\special{fp}%
\special{pn 13}%
\special{pa 3600 2770}%
\special{pa 3600 2910}%
\special{fp}%
\special{pn 13}%
\special{pa 3600 2970}%
\special{pa 3600 3200}%
\special{fp}%
\special{pn 13}%
\special{pa 3700 2200}%
\special{pa 3700 2710}%
\special{fp}%
\special{pn 13}%
\special{pa 3700 2770}%
\special{pa 3700 2910}%
\special{fp}%
\special{pn 13}%
\special{pa 3700 2970}%
\special{pa 3700 3200}%
\special{fp}%
\put(39.0000,-24.0000){\makebox(0,0){\(\cdots\)}}%
\special{pn 13}%
\special{pa 4100 2200}%
\special{pa 4100 2710}%
\special{fp}%
\special{pn 13}%
\special{pa 4100 2770}%
\special{pa 4100 2910}%
\special{fp}%
\special{pn 13}%
\special{pa 4100 2970}%
\special{pa 4100 3200}%
\special{fp}%
\special{pn 13}%
\special{pa 4200 2200}%
\special{pa 4200 2710}%
\special{fp}%
\special{pn 13}%
\special{pa 4200 2770}%
\special{pa 4200 2910}%
\special{fp}%
\special{pn 13}%
\special{pa 4200 2970}%
\special{pa 4200 3200}%
\special{fp}%
\put(39.0000,-32.6000){\makebox(0,0){\(\underbrace{\hskip50pt}\)}}%
\put(39.0000,-33.7000){\makebox(0,0){\(a\)-segment}}%
%
%
\end{picture}%
\end{center}
\caption{Vanishing \(\lambda^{\pm 1}\)-segments.}
\label{FIG:lambda surgery}
\end{figure}
When \(\lambda\)- and \(\lambda^{- 1}\)-segments 
are adjacent, we easily connect them together.
This surgery is shown in the left two of Figure \ref{FIG:lambda surgery}.
If an \(a\)-segment exists between \(\lambda^{\pm 1}\)-segments, 
we attach a checker-board diagram there, 
which is depicted in the right two of the same figure. 
The latter surgery does not preserve the rack homology classes. 
However, since the checker-board diagram represents 
a degeneracy \(3\)-chain, 
the quandle homology classes of the diagrams are invariant 
under this surgery. 

Therefore, by repeating those surgeries, 
we have a new polygon \(N'\) such that 
there are only \(a\)-segments on \(\partial N'\). 
Since \(\Pi(\partial N') = \Pi(\partial N) = e\) holds 
and since all the endpoints of arcs are coloured with \(a\), 
there are the same numbers of points with signature \(+ 1\) 
and with \(- 1\). 
Connecting adjacent points with opposite signatures 
by an arc make the diagram \(N'\) simpler. 
After finitely many times of surgeries as such, 
a diagram on \(\disk^2\) with trivially coloured boundary is obtained. 
Finally, by capping off the diagram, 
we have a diagram \(\tilde{D}\) on \(\sphere^2\) 
such that a \(3\)-cycle \(\langle \tilde{D} \rangle\) 
is quandle homologous to \(c\). 

The diagram \(\tilde{D}\) can be regarded as 
a diagram of a link \(L\). 
Obviously, a \(Q(K)\)-colouring of a link diagram of \(L\) 
gives a homomorphism \(f\): \(Q(L) \to Q(K)\), 
which completes the proof.
\end{proof}

\subsection{Examples of computation.}
\label{SEC:computation}

Since our motivation is 
to determine the third quandle homology group 
of knot quandle \(Q(K)\), 
it is natural to consider whether the quotient 
\(H^\qdl_3(Q(K)) / \Z[K_\sh]\) is trivial or not. 

By Theorem \ref{TH:topological realisation}, 
for a prime knot \(K\), 
it is sufficient to consider the images 
of the shadow fundamental classes of some links 
via homomorphisms between knot quandles. 

Regrettably, we could not found any elements of 
\(H^\qdl_3(Q(K)) / \Z[K_\sh]\). 
Thus, further research is expected. \\

We show two examples of computations. 
Let \(K\) be a knot \(4_1\). 
The homomorphisms which we use are 
due to the tables in [KS]. 
At the first, we compute 
the shadow fundamental class of \(K\). 
\begin{figure}[ht]
\begin{center}
\unitlength 0.1in%
\begin{picture}( 18.0000, 15.0000)( 12.6000,-17.0000)%
\special{pn 8}%
\special{pa 2570 710}%
\special{pa 2570 850}%
\special{fp}%
\special{pn 8}%
\special{pa 2570 850}%
\special{pa 2570 856}%
\special{pa 2570 860}%
\special{pa 2568 866}%
\special{pa 2568 870}%
\special{pa 2566 876}%
\special{pa 2562 880}%
\special{pa 2560 886}%
\special{pa 2556 890}%
\special{pa 2554 896}%
\special{pa 2550 900}%
\special{pa 2544 906}%
\special{pa 2540 910}%
\special{pa 2534 916}%
\special{pa 2530 920}%
\special{pa 2524 926}%
\special{pa 2516 930}%
\special{pa 2510 936}%
\special{pa 2504 940}%
\special{pa 2496 946}%
\special{pa 2488 950}%
\special{pa 2480 956}%
\special{pa 2472 960}%
\special{pa 2462 966}%
\special{pa 2454 970}%
\special{pa 2444 976}%
\special{pa 2434 980}%
\special{pa 2424 986}%
\special{pa 2414 990}%
\special{pa 2404 996}%
\special{pa 2392 1000}%
\special{pa 2382 1006}%
\special{pa 2370 1010}%
\special{pa 2360 1016}%
\special{pa 2348 1020}%
\special{pa 2336 1026}%
\special{pa 2324 1030}%
\special{pa 2312 1036}%
\special{pa 2300 1040}%
\special{pa 2286 1046}%
\special{pa 2274 1050}%
\special{pa 2262 1056}%
\special{pa 2248 1060}%
\special{pa 2236 1066}%
\special{pa 2222 1070}%
\special{pa 2210 1076}%
\special{pa 2196 1080}%
\special{pa 2184 1086}%
\special{pa 2170 1090}%
\special{pa 2158 1096}%
\special{pa 2144 1100}%
\special{pa 2132 1106}%
\special{pa 2118 1110}%
\special{pa 2106 1116}%
\special{pa 2092 1120}%
\special{pa 2080 1126}%
\special{pa 2066 1130}%
\special{pa 2054 1136}%
\special{pa 2042 1140}%
\special{pa 2030 1146}%
\special{pa 2018 1150}%
\special{pa 2006 1156}%
\special{pa 1994 1160}%
\special{pa 1982 1166}%
\special{pa 1970 1170}%
\special{pa 1960 1176}%
\special{pa 1948 1180}%
\special{pa 1938 1186}%
\special{pa 1926 1190}%
\special{pa 1916 1196}%
\special{pa 1906 1200}%
\special{pa 1898 1206}%
\special{pa 1888 1210}%
\special{pa 1878 1216}%
\special{pa 1870 1220}%
\special{pa 1862 1226}%
\special{pa 1854 1230}%
\special{pa 1846 1236}%
\special{pa 1838 1240}%
\special{pa 1830 1246}%
\special{pa 1824 1250}%
\special{pa 1818 1256}%
\special{pa 1812 1260}%
\special{pa 1806 1266}%
\special{pa 1800 1270}%
\special{pa 1796 1276}%
\special{pa 1792 1280}%
\special{pa 1788 1286}%
\special{pa 1784 1290}%
\special{pa 1780 1296}%
\special{pa 1778 1300}%
\special{pa 1776 1306}%
\special{pa 1774 1310}%
\special{pa 1772 1316}%
\special{pa 1772 1320}%
\special{pa 1770 1326}%
\special{pa 1770 1330}%
\special{sp}%
\special{pn 8}%
\special{ar 2570 1330 800 300  2.3007963 3.1415927}%
\special{ar 2570 1330 800 300  1.5707963 1.9415927}%
\special{pn 8}%
\special{ar 2570 1145 450 485  0.0000000 1.5707963}%
\special{ar 2570 1145 450 485  4.7123890 6.2831853}%
\special{pn 8}%
\special{pa 2570 658}%
\special{pa 1820 658}%
\special{fp}%
\special{pn 8}%
\special{ar 1770 1145 450 485  1.5707963 4.5823890}%
\special{pn 8}%
\special{ar 1770 1330 800 300  0.0000000 1.5707963}%
\special{pn 8}%
\special{pa 1770 850}%
\special{pa 1770 856}%
\special{pa 1772 860}%
\special{pa 1772 866}%
\special{pa 1774 870}%
\special{pa 1776 876}%
\special{pa 1778 880}%
\special{pa 1780 886}%
\special{pa 1784 890}%
\special{pa 1788 896}%
\special{pa 1792 900}%
\special{pa 1796 906}%
\special{pa 1800 910}%
\special{pa 1806 916}%
\special{pa 1812 920}%
\special{pa 1818 926}%
\special{pa 1824 930}%
\special{pa 1830 936}%
\special{pa 1838 940}%
\special{pa 1846 946}%
\special{pa 1854 950}%
\special{pa 1862 956}%
\special{pa 1870 960}%
\special{pa 1878 966}%
\special{pa 1888 970}%
\special{pa 1898 976}%
\special{pa 1906 980}%
\special{pa 1916 986}%
\special{pa 1926 990}%
\special{pa 1938 996}%
\special{pa 1948 1000}%
\special{pa 1960 1006}%
\special{pa 1970 1010}%
\special{pa 1982 1016}%
\special{pa 1994 1020}%
\special{pa 2006 1026}%
\special{pa 2018 1030}%
\special{pa 2030 1036}%
\special{pa 2042 1040}%
\special{pa 2054 1046}%
\special{pa 2066 1050}%
\special{pa 2080 1056}%
\special{pa 2092 1060}%
\special{pa 2106 1066}%
\special{sp}%
\special{pa 2236 1116}%
\special{pa 2248 1120}%
\special{pa 2262 1126}%
\special{pa 2274 1130}%
\special{pa 2286 1136}%
\special{pa 2300 1140}%
\special{pa 2312 1146}%
\special{pa 2324 1150}%
\special{pa 2336 1156}%
\special{pa 2348 1160}%
\special{pa 2360 1166}%
\special{pa 2370 1170}%
\special{pa 2382 1176}%
\special{pa 2392 1180}%
\special{pa 2404 1186}%
\special{pa 2414 1190}%
\special{pa 2424 1196}%
\special{pa 2434 1200}%
\special{pa 2444 1206}%
\special{pa 2454 1210}%
\special{pa 2462 1216}%
\special{pa 2472 1220}%
\special{pa 2480 1226}%
\special{pa 2488 1230}%
\special{pa 2496 1236}%
\special{pa 2504 1240}%
\special{pa 2510 1246}%
\special{pa 2516 1250}%
\special{pa 2524 1256}%
\special{pa 2530 1260}%
\special{pa 2534 1266}%
\special{pa 2540 1270}%
\special{pa 2544 1276}%
\special{pa 2550 1280}%
\special{pa 2554 1286}%
\special{pa 2556 1290}%
\special{pa 2560 1296}%
\special{pa 2562 1300}%
\special{pa 2566 1306}%
\special{pa 2568 1310}%
\special{pa 2568 1316}%
\special{pa 2570 1320}%
\special{pa 2570 1326}%
\special{pa 2570 1330}%
\special{sp}%
\special{pn 8}%
\special{pa 1770 850}%
\special{pa 1770 660}%
\special{fp}%
\special{pn 8}%
\special{ar 2170 660 400 360  3.1415927 6.1031853}%
\special{pn 4}%
\special{sh 1}%
\special{pa 2140 300}%
\special{pa 2210 270}%
\special{pa 2200 300}%
\special{pa 2210 330}%
\special{pa 2140 300}%
\special{fp}%
\put(17.1000,-13.4300){\makebox(0,0){\(1\)}}%
\put(25.6000,-3.8700){\makebox(0,0){\(2\)}}%
\put(14.0000,-15.4000){\makebox(0,0){\(3\)}}%
\put(30.1000,-8.0700){\makebox(0,0){\(4\)}}%
\put(16.1000,-10.3000){\makebox(0,0){\fbox{\(1\)}}}%
\put(21.7000,-13.5000){\makebox(0,0){\fbox{\(1\)}}}%
\put(13.5000,-6.8000){\makebox(0,0){\fbox{\(4\)}}}%
\put(27.3000,-10.3000){\makebox(0,0){\fbox{\(4\)}}}%
\put(21.7000,-8.3000){\makebox(0,0){\fbox{\(4 \lhd 1\)}}}%
\put(21.7000,-5.0000){\makebox(0,0){\fbox{\(4 \lhd 2\)}}}%
\end{picture}%
\end{center}
\caption{Shadow coloured diagram of \(4_1\).}
\label{FIG:knot 4-1}
\end{figure}
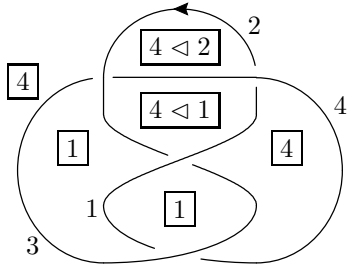
By Figure \ref{FIG:knot 4-1}, we obtain 
\begin{quote}
\([K_\sh] = (4, 1, 4) - (1, 3, 1) 
- (1, 1, 3) + (1, 3, 2) \\
= (4, 1, 4) - (1, 3, 1) + (1, 3, 2)\). 
\end{quote}

\begin{example}
\label{EX:knot 9-37}
Let \(L\) be a knot \(9_{37}\).
We have a surjective homomorphism \(f\) 
from \(Q(L)\) to \(Q(K)\) defined by 
\begin{quote}
\(1 \mapsto 2\), 
\(2 \mapsto 3\), 
\(3 \mapsto 4 \blhd 1\), 
\(4 \mapsto 3\), 
\(5 \mapsto 1\), \\
\(6 \mapsto 4 \lhd 1\), 
\(7 \mapsto 4\), 
\(8 \mapsto 1\), 
\(9 \mapsto 4\).
\end{quote}
The induced shadow colouring of \(L\) is 
shown in Figure \ref{FIG:knot 9-37}. 
We disregard the shadow colours which are 
not used in the calculus below. 
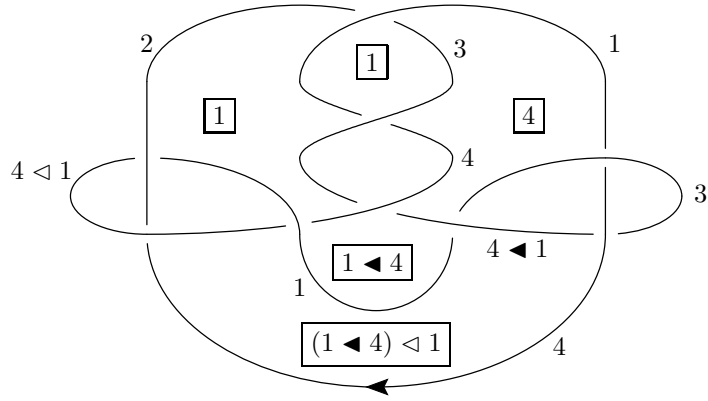
\begin{figure}[ht]
\begin{center}
\unitlength 0.1in%
\begin{picture}( 36.4000, 21.1500)(  4.8000,-22.4500)%
\special{pn 8}%
\special{ar 2800 600 800 400  3.1415927 6.2831853}%
\special{pn 8}%
\special{pa 3600 600}%
\special{pa 3600 950}%
\special{fp}%
\special{pn 8}%
\special{pa 3600 1050}%
\special{pa 3600 1400}%
\special{fp}%
\special{pn 8}%
\special{ar 2400 1400 1200 800  0.0000000 3.0800000}
\special{pn 8}%
\special{pa 1200 1350}%
\special{pa 1200 600}%
\special{fp}%
\special{pn 8}%
\special{ar 2000 600 800 400  3.1415927 5.0800000}%
\special{ar 2000 600 800 400  5.3800000 6.2831853}%
\special{pn 8}%
\special{pa 2800 600}%
\special{pa 2800 606}%
\special{pa 2800 610}%
\special{pa 2798 616}%
\special{pa 2796 620}%
\special{pa 2792 626}%
\special{pa 2790 630}%
\special{pa 2786 636}%
\special{pa 2780 640}%
\special{pa 2776 646}%
\special{pa 2770 650}%
\special{pa 2764 656}%
\special{pa 2756 660}%
\special{pa 2750 666}%
\special{pa 2742 670}%
\special{pa 2734 676}%
\special{pa 2724 680}%
\special{pa 2714 686}%
\special{pa 2704 690}%
\special{pa 2694 696}%
\special{pa 2684 700}%
\special{pa 2672 706}%
\special{pa 2660 710}%
\special{pa 2648 716}%
\special{pa 2636 720}%
\special{pa 2622 726}%
\special{pa 2610 730}%
\special{pa 2596 736}%
\special{pa 2582 740}%
\special{pa 2568 746}%
\special{pa 2554 750}%
\special{pa 2538 756}%
\special{pa 2524 760}%
\special{pa 2510 766}%
\special{pa 2494 770}%
\special{pa 2478 776}%
\special{pa 2464 780}%
\special{pa 2448 786}%
\special{pa 2432 790}%
\special{pa 2416 796}%
\special{pa 2400 800}%
\special{pa 2384 806}%
\special{pa 2370 810}%
\special{pa 2354 816}%
\special{pa 2338 820}%
\special{pa 2322 826}%
\special{pa 2308 830}%
\special{pa 2292 836}%
\special{pa 2276 840}%
\special{pa 2262 846}%
\special{pa 2248 850}%
\special{pa 2234 856}%
\special{pa 2218 860}%
\special{pa 2206 866}%
\special{pa 2192 870}%
\special{pa 2178 876}%
\special{pa 2166 880}%
\special{pa 2152 886}%
\special{pa 2140 890}%
\special{pa 2128 896}%
\special{pa 2118 900}%
\special{pa 2106 906}%
\special{pa 2096 910}%
\special{pa 2086 916}%
\special{pa 2076 920}%
\special{pa 2068 926}%
\special{pa 2060 930}%
\special{pa 2052 936}%
\special{pa 2044 940}%
\special{pa 2038 946}%
\special{pa 2030 950}%
\special{pa 2026 956}%
\special{pa 2020 960}%
\special{pa 2016 966}%
\special{pa 2012 970}%
\special{pa 2008 976}%
\special{pa 2006 980}%
\special{pa 2004 986}%
\special{pa 2002 990}%
\special{pa 2000 996}%
\special{pa 2000 1000}%
\special{sp}%
\special{pn 8}%
\special{ar 3600 1000 1600 400  2.5207963 3.1415927}%
\special{ar 3600 1000 1600 400  1.6107963 2.3207963}%
\special{pn 8}%
\special{ar 3600 1200 400 200  0.0000000 1.4007963}%
\special{ar 3600 1200 400 200  4.7123890 6.2831853}%
\special{pn 8}%
\special{ar 3600 1400 800 400  3.4500000 4.7123890}%
\special{pn 8}%
\special{ar 2400 1400 400 400  0.0500000 3.1415927}%
\special{pn 8}%
\special{ar 1200 1400 800 400  4.8023890 6.2831853}%
\special{pn 8}%
\special{ar 1200 1200 400 200  1.5707963 4.5500000}%
\special{pn 8}%
\special{ar 1200 1000 1600 400  1.1000000 1.5707963}%
\special{ar 1200 1000 1600 400  0.0000000 1.0000000}%
\special{pn 8}%
\special{pa 2800 1000}%
\special{pa 2800 996}%
\special{pa 2800 990}%
\special{pa 2798 986}%
\special{pa 2796 980}%
\special{pa 2792 976}%
\special{pa 2790 970}%
\special{pa 2786 966}%
\special{pa 2780 960}%
\special{pa 2776 956}%
\special{pa 2770 950}%
\special{pa 2764 946}%
\special{pa 2756 940}%
\special{pa 2750 936}%
\special{pa 2742 930}%
\special{pa 2734 926}%
\special{pa 2724 920}%
\special{pa 2714 916}%
\special{pa 2704 910}%
\special{pa 2694 906}%
\special{pa 2684 900}%
\special{pa 2672 896}%
\special{pa 2660 890}%
\special{pa 2648 886}%
\special{pa 2636 880}%
\special{pa 2622 876}%
\special{pa 2610 870}%
\special{pa 2596 866}%
\special{pa 2582 860}%
\special{pa 2568 856}%
\special{pa 2554 850}%
\special{pa 2538 846}%
\special{pa 2524 840}%
\special{pa 2510 836}%
\special{pa 2494 830}%
\special{pa 2478 826}%
\special{sp}%
\special{pa 2322 776}%
\special{pa 2308 770}%
\special{pa 2292 766}%
\special{pa 2276 760}%
\special{pa 2262 756}%
\special{pa 2248 750}%
\special{pa 2234 746}%
\special{pa 2218 740}%
\special{pa 2206 736}%
\special{pa 2192 730}%
\special{pa 2178 726}%
\special{pa 2166 720}%
\special{pa 2152 716}%
\special{pa 2140 710}%
\special{pa 2128 706}%
\special{pa 2118 700}%
\special{pa 2106 696}%
\special{pa 2096 690}%
\special{pa 2086 686}%
\special{pa 2076 680}%
\special{pa 2068 676}%
\special{pa 2060 670}%
\special{pa 2052 666}%
\special{pa 2044 660}%
\special{pa 2038 656}%
\special{pa 2030 650}%
\special{pa 2026 646}%
\special{pa 2020 640}%
\special{pa 2016 636}%
\special{pa 2012 630}%
\special{pa 2008 626}%
\special{pa 2006 620}%
\special{pa 2004 616}%
\special{pa 2002 610}%
\special{pa 2000 606}%
\special{pa 2000 600}%
\special{sp}%
\special{pn 4}%
\special{sh 1}%
\special{pa 2350 2200}%
\special{pa 2460 2240}%
\special{pa 2440 2200}%
\special{pa 2460 2160}%
\special{pa 2350 2200}%
\special{fp}%
\put(36.5000,-4.0000){\makebox(0,0){\(1\)}}%
\put(20.0000,-16.8000){\makebox(0,0){\(1\)}}%
\put(12.0000,-4.0000){\makebox(0,0){\(2\)}}%
\put(28.4000,-4.3000){\makebox(0,0){\(3\)}}%
\put(41.0000,-11.9000){\makebox(0,0){\(3\)}}%
\put(28.8000,-10.0000){\makebox(0,0){\(4\)}}%
\put(33.6000,-19.9000){\makebox(0,0){\(4\)}}%
\put(31.4000,-14.8000){\makebox(0,0){\(4 \blhd 1\)}}%
\put(6.5000,-10.7000){\makebox(0,0){\(4 \lhd 1\)}}%
\put(15.8000,-7.8000){\makebox(0,0){\fbox{\(1\)}}}%
\put(32.0000,-7.8000){\makebox(0,0){\fbox{\(4\)}}}%
\put(23.8000,-5.0000){\makebox(0,0){\fbox{\(1\)}}}%
\put(23.8000,-15.5000){\makebox(0,0){\fbox{\(1 \blhd 4\)}}}%
\put(24.0000,-19.8000){\makebox(0,0){\fbox{\((1 \blhd 4) \lhd 1\)}}}%
\end{picture}%
\end{center}
\caption{Shadow coloured diagram of \(9_{37}\).}
\label{FIG:knot 9-37}
\end{figure}
Now we have the image of \([L_\sh]\) via \(f_\ast\) 
as follows: 
\begin{quote}
\(f_\ast [L_\sh] \\
\hskip15pt = - (1, 3, 1) - (1, 1, 3) 
- (1 \blhd 4, 4 \blhd 1, 4) \\
\hskip26pt + (4, 1, 3) 
+ ((1 \blhd 4) \lhd 1, 4 \blhd 1, 4) 
- (1 \blhd 4, 1, 4 \blhd 1) \\
\hskip26pt + (1, 1, 2) 
+ ((1 \blhd 4) \lhd 1, 4, 4 \lhd 1) 
- (1 \blhd 4, 4, 1) \\
\hskip15pt = - (1, 3, 1) + (4, 1, 4) + (1, 3, 2) \\
\hskip26pt - (4, 1, 4) - (1, 3, 2) - (1 \blhd 4, 4 \blhd 1, 4) \\
\hskip26pt + (4, 1, 3) 
+ ((1 \blhd 4) \lhd 1, 4 \blhd 1, 4) 
- (1 \blhd 4, 1, 4 \blhd 1) \\
\hskip26pt + (1, 1, 2) 
+ ((1 \blhd 4) \lhd 1, 4, 4 \lhd 1) 
- (1 \blhd 4, 4, 1) \\
\hskip15pt = [K_\sh] \\
\hskip26pt - ((1 \blhd 4) \lhd 1, 4, 4) - (1 \blhd 4, 4 \blhd 1, 1) 
- (1 \blhd 4, 1, 4) \\
\hskip26pt + (4, 1, 3) 
+ ((1 \blhd 4) \lhd 1, 4 \blhd 1, 4) 
- (1 \blhd 4, 1, 4 \blhd 1) \\
\hskip26pt + (1, 1, 2) 
+ ((1 \blhd 4) \lhd 1, 4, 4 \lhd 1) 
- (1 \blhd 4, 4, 1) \\
\hskip15pt = [K_\sh] \\
\hskip26pt + \{ - (1 \blhd 4, 4 \blhd 1, 1) 
+ (4, 1, 3) \\
\hskip52pt + ((1 \blhd 4) \lhd 1, 4 \blhd 1, 4) 
- (1 \blhd 4, 1, 4 \blhd 1)\} \\
\hskip26pt + \{ - (1 \blhd 4, 1, 4) 
+ (1, 1, 2) \\
\hskip52pt + ((1 \blhd 4) \lhd 1, 4, 4 \lhd 1) 
- (1 \blhd 4, 4, 1)\} = 3[K_\sh]\).
\end{quote}
\end{example}

\begin{example}
\label{EX:knot 10-59}
Let \(L\) be a knot \(10_{59}\).
A homomorphism \(f\): \(Q(L) \to Q(K)\) defined by
\begin{quote}
\(1 \mapsto 1\), 
\(2 \mapsto 1\), 
\(3 \mapsto 4\), 
\(4 \mapsto 1\), 
\(5 \mapsto 1\), \\
\(6 \mapsto 3\), 
\(7 \mapsto 4 \blhd 1\), 
\(8 \mapsto 4\), 
\(9 \mapsto 3\), 
\(10 \mapsto 2\)
\end{quote}
is surjective. 
Figure \ref{FIG:knot 10-59} shows the induced shadow 
colouring of \(L\). 
In this case, we have \(f_\ast[L_\sh] = - 2 [K_\sh]\).
\begin{figure}[ht]
\begin{center}
\input{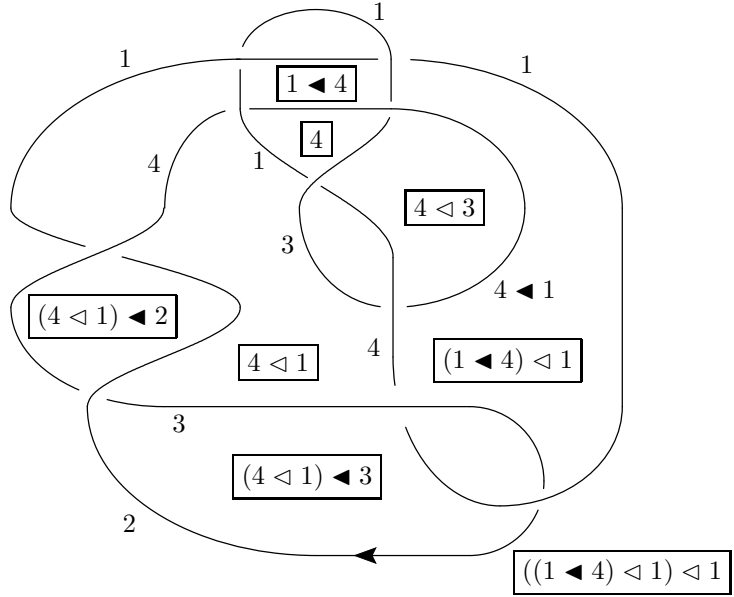}
\end{center}
\caption{Shadow coloured diagram of \(10_{59}\).}
\label{FIG:knot 10-59}
\end{figure}
\end{example}

\section*{Acknowledgement.}

The author would like to express his sincere gratitude
to Prof.~Toshitake Kohno for his invaluable suggestions.
Also he would like to thank Kokoro Tanaka and Ken Kanno 
for their helpful comments.

The author is financially supported by COE of the University of Tokyo.

\section*{References.}

\begin{itemize}
\item[{[BM]}] Burde, G., Murasugi, K.,
Links and Seifert Fiber Spaces,
Duke Math.~J.~\textbf{37} (1970), 89-93.

\item[{[CEGS]}] Carter, J. S., Elhamdadi, M., Gra\~na, M., Saito, M.,
Cocycle Knot Invariants from Quandle Modules 
and Generalized Quandle Cohomology,
Osaka J.~Math. \textbf{42} (2005), 499-541

\item[{[CENS]}] Carter, J. S., Elhamdadi, M., Nikiforou, M. A., Saito, M.,
Extensions of Quandles and Cocycle Knot Invariants,
J.~K.~T.~R.~\textbf{12} (2003), 725-738.

\item[{[CES]}] Carter, J. S., Elhamdadi, M., Saito, M.,
Twisted Quandle Homology Theory and Cocycle Knot Invariants,
Alg.~Geom.~Top.~\textbf{2} (2002), 95-135.

\item[{[CHNS]}] Carter, J. S., Harris, A., Nikiforou, M. A., Saito, M.,
Cocycle Knot Invariants, Quandle Extensions, and Alexander Matrices,
S\=urikaisekikenky\=usho K\=oky\=uroku \textbf{1272} (2002), 12-35.

\item[{[CJKLS]}] Carter, J. S., Jelsovsky, D., Kamada, S., 
Langford, L., Saito, M.,
Quandle Cohomology and State-sum Invariants of Knotted Curves and Surfaces,
Trans. Amer.~Math.~Soc.~\textbf{355} (2003), 3947-3989.

\item[{[CJKS1]}] Carter, J. S., Jelsovsky, D., Kamada, S., Saito, M.,
Quandle Homology Groups, their Betti Numbers, and Virtual Knots,
J.~Pure Appl.~Alg.~\textbf{57} (2001), 135-155.

\item[{[CJKS2]}] Carter, J. S., Jelsovsky, D., Kamada, S., Saito, M.,
Computations of Quandle Cocycle Invariants of Knotted Curves and Surfaces,
Adv.~Math.~\textbf{157} (2001), 36-94.

\item[{[CJKS3]}] Carter, J. S., Jelsovsky, D., Kamada, S., Saito, M.,
Shifting Homomorphisms in Quandle Cohomology 
and Skeins of Cocycle Knot Invariants,
J.~K.~T.~R. \textbf{10} (2001), 579-596.

\item[{[CKS1]}] Carter, J. S., Kamada, S., Saito, M.,
Geometric Interpretations of Quandle Homology,
J.~K.~T.~R.~\textbf{10} (2001), 345-386.

\item[{[CKS2]}] Carter, J. S., Kamada, S., Saito, M.,
Diagrammatic Computations for Quandles and Cocycle Knot Invariants,
Cont.~Math.~\textbf{318}, 51-74.

\item[{[E]}] Eisermann, M.,
Homological Characterization of the Unknot,
J.~Pure Appl. Alg. \textbf{177} (2003), 131-157.

\item[{[FR]}] Fenn, R., Rourke, C.,
Racks and Links in Codimension Two,
J.~K.~T.~R. \textbf{1} (1992), 343-406.

\item[{[FRS]}] Fenn, R., Rourke, C., Sanderson, B.,
Trunks and Classifying Spaces,
Appl. Cat.~Str.~\textbf{3} (1995), 321-356.

\item[{[G]}] Gra\~na, M.,
Quandle Knot Invariants are Quantum Knot Invariants,
J.~K.~T. R. \textbf{11} (2002), 673-681.

\item[{[J]}] Joyce, D.,
A Classifying Invariant of Knots, the Knot Quandle,
J.~Pure Appl. Alg.~\textbf{23} (1982), 37-65.

\item[{[K]}] Kamada, S.,
Knot Invariants derived from Quandles and Racks,
Geom.~Top. Mono.~\textbf{4} (2002), 103-117.

\item[{[KS]}] Kitano, T., Suzuki, M., 
A Partial Order in the Knot Table, 
Exp.~Math.~\textbf{14} (2005), 385-390.

\item[{[LN]}] Litherland, R. A., Nelson, S.,
The Betti Numbers of some Finite Racks,
J.~Pure Appl.~Alg.~\textbf{178} (2003), 187-202.

\item[{[M]}] Murasugi, K.,
On the Center of the Group of a Link,
Proc.~Amer.~Math.~Soc.~\textbf{16} (1965), 1052-1057.

\item[{[S]}] Satoh, S.,
Note on the Shadow Cocycle Invariant of a Knot with a Base Point,
J.~K.~T.~R. \textbf{16} (2007), 959-967.
\end{itemize}

\textsc{Graduate School of Mathematical Sciences,
The University of Tokyo,
3-8-1 Komaba, Meguro-ku, Tokyo 153-8914, Japan.}

\textsl{E-mail address}: \texttt{yasto@ms.u-tokyo.ac.jp}

\end{document}